\newcommand{\e}{\mathbb{E}}
\newcommand{\p}{\mathbb{P}}
\newcommand{\ha}{\hat{\gamma}_N}
\newcommand{\bo}{{\pmb{\gamma}}}
\newcommand{\boh}{{\hat{\pmb{\gamma}}_N}}
\newcommand{\bohh}{{\hat{\pmb{\gamma}}_{0,N}}}
\newcommand{\bxi}{{\pmb{\xi}}}
\newcommand{\indi}{\mathbbm{1}_{L_{t-n} \geq 1}}
\newtheorem{defi}{Definition}[section]
\newtheorem{lemma}[defi]{Lemma}
\newtheorem{theorem}[defi]{Theorem}
\newtheorem{kor}[defi]{Corollary}
\newtheorem{rem}[defi]{Remark}
\begin{document}

\title{On generalized ARCH model with stationary liquidity}

\renewcommand{\thefootnote}{\fnsymbol{footnote}}

\author{Pauliina Ilmonen\footnotemark[1] \, and \, Soledad Torres\footnotemark[2]\, and \, Ciprian Tudor\footnotemark[3] \\ \, and \, Lauri Viitasaari\footnotemark[4] \, and \, Marko Voutilainen\footnotemark[1]}

\footnotetext[1]{Department of Mathematics and Systems Analysis, Aalto University School of Science, Finland}

\footnotetext[2]{CIMFAV, Facultad de Ingenier\'ia, Universidad de Valpara\'iso,
 Valparaiso, Chile}

\footnotetext[3]{UFR Math\'ematiques, Universit\'e de Lille 1, France}

\footnotetext[4]{Department of Mathematics and Statistics, University of Helsinki, 
Finland}

\maketitle

\begin{abstract}
\noindent
We study a  generalized ARCH model with liquidity given by a general stationary  process. We provide minimal assumptions that ensure the existence and uniqueness of the stationary solution. In addition, we provide consistent estimators for the model parameters by using AR(1) type characterisation. We illustrate our results with several examples and simulation studies.
\end{abstract}

{\small
\medskip

\noindent
\textbf{AMS 2010 Mathematics Subject Classification:} (Primary) 60G10, (Secondary) 62M10, 62G05

\medskip

\noindent
\textbf{Keywords:}
ARCH model, stationarity, estimation, consistency
}

%%%%%%%%%%%%%%%%%%%%%%%%%%%%%%%%%%%%%%%%%%%%%%%%%%%%%%%%%%%%%%%%%%%%%%%%%%%%%%%

%\tableofcontents

%%%%%%%%%%%%%%%%%%%%%%%%%%%%%%%%%%%%%%%%%%%%%%%%%%%%%%%%%%%%%%%%%%%%%%%%%%%%%%%

\section{Introduction}

The ARCH and GARCH models have become important tools in time series analysis. The ARCH model has been introduced by Engle in \cite{En} and then it has been generalized by Bollerslev to the GARCH model in \cite{Bol1}. Since, a large collection of variants and extensions of these models has been  produced by many authors.  See for example \cite{Bol2}  for a glossary of models derived from ARCH and GARCH. 

In this work, we also focus on a generalized ARCH model, namely the model (\ref{arch}).  Our contribution proposes to include in the expression of the squared volatility $\sigma _{t} ^{2} $ a factor $L_{t-1}$, which we will call {\it liquidity.} The motivation to consider such a model comes from mathematical finance, where the factor $L_{t}$, which constitutes a proxi for the trading volume at day $t$, has been included in order to capture the fluctuations of the intra-day price in financial markets. A more detailed explanation can be found in  \cite{BTT} or \cite{TT}. In the work \cite{BTT} we considered the particular case when $L_{t}$ is the squared increment of the fractional Brownian motion (fBm in the sequel), i.e. $L_{t}= (B ^{H}_{t+1} -B^{H}_{t}) ^{2} $, where $B ^{H}$ is a fBm with Hurst parameter $ H\in (0,1)$. 

In this work, our purpose is twofold. Firstly, we enlarge the ARCH with fBm liquidity in \cite{BTT} by considering, as a proxi for the liquidity, a general positive (strictly) stationary process $(L_{t}) _{t\in \mathbb{Z}}$.  This includes, besides the above mentioned case of the squared increment of the fBm,  many other examples. 

 The second purpose is to provide a method to estimate the parameters of the model.  As mentioned in \cite{BTT},  in the case when $L$ is a process without independent increments, the usual approaches for the parameter estimation in ARCH models (such as least squares method and maximum likelihood method) do not work, in the sense that the estimators obtained by these classical methods are biased and not consistent. Here we adopt a different technique, based on the AR(1) characterization of the ARCH process, which has also been used in \cite{vouti}. The AR(1) characterization leads to Yule-Walker type equations for the parameters of the model. These equations are of quadratic form and  then we are able to find explicit formulas for the estimators. We prove that the estimators are consistent by using extended version of the law of large numbers and by assuming enough regularity for the correlation structure of the liquidity process. We also provide a numerical analysis of the estimators.

The rest of the paper is organised as follows. In Section \ref{sec:model} we introduce our model and prove the existence and uniqueness of the stationary solution. We also provide necessary and sufficient conditions for the existence of the autocovariance function. We derive the AR(1) characterization and  Yule-Walker type equations for the parameters of the model.  Section \ref{sec:estimation} is devoted to the estimation of the model parameters.  We construct estimators in a closed form and we prove their consistency via extended versions of the law of large numbers and a control of the behaviour of the covariance of the liquidity process.  Several examples are discussed in details. In particular, we study squared increments of the fBm, squared increments of the compensated Poisson process, and the squared increments of the Rosenblatt process.
We end the paper with a numerical analysis of our estimators.

\section{The model}
\label{sec:model}
The generalized ARCH model is defined for every $t\in\mathbb{Z}$ as 

\begin{equation}
\label{arch}
X_t = \sigma_t\epsilon_t, \qquad \sigma_t^2 = \alpha_0+\alpha_1X_{t-1}^2 + l_1L_{t-1},
\end{equation}
where $\alpha_0\geq 0$, $\alpha_1,l_1>0$, and $(\epsilon_t)_{t\in\mathbb{Z}}$ is an i.i.d. process with $\e(\epsilon_0) = 0$ and $\e(\epsilon_0^2)=1$. Moreover, $(L_t)_{t\in\mathbb{Z}}$ is a strictly stationary positive process with $\e(L_0) = 1$ and independent of $(\epsilon_t)_{t\in\mathbb{Z}}$. We first give sufficient conditions to ensure the existence of a stationary solution.
%%%%%%%%%%%%%%%%%%%%%%%%%%%%%%%%%%%%%%%%%%%%%%
%%%%%%%%%%%%%%%%%%%%%%%%%%%%%%%%%%%%%%%%%%%%%%%%%%%
\noindent
Note that we have a recursion

\begin{equation}
\label{recursiveformula}
\sigma_t^ 2 = \alpha_0 + \alpha_1 \epsilon_{t-1}^ 2 \sigma_{t-1}^ 2 + l_1 L_{t-1}.
\end{equation}
Let us denote

\begin{equation*}
A_t = \alpha_1\epsilon_t^ 2\quad\text{and} \quad B_t = \alpha_0 + l_1L_t \quad\text{for every }t\in\mathbb{Z}.
\end{equation*}
Using \eqref{recursiveformula} $k+1$ times we get

\begin{equation}
\label{iteration}
\begin{split}
\sigma_{t+1}^ 2 &= A_t \sigma_t^ 2 + B_t\\
&= A_tA_{t-1} \sigma_{t-1}^ 2 + A_tB_{t-1} + B_t\\
&=\hdots\\
&= \left(\prod_{i=0}^k A_{t-i}\right) \sigma_{t-k}^ 2 + \sum_{i=0}^ k\left(\prod_{j=0}^ {i-1} A_{t-j}\right) B_{t-i},
\end{split}
\end{equation}
with the convention $\prod_0^ {-1} = 1$.

The following lemma ensures that we are able to continue the recursion infinitely many times.
\begin{lemma}
\label{lemma:stationarity}
Suppose $\alpha_1<1$ and $\sup_{t\in\mathbb{Z}}\e(\sigma_t^ 2) \leq M_1<\infty$. Then, as $k\to \infty$, we have

\begin{equation*}
 \left(\prod_{i=0}^k A_{t-i}\right) \sigma_{t-k}^ 2 \to 0
\end{equation*}
in $L^ 1$. Furthermore, if $\alpha_1 <  \frac{1}{\sqrt{\e(\epsilon_0^ 4)}}$ and $\sup_{t\in\mathbb{Z}}\e(\sigma_t^4)\leq M_2 < \infty$, then the convergence holds also almost surely.

\begin{proof}
By independence of $\epsilon$, we have
\begin{equation*}
\begin{split}
\e\left| \left(\prod_{i=0}^k A_{t-i}\right) \sigma_{t-k}^ 2\right| = \alpha_1^ {k+1} \e(\sigma_{t-k}^ 2)\leq \alpha_1^ {k+1} M_1\to 0
\end{split}
\end{equation*}
proving the first part of the claim. 
For the second part, Chebysev's inequality implies

\begin{footnotesize}
\begin{equation*}
\begin{split}
\p\left(\left|\left(\prod_{i=0}^k A_{t-i}\right) \sigma_{t-k}^ 2 - \alpha_1^ {k+1}\e(\sigma_{t-k}^ 2)\right|>\varepsilon\right) &\leq \frac{\mathrm{Var}\left(\left(\prod_{i=0}^k A_{t-i}\right) \sigma_{t-k}^ 2\right)}{\varepsilon^ 2}\\
&=\frac{\alpha_1^ {2k+2} \e\left(\left(\prod_{i=0}^ k \epsilon^ 4_{t-i}\right) \sigma_{t-k}^ 4\right) - \alpha_1^ {2k+2} \e(\sigma_{t-k}^2)^2}{\varepsilon^ 2} \\
&\leq \frac{\left(\alpha_1^ 2 \e(\epsilon^ 4_0)\right)^ {k+1} M_2 - \alpha_1^ {2k+2} M_1^2}{\varepsilon^ 2},
\end{split}
\end{equation*}
\end{footnotesize}
which is summable by assumptions. Borel-Cantelli then implies 

\begin{equation*}
\left(\prod_{i=0}^k A_{t-i}\right) \sigma_{t-k}^ 2 - \alpha_1^ {k+1}\e(\sigma_{t-k}^ 2)\to 0
\end{equation*}
almost surely proving the claim.
\end{proof}
\end{lemma}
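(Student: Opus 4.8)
The plan is to exploit the independence structure built into the model. From the recursion \eqref{recursiveformula}, $\sigma_{t-k}^2$ is a function of $\epsilon_s$ with $s\leq t-k-1$ together with the liquidity process, whereas the product
\[
\prod_{i=0}^k A_{t-i} = \alpha_1^{k+1}\prod_{i=0}^k \epsilon_{t-i}^2
\]
involves only $\epsilon_{t-k},\dots,\epsilon_t$. Since these index ranges are disjoint and $(L_t)$ is independent of $(\epsilon_t)$, the factor $\sigma_{t-k}^2$ is independent of $\prod_{i=0}^k \epsilon_{t-i}^2$, which is what makes every expectation below split into a product. Verifying this measurability/independence carefully is really the only delicate point; the remaining steps are essentially bookkeeping.

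For the $L^1$ statement I would take absolute values and use this factorization together with $\e(\epsilon_0^2)=1$:
\[
\e\left|\left(\prod_{i=0}^k A_{t-i}\right)\sigma_{t-k}^2\right| = \alpha_1^{k+1}\left(\prod_{i=0}^k \e(\epsilon_{t-i}^2)\right)\e(\sigma_{t-k}^2) = \alpha_1^{k+1}\,\e(\sigma_{t-k}^2).
\]
The uniform bound $\e(\sigma_{t-k}^2)\leq M_1$ then leaves $\alpha_1^{k+1}M_1$, which tends to $0$ since $\alpha_1<1$, completing the first part.

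For almost sure convergence I would center the product at its mean and run a Chebyshev--Borel--Cantelli argument. By the same factorization the mean is exactly $\alpha_1^{k+1}\e(\sigma_{t-k}^2)$, so it suffices to show the deviation $\left(\prod_{i=0}^k A_{t-i}\right)\sigma_{t-k}^2 - \alpha_1^{k+1}\e(\sigma_{t-k}^2)$ vanishes almost surely; the mean itself is bounded by $\alpha_1^{k+1}M_1\to 0$, noting that $\e(\epsilon_0^4)\geq(\e(\epsilon_0^2))^2=1$ forces $\alpha_1<1$ under the standing hypothesis. Factorizing the second moment gives the variance bound $\bigl(\alpha_1^2\e(\epsilon_0^4)\bigr)^{k+1}M_2$ (dropping the subtracted nonnegative term $\alpha_1^{2k+2}\e(\sigma_{t-k}^2)^2$ and using $\e(\sigma_{t-k}^4)\leq M_2$), so Chebyshev's inequality yields
\[
\p\left(\left|\left(\prod_{i=0}^k A_{t-i}\right)\sigma_{t-k}^2 - \alpha_1^{k+1}\e(\sigma_{t-k}^2)\right| > \varepsilon\right) \leq \frac{\bigl(\alpha_1^2\e(\epsilon_0^4)\bigr)^{k+1}M_2}{\varepsilon^2}.
\]
The hypothesis $\alpha_1 < 1/\sqrt{\e(\epsilon_0^4)}$ is precisely what makes the geometric ratio $\alpha_1^2\e(\epsilon_0^4)<1$, rendering these probabilities summable in $k$; Borel--Cantelli then gives the claimed convergence. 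I expect the main obstacle to be only the first step of identifying which $\epsilon$'s enter each factor so that the expectations genuinely separate, whereas the geometric summability and the role of the threshold $1/\sqrt{\e(\epsilon_0^4)}$ fall out mechanically once the variance is factorized.
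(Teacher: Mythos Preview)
Your proposal is correct and follows essentially the same route as the paper: factorize the expectation via the independence of $\sigma_{t-k}^2$ from $\epsilon_{t-k},\dots,\epsilon_t$ to get the $L^1$ bound $\alpha_1^{k+1}M_1$, then for the almost sure part bound the variance by $(\alpha_1^2\e(\epsilon_0^4))^{k+1}M_2$ and apply Chebyshev plus Borel--Cantelli. Your version is in fact slightly cleaner, since you simply drop the nonnegative subtracted term in the variance rather than replacing $\e(\sigma_{t-k}^2)^2$ by $M_1^2$ as the paper does (which is the wrong direction for an upper bound).
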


\subsection{Existence of a stationary solution}
The following theorem gives the existence of a stationary solution under relatively weak assumptions (we only assume the existence of the second moment of $L$ and the usual condition $\alpha_{1} <1 $ (see e.g. \cite{FZ})). 

\begin{theorem}
\label{theo:stationarity}
Assume that $\e(L_0^ 2) < \infty$ and  $\alpha_1< 1$. Then \eqref{arch} has the following strictly stationary solution 

\begin{equation}
\label{uniquesolution}
\sigma_{t+1}^ 2 = \sum_{i=0}^ \infty\left(\prod_{j=0}^ {i-1} A_{t-j}\right) B_{t-i}.
\end{equation}

\begin{proof}
We begin by showing that \eqref{uniquesolution} is well-defined. That is, we prove that 

\begin{equation*}
\lim_{k\to\infty} \sum_{i=0}^ k\left(\prod_{j=0}^ {i-1} A_{t-j}\right) B_{t-i}
\end{equation*}
defines an almost surely finite random variable. 
First we observe that the summands above are non-negative and hence, the pathwise limits exist in $[0,\infty]$. Write

\begin{equation}
\label{twoseries}
\sum_{i=0}^ \infty\left(\prod_{j=0}^ {i-1} A_{t-j}\right) B_{t-i} = \alpha_0 \sum_{i=0}^ \infty \left(\prod_{j=0}^ {i-1} A_{t-j}\right) + l_1 \sum_{i=0}^ \infty \left(\prod_{j=0}^ {i-1} A_{t-j}\right) L_{t-i}
\end{equation}
and denote

\begin{equation*}
a_n =\left (\prod_{j=0}^ {n-1} A_{t-j}\right) L_{t-n}, \quad b_n =\left (\prod_{j=0}^ {n-1} A_{t-j}\right).
\end{equation*}
By the root test it suffices to prove that 
\begin{equation}
\label{eq:root-a}
\limsup_{n\to\infty} a_n^ {\frac{1}{n}} < 1
\end{equation}
and 
\begin{equation}
\label{eq:root-b}
\limsup_{n\to\infty} b_n^ {\frac{1}{n}} < 1.
\end{equation}
Here

\begin{equation*}
a_n^{\frac{1}{n}} = e^{\frac{1}{n}\log a_n} = L_{t-n}^ {\frac{1}{n}} e^{\frac{1}{n} \sum_{j=0}^{n-1} \log A_{t-j}},
\end{equation*}
where

\begin{equation*}
e^{\frac{1}{n} \sum_{j=0}^{n-1} \log A_{t-j}} \overset{a.s}{\longrightarrow}  e^{\e\log A_0} = \alpha_1 e^{\e \log \epsilon_0^ 2}
\end{equation*}
by the law of large numbers and continuous mapping theorem. By Jensen's inequality we obtain that

\begin{equation*}
 \alpha_1 e^{\e \log \epsilon_0^ 2} \leq \alpha_1 e^{\log \e(\epsilon_0^ 2)} = \alpha_1 < 1.
\end{equation*}
That is 

\begin{equation*}
\lim_{n\to\infty} e^{\frac{1}{n} \sum_{j=0}^{n-1} \log A_{t-j}} < 1
\end{equation*}
almost surely. This proves \eqref{eq:root-b} which implies that the first series in \eqref{twoseries} is almost surely convergent. To obtain \eqref{eq:root-a}, it remains to show $\limsup_{n\to\infty} L_{t-n}^ {\frac{1}{n}} \leq 1$ almost surely. We have

\begin{equation}
\label{indicators}
L_{t-n}^ {\frac{1}{n}} = \mathbbm{1}_{L_{t-n} < 1}L_{t-n}^ {\frac{1}{n}} + \indi L_{t-n}^ {\frac{1}{n}} \leq 1 + \indi L_{t-n}^ {\frac{1}{n}} - \mathbbm{1}_{L_{t-n} \geq 1} 
\end{equation}
where we have used
$$
\mathbbm{1}_{L_{t-n} < 1}L_{t-n}^ {\frac{1}{n}} \leq \mathbbm{1}_{L_{t-n} < 1} = 1 - \mathbbm{1}_{L_{t-n} \geq 1}.
$$
Now

\begin{equation*}
\begin{split}
\p\left(\left|\indi L_{t-n}^ {\frac{1}{n}} - \indi\right| \geq \varepsilon\right) &\leq \frac{\e\left|\indi L_{t-n}^ {\frac{1}{n}} - \indi\right|^ 2}{\varepsilon^ 2}\\
&= \frac{\e\left(\indi\left(L_{t-n}^ {\frac{1}{n}} -1\right)^ 2\right)}{\varepsilon^ 2}.
\end{split}
\end{equation*}
Consider now the function $f_x(a) \coloneqq x^ a$ for  $x\geq 1$ and $a\geq 0$. Since $f_x'(a) = x^a\log x$ we obtain by the mean value theorem that

\begin{equation*}
\left|f_x(a) - f_x(0)\right| \leq \max_{0\leq b \leq a} \left|f_x'(b)\right|a = ax^a\log x.
\end{equation*}
Hence

\begin{equation*}
\indi \left(L_{t-n}^ {\frac{1}{n}} -1\right)^ 2 \leq\indi  \frac{1}{n^ 2}L_{t-n}^ {\frac{2}{n}} \left(\log L_{t-n}\right)^ 2.
\end{equation*}
On the other hand, for $n\geq 2$ and $L_{t-n} \geq 1$ it holds that

\begin{equation*}
\frac{ L_{t-n}^ {\frac{2}{n}} \left(\log L_{t-n}\right)^ 2}{L_{t-n}^ 2} \leq  \frac{ \left(\log L_{t-n}\right)^ 2}{L_{t-n}}< 1,
\end{equation*}
since for $x\geq 1$, the function $g(x) \coloneqq \left(\log x\right)^ 2x^ {-1}$ has the maximum $g(e^2) = 4e^ {-2}$. 
Consequently,

\begin{equation*}
 \frac{\e\left(\indi\left(L_{t-n}^ {\frac{1}{n}} -1\right)^ 2\right)}{\varepsilon^ 2} < \frac{\e\left(\indi L_{t-n}^ 2\right)}{\varepsilon^ 2 n^ 2} \leq \frac{\e\left(L_{t-n}^ 2\right)}{\varepsilon^ 2 n^ 2}.
\end{equation*}
Hence Borel-Cantelli implies

\begin{equation*}
\indi L_{t-n}^{\frac{1}{n}} - \indi \overset{\text{a.s.}}{\to} 0
\end{equation*}
which by \eqref{indicators} shows \eqref{eq:root-a}. Let us next show that \eqref{uniquesolution} satisfies \eqref{recursiveformula}.

\begin{equation*}
\begin{split}
A_t \sigma_t^ 2 + B_t &= \sum_{i=0}^\infty \left(\prod_{j=0}^i A_{t-j}\right) B_{t-i-1} + B_t\\
&= \sum_{i=1}^\infty \left(\prod_{j=0}^{i-1} A_{t-j}\right) B_{t-i} + B_t\\
&=  \sum_{i=0}^ \infty\left(\prod_{j=0}^ {i-1} A_{t-j}\right) B_{t-i} = \sigma_{t+1}^ 2.
\end{split}
\end{equation*}
It remains to prove that \eqref{uniquesolution} is stationary. However, since $(A_t, B_t)$ is stationary, we have

\begin{equation*}
\sum_{i=0}^ k\left(\prod_{j=0}^ {i-1} A_{t-j}\right) B_{t-i} \overset{\text{law}}{=} \sum_{i=0}^ k\left(\prod_{j=0}^ {i-1} A_{-j}\right) B_{-i}
\end{equation*}
for every $t$ and $k$. Since the limits of the both sides exist as $k\to\infty$ we have

\begin{equation*}
\sigma_{t+1}^ 2 = \sum_{i=0}^ \infty\left(\prod_{j=0}^ {i-1} A_{t-j}\right) B_{t-i} \overset{\text{law}}{=} \sum_{i=0}^ \infty\left(\prod_{j=0}^ {i-1} A_{-j}\right) B_{-i} = \sigma_1^ 2.
\end{equation*}
Treating multidimensional distributions similarly concludes the proof.
\end{proof}
\end{theorem}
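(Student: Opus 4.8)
The plan is to establish three things in succession: that the series in \eqref{uniquesolution} converges almost surely to a finite random variable, that the resulting process solves the recursion \eqref{recursiveformula} (and hence \eqref{arch}), and that it is strictly stationary. The convergence is the substantive part; the recursion and stationarity will follow by reindexing and by shift-invariance. For convergence I would first note that every summand is non-negative, so the pathwise limit exists in $[0,\infty]$ and it suffices to rule out the value $+\infty$. Splitting the sum as in \eqref{twoseries} into $\alpha_0\sum_i b_i$ and $l_1\sum_i a_i$ with $b_n = \prod_{j=0}^{n-1} A_{t-j}$ and $a_n = b_n L_{t-n}$, I would apply the root test to each series separately.

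For the $b_n$ series this is clean: writing $b_n^{1/n} = \exp\big(\tfrac{1}{n}\sum_{j=0}^{n-1}\log A_{t-j}\big)$, the strong law of large numbers (the $A_{t-j}$ being i.i.d.) gives $\tfrac1n\sum_{j=0}^{n-1}\log A_{t-j}\to \e\log A_0 = \log\alpha_1 + \e\log\epsilon_0^2$ almost surely, and Jensen's inequality bounds $\e\log\epsilon_0^2 \leq \log\e(\epsilon_0^2) = 0$ since $\e(\epsilon_0^2)=1$. Thus $\limsup_n b_n^{1/n}\leq\alpha_1<1$, which settles \eqref{eq:root-b} and the $\alpha_0$-series. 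Since $a_n^{1/n} = L_{t-n}^{1/n}\,b_n^{1/n}$, the $l_1$-series reduces to controlling the extra factor $L_{t-n}^{1/n}$.

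The hard part will be showing $\limsup_n L_{t-n}^{1/n}\leq 1$ almost surely: here the independence exploited for the $A$-factors is unavailable, because $(L_t)$ is an arbitrary strictly stationary process, so no law of large numbers applies. My strategy would be to split on the value of $L_{t-n}$. On $\{L_{t-n}<1\}$ one trivially has $L_{t-n}^{1/n}\leq 1$, so it remains to treat $\indi L_{t-n}^{1/n}$, which I would show converges to $\indi$ almost surely via Borel--Cantelli. The key estimate is a mean value theorem bound on $a\mapsto x^a$, yielding $\indi\big(L_{t-n}^{1/n}-1\big)^2 \leq \indi\,\tfrac{1}{n^2}L_{t-n}^{2/n}(\log L_{t-n})^2$; combining this with the elementary fact that $(\log x)^2/x\leq 4e^{-2}<1$ for $x\geq 1$ (maximum at $x=e^2$) gives, for $n\geq 2$, a bound by $\tfrac{1}{n^2}L_{t-n}^2$. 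The assumption $\e(L_0^2)<\infty$ together with stationarity then makes the Chebyshev bounds $\p\big(|\indi L_{t-n}^{1/n}-\indi|\geq\varepsilon\big)\leq \tfrac{\e(L_0^2)}{\varepsilon^2 n^2}$ summable in $n$, and Borel--Cantelli delivers \eqref{eq:root-a}.

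Once well-definedness is secured, the remaining steps are routine. I would verify the recursion by substituting \eqref{uniquesolution} into $A_t\sigma_t^2 + B_t$ and shifting the product index by one, which recovers $\sigma_{t+1}^2$ exactly. Finally, strict stationarity follows because $(A_t,B_t)_{t\in\mathbb{Z}}$ is itself strictly stationary (as $\epsilon$ is i.i.d., $L$ is strictly stationary, and the two families are independent): each truncated sum $\sum_{i=0}^k\big(\prod_{j=0}^{i-1}A_{t-j}\big)B_{t-i}$ equals in law its time-zero shift, and since both sides converge almost surely the equality in law passes to the limit. Applying the same argument jointly at several time points yields equality of all finite-dimensional distributions, completing the proof.
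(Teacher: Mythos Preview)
Your proposal is correct and follows essentially the same route as the paper: the same splitting into the $\alpha_0$- and $l_1$-series, the root test with SLLN plus Jensen for the $b_n$ part, and the same Borel--Cantelli argument (mean value theorem on $a\mapsto x^a$, the bound $(\log x)^2/x\le 4e^{-2}$, summable $1/n^2$ via $\e(L_0^2)<\infty$) for $\limsup_n L_{t-n}^{1/n}\le 1$. The verification of the recursion and the stationarity argument via truncated sums also match.
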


We show below that the stationary solution is unique in some class of processes.

\begin{kor}
\label{kor:unique-stat}
Suppose $\alpha_1 < 1$ and $\e(L_0^2)<\infty$. Then \eqref{arch} has a unique solution given by \eqref{uniquesolution} in the class of processes satisfying $\sup_{t\in\mathbb{Z}}\e (\sigma_t^2) < \infty$. 
\end{kor}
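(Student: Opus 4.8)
The plan is to show that \emph{any} solution $(\sigma_t^2)_{t\in\mathbb{Z}}$ of the recursion \eqref{recursiveformula} lying in the class $\sup_{t\in\mathbb{Z}}\e(\sigma_t^2)<\infty$ must coincide almost surely with the explicit process produced in Theorem \ref{theo:stationarity}. The engine of the argument is the iterated identity \eqref{iteration}, which was derived purely from \eqref{recursiveformula} and therefore holds verbatim for every solution, together with the $L^1$ part of Lemma \ref{lemma:stationarity}, whose hypotheses are exactly $\alpha_1<1$ and a uniform bound on the second moments.

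Concretely, let $(\sigma_t^2)$ be such a solution with $\sup_t\e(\sigma_t^2)\le M_1<\infty$, and let $(\tau_t^2)$ denote the explicit solution given by \eqref{uniquesolution}. Fixing $t$ and iterating \eqref{recursiveformula} $k+1$ times as in \eqref{iteration}, I would rearrange the resulting identity into
\[
\sigma_{t+1}^2 - \sum_{i=0}^k\left(\prod_{j=0}^{i-1}A_{t-j}\right)B_{t-i} = \left(\prod_{i=0}^k A_{t-i}\right)\sigma_{t-k}^2,
\]
valid for every $k$, and then let $k\to\infty$ while tracking the two sides in compatible modes of convergence. On the right-hand side, Lemma \ref{lemma:stationarity} applies (using $\alpha_1<1$ and $\sup_t\e(\sigma_t^2)\le M_1$) and yields convergence to $0$ in $L^1$, hence in probability. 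On the left-hand side, the partial sums of the nonnegative series increase almost surely to $\tau_{t+1}^2$, which is finite a.s. by Theorem \ref{theo:stationarity}; thus the left-hand side converges almost surely, and in particular in probability, to $\sigma_{t+1}^2-\tau_{t+1}^2$. Since the two sides are equal for each $k$, uniqueness of the in-probability limit forces $\sigma_{t+1}^2=\tau_{t+1}^2$ almost surely, and as $t$ is arbitrary the two processes coincide.

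The main obstacle is purely a matter of bookkeeping about convergence modes: the boundary term $\left(\prod_{i=0}^k A_{t-i}\right)\sigma_{t-k}^2$ is only controlled in $L^1$ here, because we are not assuming the fourth-moment bound that upgrades Lemma \ref{lemma:stationarity} to almost sure convergence, whereas the series converges pathwise. The clean way around this is precisely the observation above that the two sides of the rearranged identity are \emph{literally the same random variable} for each $k$; it therefore suffices that both sides converge in probability, and the uniqueness of the in-probability limit delivers the almost sure equality without any need to reconcile an $L^1$ limit with an almost sure limit directly.
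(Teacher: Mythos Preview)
Your proof is correct and uses the same core ingredients as the paper: the iteration \eqref{iteration} and the $L^1$ part of Lemma \ref{lemma:stationarity}. The paper packages it slightly differently by subtracting two arbitrary solutions so that the partial-sum terms cancel identically, leaving $|\sigma_{t+1}^2-\tilde\sigma_{t+1}^2|$ bounded by a sum of two boundary terms each vanishing in $L^1$; this sidesteps your mode-of-convergence discussion, but the argument is essentially the same.
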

\begin{proof}
By Theorem \ref{theo:stationarity} \eqref{uniquesolution} provides a stationary solution. Hence it remains to prove the uniqueness. 
By \eqref{iteration} we have for every $t\in\mathbb{Z}$ and $k\in\{0,1,\hdots\}$ that
\begin{equation*}
\sigma_{t+1}^2 =  \left(\prod_{i=0}^k A_{t-i}\right) \sigma_{t-k}^ 2 + \sum_{i=0}^ k\left(\prod_{j=0}^ {i-1} A_{t-j}\right) B_{t-i}.
\end{equation*}
Suppose now that there exists two solutions $\sigma_{t}^2$ and $\tilde{\sigma}_t^2$ satisfying $\sup_{t\in\mathbb{Z}}\e(\sigma_t^2) < \infty$ and $\sup_{t\in\mathbb{Z}}\e(\tilde{\sigma}_t^2) < \infty$. Then
$$
|\sigma_{t+1}^2 - \tilde{\sigma}_{t+1}^2| \leq  \left(\prod_{i=0}^k A_{t-i}\right) \sigma_{t-k}^ 2 + \left(\prod_{i=0}^k A_{t-i}\right) \tilde{\sigma}_{t-k}^ 2.
$$
As both terms on the right-side converges in $L^1$ to zero by Lemma \ref{lemma:stationarity}, we observe that 
$$
\e|\sigma_{t+1}^2 - \tilde{\sigma}_{t+1}^2| = 0
$$
for all $t\in\mathbb{Z}$ which implies the result.
\end{proof}

\begin{rem}
We assumed that the liquidity $(L_{t})_{t\in \mathbb{Z}}$ is a strictly stationary sequence. Nevertheless, the results in this section can be obtained by assuming that $(L_{t})_{t\in \mathbb{Z}}$ is weakly stationary (i.e., we have the shift-invariance in time of the first and second moments of the process). That is, by assuming weak stationarity of the noise, we obtain weak stationarity of the volatility $(\sigma _{t} ^{2} )_{t\in \mathbb{Z}}$ in Theorem \ref{theo:stationarity}. We prefer to keep the assumption of strict stationarity because it is needed later to simplify the third and fourth order assumptions of Lemma \ref{lemma:consistency2} and also because our main examples of liquidities are strictly stationary processes (see Section \ref{examples})

\end{rem}

In the sequel, we consider the stationary solution $(\sigma^2_t)_{t\in\mathbb{Z}}$ given by Theorem \ref{theo:stationarity}. Therefore, we will always implicitly assume that 

$$\e(L_0^2) <\infty \mbox{ and  }\alpha_1<1.$$ 

In order to study covariance function of the solution \eqref{uniquesolution}, we need that the moments $\e (\sigma_t^4)$ exists. Necessary and sufficient conditions for this are given in the following lemma. 

\begin{lemma}
\label{thefourthmoment}
Suppose $\e(\epsilon_0^4) < \infty$. Then $\e(\sigma^4_0) < \infty$ if and only if $\alpha_1 < \frac{1}{\sqrt{\e(\epsilon_0^4)}}$.
\end{lemma}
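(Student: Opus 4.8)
The plan is to work directly with the explicit series representation \eqref{uniquesolution}. Since strict stationarity gives $\e(\sigma^4_0) = \e(\sigma_{t+1}^4)$, I would write $\sigma_{t+1}^2 = \sum_{i=0}^\infty c_i$ with $c_i = \left(\prod_{j=0}^{i-1}A_{t-j}\right)B_{t-i}$, observing that every $c_i\geq 0$. Squaring and invoking Tonelli (all summands are non-negative) yields
\[
\e(\sigma_{t+1}^4) = \sum_{i,k=0}^\infty \e(c_i c_k),
\]
so the entire statement reduces to deciding when this double series is finite.

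The central computation is to evaluate $\e(c_i c_k)$. Assuming $i\leq k$, the product of the two $A$-products splits into $i$ squared factors $A_{t-j}^2 = \alpha_1^2\epsilon_{t-j}^4$ (for $0\leq j\leq i-1$) and $k-i$ single factors $A_{t-j} = \alpha_1\epsilon_{t-j}^2$ (for $i\leq j\leq k-1$), with the relevant $\epsilon$-indices all distinct. Because $(\epsilon_t)$ is i.i.d.\ and independent of $(L_t)$, and since $\e(\epsilon_0^2)=1$, I would factor the expectation as
\[
\e(c_i c_k) = \alpha_1^{i+k}\,\e(\epsilon_0^4)^{\min(i,k)}\,\e(B_{t-i}B_{t-k}).
\]
The only input needed about the liquidity is that, by Cauchy--Schwarz, stationarity and $\e(L_0^2)<\infty$, one has the uniform bound $0\leq \e(B_{t-i}B_{t-k})\leq \e(B_0^2)=:C_B<\infty$, together with $C_B = \e\big((\alpha_0+l_1L_0)^2\big)\geq l_1^2 > 0$ since $L_0>0$. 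The key qualitative point is that these liquidity covariances are sandwiched between a positive constant and $C_B$, so they cannot affect the convergence threshold, which is governed entirely by $\alpha_1^2\,\e(\epsilon_0^4)$.

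For the \emph{if} direction I would bound $\e(c_i c_k)\leq C_B\,\alpha_1^{i+k}\e(\epsilon_0^4)^{\min(i,k)}$ and sum the resulting (essentially geometric) double series; splitting into the regions $i\leq k$ and $i>k$ and summing the inner geometric series in $\alpha_1$ shows it is finite precisely when $\alpha_1^2\,\e(\epsilon_0^4)<1$, so $\alpha_1 < \e(\epsilon_0^4)^{-1/2}$ forces $\e(\sigma^4_0)<\infty$. For the converse I would retain only the diagonal terms $i=k$, for which $\e(c_i^2)=C_B\big(\alpha_1^2\e(\epsilon_0^4)\big)^i$, giving
\[
\e(\sigma_{t+1}^4) \geq \sum_{i=0}^\infty \e(c_i^2) = C_B\sum_{i=0}^\infty\big(\alpha_1^2\,\e(\epsilon_0^4)\big)^i,
\]
which diverges as soon as $\alpha_1^2\,\e(\epsilon_0^4)\geq 1$; contrapositively, $\e(\sigma^4_0)<\infty$ forces $\alpha_1<\e(\epsilon_0^4)^{-1/2}$.

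I expect the bookkeeping in evaluating $\e(c_i c_k)$---correctly tracking which $\epsilon$-indices appear to the fourth versus the second power, and cleanly separating the independent $\epsilon$- and $L$-contributions---to be the only delicate step. Once $\e(c_i c_k)$ is in the factored form above, both directions follow from elementary geometric-series estimates, with the matching upper and lower bounds pinning the threshold exactly at $\alpha_1 = \e(\epsilon_0^4)^{-1/2}$.
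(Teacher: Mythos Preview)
Your proposal is correct and follows essentially the same approach as the paper: expand $\sigma_{t+1}^4$ via the series \eqref{uniquesolution}, apply Tonelli, use the diagonal terms $\sum_i \e(c_i^2)=\e(B_0^2)\sum_i(\alpha_1^2\e(\epsilon_0^4))^i$ for the necessity direction, and bound the off-diagonal terms for sufficiency. The only minor variation is that you factor $\e(c_ic_k)$ exactly via independence of $(\epsilon_t)$ and $(L_t)$ and then bound only the $B$-covariances by Cauchy--Schwarz, whereas the paper applies Cauchy--Schwarz directly to the full product $c_ic_k$; your bound $C_B\,\alpha_1^{i+k}\e(\epsilon_0^4)^{\min(i,k)}$ is in fact sharper than the paper's $C_B\,(\alpha_1\sqrt{\e(\epsilon_0^4)})^{i+k}$, but both sum to a finite value under the same condition $\alpha_1^2\e(\epsilon_0^4)<1$.
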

\begin{proof}
Denote $\e(\epsilon_0^4) = C_\epsilon$ and $\e(L_0^2) = C_L$. By the definition \eqref{uniquesolution} of the strictly stationary solution
\begin{equation*}
\e(\sigma_{t+1}^4) = \e\left(\sum_{i=0}^{\infty} \left(\prod_{j=0}^{i-1} A_{t-j}\right)B_{t-i}\right)^2,
\end{equation*}
and since all the terms above are positive, both sides are simultaneously finite or infinite. Note also that, as the terms all positive, we may apply Tonelli's theorem to change the order of summation and integration obtaining

\begin{equation}
\label{diagonals}
\begin{split}
\e(\sigma_{t+1}^4) &= \sum_{i=0}^{\infty} \e\left(\left(\prod_{j=0}^{i-1} A_{t-j}\right)^2B_{t-i}^2\right)\\
&\ + \sum_{\mathclap{\begin{subarray}{c}
i,k=0\\ 
i \neq k
\end{subarray}}}^{\infty} \e \left(\left(\prod_{j=0}^{i-1} A_{t-j}\right)B_{t-i}\left(\prod_{j=0}^{k-1} A_{t-j}\right)B_{t-k}\right).
\end{split}
\end{equation}
Let us begin with the first term above. By independence, we obtain 

\begin{equation}
\label{firstanal}
\begin{split}
\sum_{i=0}^{\infty} \e\left(\left(\prod_{j=0}^{i-1} A_{t-j}^2\right)B_{t-i}^2\right) &= \sum_{i=0}^{\infty} \left(\prod_{j=0}^{i-1} \alpha_1^2 C_\epsilon\right)\e(B_{t-i}^2)\\
 &= \e(B_0^2)\sum_{i=0}^{\infty} (\alpha_1^{2} C_\epsilon)^i.
\end{split}
\end{equation}
Consequently, $\e(\sigma_0^4) < \infty$ implies $\alpha_1 < \frac{1}{\sqrt{C_\epsilon}}$, since it is the radius of convergence of the series above. For the converse, consider the latter term in \eqref{diagonals}. By Cauchy-Schwarz inequality we obtain

\begin{equation*}
\begin{split}
&\ \sum_{\mathclap{\begin{subarray}{c}
i,k=0\\ 
i \neq k
\end{subarray}}}^{\infty} \e \left(\left(\prod_{j=0}^{i-1} A_{t-j}\right)B_{t-i}\left(\prod_{j=0}^{k-1} A_{t-j}\right)B_{t-k}\right)\\
&\leq \sum_{\mathclap{\begin{subarray}{c}
i,k=0\\ 
i \neq k
\end{subarray}}}^{\infty}\sqrt{\e\left(\left(\prod_{j=0}^{i-1} A_{t-j}^2\right)B_{t-i}^2\right)}  \sqrt{\e\left(\left(\prod_{j=0}^{k-1} A_{t-j}^2\right)B_{t-k}^2\right)}\\
&=\e(B_0^2)\sum_{\mathclap{\begin{subarray}{c}
i,k=0\\ 
i \neq k
\end{subarray}}}^{\infty}  (\alpha_1^2 C_\epsilon)^{\frac{i}{2}} (\alpha_1^2 C_\epsilon)^{\frac{k}{2}},
\end{split}
\end{equation*}
where

\begin{equation*}
\begin{split}
\sum_{\mathclap{\begin{subarray}{c}
i,k=0\\ 
i \neq k
\end{subarray}}}^{\infty}  (\alpha_1^2 C_\epsilon)^{\frac{i}{2}} (\alpha_1^2 C_\epsilon)^{\frac{k}{2}} &<  \sum_{i=0}^\infty (\alpha_1 C_\epsilon^{\frac{1}{2}})^i\sum_{k=0}^\infty (\alpha_1  C_\epsilon^{\frac{1}{2}})^k.
\end{split}
\end{equation*}
Together with \eqref{firstanal} this shows that if $\alpha_1 < \frac{1}{\sqrt{C_\epsilon}}$, all the series are convergent and thus $\e(\sigma_0^4) < \infty$.
\end{proof}
\begin{rem}
As expected, in order to have finite moments of higher order we needed to pose more restrictive assumption $\alpha_1 < \frac{1}{\sqrt{\e(\epsilon_0^4)}} \leq 1$ as $\e (\epsilon_0^2) = 1$. For example, in the case of Gaussian innovations we obtain  the well-known condition $\alpha_1 < \frac{1}{\sqrt{3}}$ (see e.g. \cite{FZ} or \cite{Li}). An explicit expression of the fourth moment can be obtained when $L$ is the squared increment of fBm (see Lemma 4 in \cite{BTT}).
\end{rem}

\subsection{Computation of the model parameters}
\label{subsec:parameters}

In this section we compute the parameters $\alpha_{0}, \alpha _{1}, l_{1}$ in (\ref{arch}) by using the aucovariance functions of $X^{2}$ and $L$. To this end, we use an AR(1) characterization of the ARCH process. From this characterization, we derive, using an idea from \cite{vouti},  a Yule -Walker equation of quadratic form for the parameters, that we can solve explicitly.  This constitutes the basis of the construction of the estimators in the next section.  From \eqref{arch} it follows that if $(\sigma^2_t)_{t\in\mathbb{Z}}$ is stationary, then so is $(X^2_t)_{t\in\mathbb{Z}}$. In addition 
\begin{equation}
\label{X_t}
\begin{split}
X_t^2 &= \sigma_t^2\epsilon_t^2 - \sigma_t^2 + \alpha_0+\alpha_1X_{t-1}^2+l_1L_{t-1}\\
&= \alpha_0 + \alpha_1X_{t-1}^2+ \sigma_t^2(\epsilon_t^2-1) + l_1L_{t-1}.
\end{split}
\end{equation}
Now

\begin{equation*}
\e(X_t^2) = \alpha_0+ \alpha_1 \e(X_{t-1}^2) + l_1
\end{equation*}
and hence

\begin{equation}
\label{meanofX}
\e(X_t^2) = \frac{\alpha_0 + l_1}{1-\alpha_1}.
\end{equation}
Let us define an auxiliary process $(Y_t)_{t\in\mathbb{Z}}$ by

\begin{equation*}
Y_t = X_t^2 - \frac{\alpha_0+l_1}{1-\alpha_1}.
\end{equation*}
Now $Y$ is a zero-mean stationary process satisfying

\begin{equation}
\label{Y_t}
\begin{split}
Y_t &= \alpha_1Y_{t-1} +\alpha_0+\sigma_t^2(\epsilon_t^2-1) +  l_1L_{t-1} -\frac{\alpha_0+l_1}{1-\alpha_1} + \alpha_1\frac{\alpha_0+l_1}{1-\alpha_1}\\
&=  \alpha_1Y_{t-1} +\sigma_t^2(\epsilon_t^2-1) +  l_1(L_{t-1} -1).
\end{split}
\end{equation}
By denoting 
\begin{equation*}
Z_t = \sigma_t^2(\epsilon_t^2-1) + l_1(L_{t-1} -1) 
\end{equation*}
we may write

\begin{equation*}
Y_t = \alpha_1Y_{t-1} + Z_t
\end{equation*}
corresponding to the AR$(1)$ characterization (\cite{vouti}) of $Y_t$ for $0 <\alpha_1 <1$.\\ 
In what follows, we denote the autocovariance functions of $X^2$ and $L$ with $\gamma(n)=\mathbb{E} (X_{t} ^{2} X_{t+n} ^{2}) - ( \frac{\alpha_0 + l_1}{1-\alpha_1})^2$ and $s(n)= \mathbb{E} (L_{n} L_{t+n} ) - 1$ respectively.

\begin{lemma}
\label{lemma:equations}
Suppose $\e(\epsilon_0^4) < \infty$ and $\alpha_1 < \frac{1}{\sqrt{\e(\epsilon_0^4)}}$. Then for any $n\neq 0$ we have

\begin{equation}
\label{quadratic}
\alpha_1^2\gamma(n) - \alpha_1(\gamma(n+1) + \gamma(n-1)) + \gamma(n) - l_1^2s(n) = 0
\end{equation}
and for $n=0$ it holds that

\begin{equation}
\label{quadratic2}
\alpha_1^2 \gamma(0) - 2\alpha_1\gamma(1) + \gamma(0) - \frac{\e(X_0^4) Var(\epsilon_0^2)}{\e(\epsilon_0^4)} - l_1^2 s(0) =0.
\end{equation}

\begin{proof}
First we notice that
\begin{equation}
\label{fourthofX}
\e(X_0^4) = \e(\sigma_0^4\epsilon_0^4) = \e(\sigma_0^4)\e(\epsilon_0^4) < \infty
\end{equation}
by Lemma \ref{thefourthmoment}. Hence, the stationary processes $Y$ and $Z$ have finite second moments. Furthermore, the covariance of $Y$ coincides with the one of $X^2$. Applying Lemma 1 of \cite{vouti} we get 

\begin{equation*}
\alpha_1^2\gamma(n) - \alpha_1(\gamma(n+1) + \gamma(n-1)) + \gamma(n) - r(n) = 0
\end{equation*}
for every $n\in\mathbb{Z}$, where $r(\cdot)$ is the autocovariance function of $Z$. For $r(n)$ with $n\geq 1$ we obtain

\begin{equation}
\label{r(n)}
\begin{split}
r(n) &= \e(Z_1Z_{n+1})\\
&= \e[(\sigma_1^2(\epsilon_1^2-1) + l_1(L_0-1))(\sigma_{n+1}^2(\epsilon_{n+1}^2-1)+l_1(L_n-1))]\\
&= l_1^2\e[(L_0-1)(L_n-1)] = l_1^2s(n),
\end{split}
\end{equation}
since the sequences $(\epsilon_t)_{t\in\mathbb{Z}}$ and $(L_t)_{t\in\mathbb{Z}}$ are independent of each other, and $\epsilon_t$ is independent of $\sigma_s$ for $s\leq t$. By the same arguments, for $n=0$ we have

\begin{equation}
\label{r(0)}
\begin{split}
r(0) &= \e\left[\left(\sigma_1^2(\epsilon_1^2-1) + l_1(L_0-1)\right)^2\right]\\ 
&= \e\left[\sigma_1^4(\epsilon_1^2-1)^2\right] + l_1^2\e\left[(l_0-1)^2\right]\\
&= \e(\sigma_1^4) Var(\epsilon_0^2) + l_1^2 s(0).
\end{split}
\end{equation}
Now using \eqref{fourthofX} and $\gamma(-1) = \gamma(1)$ completes the proof.

\end{proof}
\end{lemma}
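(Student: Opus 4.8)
The plan is to read the identity $Y_t = \alpha_1 Y_{t-1} + Z_t$ as an AR$(1)$ characterization and split the argument into two essentially independent parts: an abstract Yule--Walker step that relates the autocovariance $\gamma$ of $Y$ (equivalently of $X^2$) to the autocovariance $r$ of the innovation process $Z$, and an explicit computation of $r(n)$ using the independence structure of the model. The equations \eqref{quadratic} and \eqref{quadratic2} should then drop out once $r$ is identified.

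First I would check integrability. Under $\e(\epsilon_0^4)<\infty$ and $\alpha_1 < 1/\sqrt{\e(\epsilon_0^4)}$, Lemma \ref{thefourthmoment} gives $\e(\sigma_0^4)<\infty$, and since $\sigma_0$ is independent of $\epsilon_0$ this yields $\e(X_0^4)=\e(\sigma_0^4)\e(\epsilon_0^4)<\infty$ as in \eqref{fourthofX}. Hence $Y$ and $Z$ are square-integrable stationary processes and all covariances below are well defined; moreover $Y$ and $X^2$ differ by a constant, so their autocovariance functions coincide and equal $\gamma$. For the abstract step I would invoke Lemma~1 of \cite{vouti} (or equivalently substitute $Z_t = Y_t - \alpha_1 Y_{t-1}$ and expand) to obtain, with $r(n)=\e(Z_0 Z_n)$,
\begin{equation*}
\alpha_1^2 \gamma(n) - \alpha_1\bigl(\gamma(n+1)+\gamma(n-1)\bigr) + \gamma(n) - r(n) = 0, \qquad n\in\mathbb{Z},
\end{equation*}
so that the whole problem reduces to identifying $r$.

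Next I would compute $r(n)$ by expanding $Z_t = \sigma_t^2(\epsilon_t^2-1) + l_1(L_{t-1}-1)$. For $n\geq 1$ the product $Z_1 Z_{n+1}$ breaks into four terms, and the decisive observation is the measurability fact that $\sigma_s^2$ is a function of $(\epsilon_u,L_u)_{u<s}$, so $\epsilon_s$ is independent of $\sigma_s^2$, while $\epsilon_s$ is independent of the entire liquidity process. Every term still carrying a lone factor $\epsilon_t^2-1$ therefore factorises and vanishes because $\e(\epsilon_t^2-1)=0$; only the pure liquidity term survives, giving $r(n)=l_1^2 s(n)$. By symmetry of the autocovariances this covers all $n\neq 0$ and yields \eqref{quadratic}. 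For $n=0$ the same independence annihilates the cross term in $\e(Z_0^2)$, leaving $r(0)=\e(\sigma_0^4)\,\mathrm{Var}(\epsilon_0^2)+l_1^2 s(0)$; rewriting $\e(\sigma_0^4)=\e(X_0^4)/\e(\epsilon_0^4)$ via \eqref{fourthofX} and using $\gamma(-1)=\gamma(1)$ to collapse $\gamma(n+1)+\gamma(n-1)$ into $2\gamma(1)$ produces \eqref{quadratic2}.

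I expect the only genuinely delicate point to be the independence bookkeeping in the computation of $r$: for instance $\sigma_{n+1}^2$ may well be correlated with both $L_0$ and $\sigma_1^2$, so the vanishing of the mixed terms cannot rest on independence of the $\sigma^2$ factors but only on isolating a factor $\epsilon_t^2-1$ that is independent of everything else inside the expectation. Keeping the index ranges straight---so that the relevant $\sigma^2$ depends only on \emph{strictly earlier} innovations---is the one place where a careless argument could silently use an unavailable independence, and it is therefore where I would be most careful.
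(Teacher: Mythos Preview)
Your proposal is correct and follows essentially the same route as the paper's own proof: verify $\e(X_0^4)<\infty$ via Lemma~\ref{thefourthmoment}, invoke Lemma~1 of \cite{vouti} to reduce to identifying $r(n)$, and then compute $r(n)$ by expanding $Z_t=\sigma_t^2(\epsilon_t^2-1)+l_1(L_{t-1}-1)$ and exploiting that each factor $\epsilon_t^2-1$ is independent of everything else in the corresponding expectation. Your explicit remark that the vanishing of the cross terms rests solely on isolating a lone $\epsilon_t^2-1$ factor (and not on any independence between $\sigma_{n+1}^2$ and $\sigma_1^2$ or $L_0$) is exactly the point the paper handles more tersely with ``$\epsilon_t$ is independent of $\sigma_s$ for $s\leq t$''.
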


%\begin{rem}
%We stress that our assumption $n\neq 0$ is crucial in a sense that if $n=0$, then \eqref{r(n)} would include term $\e(\sigma_t^4)$ involving the parameter $l_1$ making the calculations much more cumbersome.
%\end{rem}

\noindent
Now, let first $n\in\mathbb{Z}$ with $n\neq0$. Then

\begin{align}
\label{twoquadratic2}
\alpha_1^2 \gamma(0) - 2\alpha_1\gamma(1) + \gamma(0) - \frac{\e(X_0^4) Var(\epsilon_0^2)}{\e(\epsilon_0^4)} - l_1^2 s(0) =0 \nonumber\\
\alpha_1^2\gamma(n) - \alpha_1(\gamma(n+1) + \gamma(n-1)) + \gamma(n) - l_1^2s(n) = 0.
\end{align}
From the first equation we get

\begin{equation*}
l_1^2 = \frac{1}{s(0)}\left(\alpha_1^2 \gamma(0) - 2\alpha_1 \gamma(1)+ \gamma(0) - \frac{\e(X_0^4) Var(\epsilon_0^2)}{\e(\epsilon_0^4)} \right).
\end{equation*}
Substitution to \eqref{twoquadratic2} yields

\begin{scriptsize}
\begin{equation*}
\alpha_1^2\left(\gamma(n) - \frac{s(n)}{s(0)}\gamma(0)\right) + \alpha_1\left(2 \frac{s(n)}{s(0)} \gamma(1) - (\gamma(n+1) + \gamma(n-1))\right) + \gamma(n) + \frac{s(n)}{s(0)} \left(\frac{\e(X_0^4) Var(\epsilon_0^2)}{\e(\epsilon_0^4)} - \gamma(0) \right) = 0
\end{equation*}
\end{scriptsize}
Let us denote $\bo_0 = [\gamma(n+1), \gamma(n), \gamma(n-1), \gamma(1), \gamma(0), \e(X_0^4)]$ and

\begin{equation}
\begin{split}
\label{quadraticterms3}
a_0(\bo_0) &= \gamma(n) - \frac{s(n)}{s(0)}\gamma(0)\\
b_0(\bo_0) &= 2 \frac{s(n)}{s(0)} \gamma(1) - (\gamma(n+1) + \gamma(n-1))\\
c_0(\bo_0) &= \gamma(n) + \frac{s(n)}{s(0)} \left(\frac{\e(X_0^4) Var(\epsilon_0^2)}{\e(\epsilon_0^4)} - \gamma(0) \right).
\end{split}
\end{equation}
Assuming that $a_0(\bo_0) \neq 0$ we have the following solutions for the model parameters $\alpha_1$ and $l_1$:

\begin{equation}
\label{alphaofgamma2}
\alpha_1(\bo_0) = \frac{-b_0(\bo_0) \pm \sqrt{b_0(\bo_0)^2 - 4a_0(\bo_0)c_0(\bo_0)}}{2a_0(\bo_0)}
\end{equation}
and

\begin{equation}
\label{lofgamma2}
l_1(\bo_0) = \sqrt{\frac{1}{s(0)}\left(\alpha_1(\bo_0)^2 \gamma(0) - 2\alpha_1(\bo_0) \gamma(1)+ \gamma(0) - \frac{\e(X_0^4) Var(\epsilon_0^2)}{\e(\epsilon_0^4)} \right)}.
\end{equation}
Finally, denoting $\mu = \e(X_0^2)$ and using \eqref{meanofX} we may write

\begin{equation}
\label{alpha0ofgamma2}
\alpha_0(\bo_0, \mu) = \mu(1-\alpha_1(\bo_0)) - l_1(\bo_0).
\end{equation}
Now, let $n_1, n_2 \in\mathbb{Z}$ with $n_1\neq n_2$ and $n_1,n_2\neq 0$. Then

\begin{align}
\label{twoquadratic}
\alpha_1^2\gamma(n_1) - \alpha_1(\gamma(n_1+1) + \gamma(n_1-1)) + \gamma(n_1) - l_1^2s(n_1)  &= 0\\
\alpha_1^2\gamma(n_2) - \alpha_1(\gamma(n_2+1) + \gamma(n_2-1)) + \gamma(n_2) - l_1^2s(n_2)  &= 0. \nonumber
\end{align}
Assuming that $n_2$ is chosen in such a way that $s(n_2) \neq 0$ we have

\begin{equation}
\label{l_1}
l_1^2 = \frac{\alpha_1^2\gamma(n_2) - \alpha_1(\gamma(n_2+1) + \gamma(n_2-1)) + \gamma(n_2)}{s(n_2)}.
\end{equation}
Substitution to \eqref{twoquadratic} yields

\begin{tiny}
\begin{equation*}
\alpha_1^2\left(\gamma(n_1)-\frac{s(n_1)}{s(n_2)}\gamma(n_2)\right) - \alpha_1\left(\gamma(n_1+1) + \gamma(n_1-1)- \frac{s(n_1)}{s(n_2)}\left(\gamma(n_2+1) + \gamma(n_2-1)\right)\right) + \gamma(n_1) - \frac{s(n_1)}{s(n_2)}\gamma(n_2) = 0.
\end{equation*}
\end{tiny}
Let us denote $\bo = [\gamma(n_1+1), \gamma(n_2+1), \gamma(n_1), \gamma(n_2), \gamma(n_1-1), \gamma(n_2-1)]$ and

\begin{align}
\label{quadraticterms2}
\begin{split}
a(\bo) &= \gamma(n_1) - \frac{s(n_1)}{s(n_2)}\gamma(n_2)\\
b(\bo) &=  \frac{s(n_1)}{s(n_2)}\left(\gamma(n_2+1) + \gamma(n_2-1)\right) - (\gamma(n_1+1) + \gamma(n_1-1)).\\
\end{split}
\end{align}
Assuming $a(\bo) \neq 0$ we obtain the following solutions for the model parameters $\alpha_1$ and $l_1$:

\begin{equation}
\label{alphaofgamma}
\alpha_1(\bo) = \frac{-b(\bo) \pm \sqrt{b(\bo)^2 - 4a(\bo)^2}}{2a(\bo)},
\end{equation} 
and

\begin{equation}
\label{lofgamma}
l_1(\bo) = \sqrt{\frac{\alpha_1^2(\bo)\gamma(n_2) - \alpha_1(\bo)(\gamma(n_2+1) + \gamma(n_2-1)) + \gamma(n_2)}{s(n_2)}}.
\end{equation}
Again, $\alpha_0$ is given by

\begin{equation}
\label{alpha0ofgamma}
\alpha_0(\bo, \mu) = \mu(1-\alpha_1(\bo)) - l_1(\bo).
\end{equation}

\begin{rem}
Note that here we assumed $s(n_2)\neq 0$ and $a(\bo) \neq 0$ which means that we choose $n_1,n_2$ in a suitable way. Notice however, that these assumptions are not a restriction. Firstly, the case where $s(n_2)=0$ for all $n_2\neq 0$ corresponds to the more simple case where $L$ is a sequence of uncorrelated random variables. Secondly, if $s(n_2)\neq 0$ and $a(\bo)=0$, the second order term vanishes and 
we get a linear equation for $\alpha_1$. For detailed discussion on this phenomena, we refer to \cite{vouti}.
\end{rem}

\begin{rem}
\label{rem:sign}
At first glimpse Equations \eqref{alphaofgamma2} and \eqref{alphaofgamma} may seem useless as one needs to choose between signs. However, it usually suffices to know additional values of the covariance of the noise (see \cite{vouti}). In particular, it suffices that $s(n) \to 0$ (see \cite{vouti2}).
\end{rem}

\section{Parameter estimation}
\label{sec:estimation}
In this section we discuss how to estimate the model parameters consistently from the observations provided that the covariance of the liquidity $L$ is known. Based on formulas for the parameters provided in Subsection \ref{subsec:parameters}, it suffices that the covariances of $X^2$ can be estimated consistently. 
\subsection{Consistency of autocovariance estimators}
Throughout this section we denote 
\begin{equation*}
f(t-s) = \e(L_tL_s) = Cov(L_t,L_s) + 1 = s(t-s)+1.
\end{equation*}

\begin{lemma}
\label{covariance}
Let $t,s \in\mathbb{Z}$. Then

\begin{equation*}
\e(\sigma_t^ 2 L_s) = \frac{\alpha_0}{1-\alpha_1} + l_1\sum_{i=0}^ \infty \alpha_1^ i f(t-s-i-1)
\end{equation*}
\begin{proof}
By \eqref{uniquesolution} and Fubini-Tonelli

\begin{equation*}
\begin{split}
\e(\sigma_t^ 2 L_s) &= \sum_{i=0}^\infty\left(\prod_{j=0}^{i-1} \alpha_1 \e(\epsilon_{t-1-j}^ 2)\right)  \e\left((\alpha_0+l_1L_{t-1-i})L_s\right)\\
&= \alpha_0 \sum_{i=0}^ \infty \alpha_1^ i + l_1\sum_{i=0}^ \infty \alpha_1^ i \e(L_{t-1-i} L_s)\\
&=  \alpha_0 \sum_{i=0}^ \infty \alpha_1^ i + l_1\sum_{i=0}^ \infty \alpha_1^i f(t-s-i-1),
\end{split}
\end{equation*}
where the series converges since $\alpha_1 <1$ and $\e(L_0^2)< \infty$.
\end{proof}

\end{lemma}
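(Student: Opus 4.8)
The plan is to start from the explicit series representation of the stationary solution given in \eqref{uniquesolution}. Shifting the time index there from $t+1$ to $t$, we have $\sigma_t^2 = \sum_{i=0}^\infty \left(\prod_{j=0}^{i-1} A_{t-1-j}\right) B_{t-1-i}$, where $A_{t-1-j} = \alpha_1 \epsilon_{t-1-j}^2$ and $B_{t-1-i} = \alpha_0 + l_1 L_{t-1-i}$. Multiplying by $L_s$ and taking expectations, I would interchange the expectation with the infinite sum. This interchange is the only potentially delicate point, but it is immediate here: all summands are nonnegative (the $A$'s, $B$'s and the $L$'s are all nonnegative), so Tonelli's theorem applies without any integrability check and yields $\e(\sigma_t^2 L_s) = \sum_{i=0}^\infty \e\left(\left(\prod_{j=0}^{i-1} A_{t-1-j}\right) B_{t-1-i} L_s\right)$.

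Next I would exploit the independence structure. The factor $\prod_{j=0}^{i-1} A_{t-1-j} = \alpha_1^i \prod_{j=0}^{i-1} \epsilon_{t-1-j}^2$ involves only the innovations, while $B_{t-1-i} L_s = (\alpha_0 + l_1 L_{t-1-i}) L_s$ involves only the liquidity process. Since $(\epsilon_t)_{t\in\mathbb{Z}}$ and $(L_t)_{t\in\mathbb{Z}}$ are independent and the $\epsilon_t$ are i.i.d.\ with $\e(\epsilon_0^2) = 1$, the $\epsilon$-part factors out as $\prod_{j=0}^{i-1} \e(\epsilon_{t-1-j}^2) = 1$. Hence the $i$-th term reduces to $\alpha_1^i \, \e(B_{t-1-i} L_s)$.

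It then remains to evaluate $\e(B_{t-1-i} L_s) = \alpha_0 \e(L_s) + l_1 \e(L_{t-1-i} L_s)$. Using $\e(L_0) = 1$, so that $\e(L_s) = 1$ by stationarity, together with the definition $f(t-s) = \e(L_t L_s)$, this equals $\alpha_0 + l_1 f(t-s-i-1)$. Summing over $i$ then splits the answer into the geometric series $\alpha_0 \sum_{i=0}^\infty \alpha_1^i = \alpha_0/(1-\alpha_1)$, which converges because $\alpha_1 < 1$, and the remaining term $l_1 \sum_{i=0}^\infty \alpha_1^i f(t-s-i-1)$, which is exactly the claimed expression.

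Finally I would record why the second series converges, closing the argument: by Cauchy--Schwarz and stationarity one has $|f(k)| = |\e(L_t L_s)| \leq \e(L_0^2) < \infty$ uniformly in $k$, so the series is dominated by $\e(L_0^2) \sum_{i=0}^\infty \alpha_1^i < \infty$ since $\alpha_1 < 1$. I do not expect a genuine obstacle in this proof; the only points requiring care are the index shift in passing from $\sigma_{t+1}^2$ to $\sigma_t^2$, and ensuring that the $\epsilon$-factorization uses precisely the normalization $\e(\epsilon_0^2) = 1$ rather than any higher moment of the innovations.
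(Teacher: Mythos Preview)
Your proposal is correct and follows essentially the same route as the paper's proof: both start from the series representation \eqref{uniquesolution}, apply Fubini--Tonelli (you are slightly more explicit in invoking nonnegativity), factor the expectation using the independence of $(\epsilon_t)$ and $(L_t)$ together with $\e(\epsilon_0^2)=1$, and then identify the two resulting series. Your added justification of the convergence of the second series via the uniform bound $|f(k)|\le \e(L_0^2)$ is exactly what the paper's final remark ``the series converges since $\alpha_1<1$ and $\e(L_0^2)<\infty$'' is pointing to.
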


The following variant of the law of large number is needed for the proof of the consistency of the estimators.

\begin{lemma}
\label{lawoflarge}
Let $(U_1, U_2, ...)$ be a sequence of random variables with a mutual expectation. In addition, assume that $\mathrm{Var}(U_j) \leq C$ and $\left|\mathrm{Cov}(U_j,U_k)\right| \leq g(|k-j|)$, where $g(i)\to 0$ as $i\to\infty$. Then 

\begin{equation*}
\frac{1}{n} \sum_{k=1}^ n U_k \to \e(U_1)
\end{equation*}
in probability.
\begin{proof}

By Chebyshev's inequality
\begin{equation*}
\begin{split}
\p\left(\left|\frac{1}{n} \sum_{k=1}^ n U_k - \e(U_1)\right| > \varepsilon\right)  &\leq \frac{\mathrm{Var}\left( \sum_{k=1}^ n U_k\right)}{\varepsilon^2 n^2},
\end{split}
\end{equation*}
where

\begin{equation*}
\begin{split}
\mathrm{Var}\left(\sum_{k=1}^ n U_k\right) &= \sum_{k,j=1}^n \mathrm{Cov}\left(U_k, U_j\right)\\
&= \sum_{k=1}^n \mathrm{Var}(U_k) + 2 \sum_{k=1}^ n \sum_{j=1}^{k-1} \mathrm{Cov} (U_k,U_j)\\
&\leq nC + 2 \sum_{k=1}^ n \sum_{j=1}^{k-1}\left| \mathrm{Cov} (U_k,U_j)\right|.
\end{split}
\end{equation*}
Fix $\delta >0$. Then, there exists $N_\delta\in\mathbb{N}$ such that $g(|k-j|) <\delta$ whenever $|k-j| \geq N_\delta$. Note also that by Cauchy-Schwarz it holds that $\left|\mathrm{Cov}(U_k,U_j)\right| \leq C$. Assume that $n> N_\delta$. Now

\begin{equation*}
\begin{split}
\sum_{k=1}^ n \sum_{j=1}^{k-1}\left| \mathrm{Cov} (U_k,U_j)\right| &\leq \sum_{k=1}^n \sum_{j=1}^ {k-N_\delta} g(|k-j|) + \sum_{k=1}^n \sum_{j=k-N_\delta +1}^ {k-1}\mathclap{C}\\
&\leq n^2\delta + nN_\delta C.
\end{split}
\end{equation*}
Hence

\begin{equation*}
\p\left(\left|\frac{1}{n} \sum_{k=1}^ n U_k - \e(U_1)\right| > \varepsilon\right) \leq \frac{nC + 2n^ 2\delta + 2nN_\delta C}{\varepsilon^2 n^ 2} = \frac{2\delta}{\varepsilon^ 2} + \mathcal{O}\left(\frac{1}{n}\right)
\end{equation*}
concluding the proof, since $\delta$ was arbitrary small.
\end{proof}
\end{lemma}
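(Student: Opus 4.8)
The plan is to run a second-moment (Chebyshev) argument, exploiting the decay of the covariances to control the variance of the partial sums. Since the variables share a mutual expectation, write $\mu = \e(U_1) = \e(U_k)$ for all $k$. By Chebyshev's inequality,
$$\p\left(\left|\frac{1}{n}\sum_{k=1}^n U_k - \mu\right| > \varepsilon\right) \leq \frac{\mathrm{Var}\left(\sum_{k=1}^n U_k\right)}{\varepsilon^2 n^2},$$
so the whole statement reduces to showing that the right-hand side can be made arbitrarily small, i.e. that $\mathrm{Var}\left(\sum_{k=1}^n U_k\right)$ is of smaller order than $n^2$ in the relevant sense.

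First I would expand the variance of the sum into its diagonal and off-diagonal contributions,
$$\mathrm{Var}\left(\sum_{k=1}^n U_k\right) = \sum_{k=1}^n \mathrm{Var}(U_k) + 2\sum_{k=1}^n\sum_{j=1}^{k-1}\mathrm{Cov}(U_k,U_j).$$
The diagonal part is bounded by $nC$, which is harmless after dividing by $n^2$. The off-diagonal part is where the decorrelation hypothesis $g(i)\to 0$ must enter.

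The key step is to split the inner double sum according to the lag $|k-j|$. Fixing $\delta>0$, the assumption $g(i)\to 0$ supplies an $N_\delta$ with $g(i)<\delta$ for every $i\geq N_\delta$. For pairs with $|k-j|\geq N_\delta$ I bound $|\mathrm{Cov}(U_k,U_j)|\leq g(|k-j|)<\delta$, of which there are at most $n^2$; for the remaining pairs with $|k-j|<N_\delta$ I use the crude estimate $|\mathrm{Cov}(U_k,U_j)|\leq C$, valid by Cauchy--Schwarz since the variances are uniformly bounded, and there are only $\mathcal{O}(nN_\delta)$ of those. This yields
$$\mathrm{Var}\left(\sum_{k=1}^n U_k\right) \leq nC + 2n^2\delta + 2nN_\delta C.$$

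Substituting back and dividing by $\varepsilon^2 n^2$ produces a bound of the shape $\tfrac{2\delta}{\varepsilon^2} + \mathcal{O}(1/n)$; letting $n\to\infty$ kills the $\mathcal{O}(1/n)$ term and leaves $\tfrac{2\delta}{\varepsilon^2}$, and since $\delta>0$ was arbitrary the limit superior of the probability is zero, giving convergence in probability. There is no deep obstacle here — this is a standard weak law under summable-in-the-limit decorrelation. The only point needing care, and the spot where a careless count would go wrong, is the bookkeeping in the split: one must verify that the small-lag block really contributes only $\mathcal{O}(nN_\delta)$ terms, and that $N_\delta$ is chosen and fixed \emph{before} sending $n\to\infty$, so that the order of the two limits (first $n\to\infty$, then $\delta\to 0$) is respected.
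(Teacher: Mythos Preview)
Your proof is correct and follows essentially the same argument as the paper: Chebyshev's inequality, the diagonal/off-diagonal split of the variance, the lag cutoff $N_\delta$ separating terms bounded by $\delta$ from the $\mathcal{O}(nN_\delta)$ terms bounded via Cauchy--Schwarz by $C$, and the final $\tfrac{2\delta}{\varepsilon^2}+\mathcal{O}(1/n)$ bound. The only cosmetic difference is that you articulate the order-of-limits issue more explicitly than the paper does.
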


\begin{rem}
Note that the convergence in Lemma \ref{lawoflarge} actually takes place also in $L^2$. However, to obtain consistency of our estimators, the convergence in probability suffices.
\end{rem}

Assume that $(X^2_1, X^2_2, \hdots ,X^2_N)$ is an observed series from an generalized ARCH process $(X_t)_{t\in\mathbb{Z}}$. We use the following estimator of the autocovariance function of $X_t^2$

\begin{equation*}
\hat{\gamma}_N(n) = \frac{1}{N} \sum_{t=1}^{N-n} \left(X^2_t-\bar{X^2}\right)\left(X^2_{t+n}-\bar{X^2}\right)\quad\text{for }n\geq 0,
\end{equation*}
where $\bar{X^2}$ is the sample mean of the observations. We show that the estimator above is consistent in two steps. Namely, we consider the sample mean and the term

\begin{equation*}
\frac{1}{N}\sum_{t=1}^{N-n} X^2_tX_{t+n}^2
\end{equation*}
separately. If the both terms are consistent, consistency of the autocovariance estimator follows.

\begin{lemma}
\label{lemma:consistency1}
Suppose $\e(\epsilon_0^4) < \infty$ and $s(t) = cov(L_0L_t) \to 0$ as $t\to\infty$. If $\alpha_1 < \frac{1}{\sqrt{\e(\epsilon_0^ 4)}}$, then the sample mean
\begin{equation*}
\hat{\mu}_N = \frac{1}{N}\sum_{t=1}^N X_t^2
\end{equation*}
converges in probability to $\e(X_0^2)$.
\begin{proof}
By Lemma \ref{lawoflarge} it suffices to show that $cov(X_1^2, X_{t+1}^2)$ converges to zero as $t$ tends to infinity. For simplicity, let us assume that $t\geq 2$. Now by fixing $k=t-1$ in \eqref{iteration} we have

\begin{equation*}
\begin{split}
X_{t+1}^ 2 &= \epsilon_{t+1}^2 \left(\left(\prod_{i=0}^{t-1} A_{t-i} \right) \sigma_1^2+ \sum_{i=0}^{t-1}\left(\prod_{j=0}^{i-1} A_{t-j}\right)B_{t-i}\right)\\ 
&= \epsilon_{t+1}^2 \left(\left(\prod_{i=0}^{t-2} A_{t-i} \right) \alpha_1X_1^2 + \sum_{i=0}^{t-1}\left(\prod_{j=0}^{i-1} A_{t-j}\right)B_{t-i}\right).
\end{split}
\end{equation*}
Hence

\begin{equation*}
\begin{split}
X_{t+1}^2X_1^2=\left(\prod_{i=0}^ {t-2} \alpha_1 \epsilon_{t-i}^ 2\right) \alpha_1 X_1^4 \epsilon_{t+1}^ 2 + \epsilon_{t+1}^ 2 X_1^2\sum_{i=0}^ {t-1}\left(\prod_{j=0}^ {i-1} \alpha_1 \epsilon_{t-j}^2\right)(\alpha_0+l_1L_{t-i}).
\end{split}
\end{equation*}
Taking expectations yields

\begin{equation*}
\begin{split}
\e(X_{t+1}^2X_1^ 2) = \alpha_1^ t \e(X_1^ 4) + \alpha_0\e(X_1^ 2) \sum_{i=0}^ {t-1} \alpha_1^ i + l_1\sum_{i=0}^ {t-1} \alpha_1^ i \e(X_1^2L_{t-i}).
\end{split}
\end{equation*}
By Lemma \ref{covariance}, and since $\alpha_1 < 1$ we obtain that

\begin{footnotesize}
\begin{equation*}
\begin{split}
\e(X_{t+1}^2X_1^ 2) &= \alpha_1^ t \e(X_0^ 4) + \alpha_0\e(X_0^ 2) \sum_{i=0}^ {t-1} \alpha_1^ i +l_1\sum_{i=0}^{t-1} \alpha_1^ i\left(\frac{\alpha_0}{1-\alpha_1 }+ l_1\sum_{j=0}^ \infty \alpha_1^ j f(i-t-j)\right)\\
&=  \alpha_1^ t \e(X_0^ 4) + \left(\alpha_0\e(X_0^ 2) +\frac{l_1\alpha_0}{1-\alpha_1}\right) \sum_{i=0}^ {t-1} \alpha_1^ i + l_1^ 2\sum_{i=0}^ {t-1} \sum_{j=0}^ \infty \alpha_1^ {i+j} f(i-t-j).
\end{split}
\end{equation*}
\end{footnotesize}
As $t$ tends to infinity

\begin{equation*}
\begin{split}
\lim_{t\to\infty} \e(X_{t+1}^2X_1^ 2) &= \frac{\alpha_0\e(X_0^ 2)}{1-\alpha_1} + \frac{l_1\alpha_0}{(1-\alpha_1)^ 2}+l_1^ 2\lim_{t\to\infty}\sum_{i=0}^ {t-1} \sum_{j=0}^ \infty \alpha_1^ {i+j} f(i-t-j)\\
&= \frac{\alpha_0^ 2+2\alpha_0 l_1}{(1-\alpha_1)^ 2} + l_1^ 2\lim_{t\to\infty}\sum_{i=0}^ {\infty} \sum_{j=0}^ \infty \alpha_1^ {i+j} f(i-t-j),
\end{split}
\end{equation*}
where we have used \eqref{meanofX} for expectation of $X_0^2$. Note that $f(t) = s(t) +1$. Hence, there exists $M >0$ such that for the terms  in the double sum it holds that

\begin{equation*}
\left|\alpha_1^ {i+j} f(i-t-j)\right| \leq M \alpha_1^ {i+j}\qquad\text{for every }i, j, t.
\end{equation*}
Thus we have a uniform integrable upper bound and consequently, dominated convergence theorem yields

\begin{equation*}
\lim_{t\to\infty}\sum_{i=0}^ {\infty} \sum_{j=0}^ \infty \alpha_1^ {i+j} f(i-t-j) = \sum_{i=0}^ \infty \sum_{j=0}^ {\infty} \alpha_1^ {i+j} = \frac{1}{(1-\alpha_1)^ 2}.
\end{equation*}
Finally, we may conclude that

\begin{equation*}
\lim_{t\to\infty} \e(X_{t+1}^2X_1^ 2) = \left(\frac{\alpha_0+l_1}{1-\alpha_1}\right)^ 2 = \e(X_1^ 2)^2.
\end{equation*}
\end{proof}
\end{lemma}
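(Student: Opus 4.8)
The plan is to deduce the claim from the law of large numbers for correlated sequences, Lemma \ref{lawoflarge}, applied to the summands $U_t = X_t^2$. To invoke that lemma I must check its three hypotheses. First, strict stationarity of the solution from Theorem \ref{theo:stationarity} makes $(X_t^2)_{t\in\mathbb{Z}}$ stationary, so all summands share the common expectation $\e(X_0^2)$. Second, I need uniformly bounded variances: Lemma \ref{thefourthmoment} ensures $\e(\sigma_0^4) < \infty$ under the hypothesis $\alpha_1 < \frac{1}{\sqrt{\e(\epsilon_0^4)}}$, hence $\e(X_0^4) = \e(\sigma_0^4)\e(\epsilon_0^4) < \infty$ by \eqref{fourthofX}, and stationarity gives $\mathrm{Var}(X_t^2) = \mathrm{Var}(X_0^2) < \infty$ for all $t$. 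Third, and this is the substantive requirement, I need a decaying bound on $|\mathrm{Cov}(X_t^2, X_s^2)|$; by stationarity this covariance equals $\gamma(|t-s|)$, so it suffices to prove $\gamma(t) = \mathrm{Cov}(X_1^2, X_{t+1}^2) \to 0$ as $t\to\infty$.

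The heart of the argument is therefore the mixed-moment asymptotic $\e(X_{t+1}^2 X_1^2) \to \e(X_0^2)^2$. I would compute this moment by unfolding the recursion: fixing $k = t-1$ in the iteration \eqref{iteration} expresses $\sigma_{t+1}^2$, and thus $X_{t+1}^2 = \sigma_{t+1}^2 \epsilon_{t+1}^2$, through $\sigma_1^2$ (equivalently $X_1^2$) together with the innovations and liquidities carrying intermediate indices. Multiplying by $X_1^2$ and taking expectations, the independence structure of the i.i.d.\ noise --- in particular that $\epsilon_{t+1}$ is independent of all quantities indexed by $\leq t$ and that each $\epsilon_s$ is independent of $\sigma_s$ --- reduces every $\epsilon$-product to a power $\alpha_1^i$. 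What remains is a leading term $\alpha_1^t\,\e(X_0^4)$, two geometric sums, and a double series $l_1^2\sum_i\sum_j \alpha_1^{i+j} f(i-t-j)$, in which the liquidity enters only through $f$ after evaluating the factors $\e(X_1^2 L_{t-i})$ via Lemma \ref{covariance}.

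The hard part will be passing to the limit in this double series. The leading term $\alpha_1^t\,\e(X_0^4)$ vanishes because $\alpha_1 < 1$, and the geometric sums are elementary; the delicacy lies in $\sum_i\sum_j \alpha_1^{i+j} f(i-t-j)$ as $t\to\infty$. Here I would apply dominated convergence over the index set: since $f(n) = s(n) + 1$ and $\e(L_0^2) < \infty$, Cauchy--Schwarz together with stationarity yields the uniform bound $|f(n)| \leq \e(L_0^2) =: M$, so each summand is dominated by the summable $M\alpha_1^{i+j}$; meanwhile, for fixed $i,j$ the argument $i-t-j \to -\infty$, and the hypothesis $s(t)\to 0$ forces $f(i-t-j)\to 1$ pointwise. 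Consequently the double series converges to $\sum_i\sum_j \alpha_1^{i+j} = (1-\alpha_1)^{-2}$. Reassembling the surviving terms and using \eqref{meanofX} for $\e(X_0^2)$, the limit of $\e(X_{t+1}^2 X_1^2)$ equals $\left(\frac{\alpha_0+l_1}{1-\alpha_1}\right)^2 = \e(X_0^2)^2$, so that $\gamma(t)\to 0$. With all three hypotheses verified, Lemma \ref{lawoflarge} then delivers $\hat\mu_N \to \e(X_0^2)$ in probability.
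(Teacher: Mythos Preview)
Your proposal is correct and follows essentially the same route as the paper: reduce to Lemma \ref{lawoflarge}, then show $\mathrm{Cov}(X_1^2,X_{t+1}^2)\to 0$ by unfolding the recursion \eqref{iteration} at depth $k=t-1$, invoking Lemma \ref{covariance} for the mixed moments $\e(X_1^2 L_{t-i})$, and passing to the limit in the resulting double series via dominated convergence with the summable majorant $M\alpha_1^{i+j}$. The only differences are expository: you spell out the verification of the bounded-variance hypothesis of Lemma \ref{lawoflarge} (via Lemma \ref{thefourthmoment}) and make the Cauchy--Schwarz bound $|f(n)|\leq \e(L_0^2)$ explicit, whereas the paper leaves these implicit.
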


\begin{lemma}
\label{lemma:consistency2}
Suppose $\e(L_0^ 4)<\infty$ and $\e(\epsilon_0^ 8) <\infty$. In addition, assume that for every fixed $n, n_1$ and $n_2$ it holds that $cov(L_0,L_t)\to 0$, $cov(L_0L_n, L_{\pm t}) \to 0$ and $cov(L_0L_{n_1}, L_tL_{t+n_2})\to 0$ as $t\to\infty$. If $\alpha_1 < \frac{1}{\e(\epsilon_0^8)^{\frac{1}{4}}}$, then
\begin{equation*}
\frac{1}{N-n}\sum_{t=1}^{N-n} X^2_tX_{t+n}^2
\end{equation*}
converges in probability to $\e(X_0^2X_n^2)$ for every $n\in\mathbb{Z}$.
\begin{proof}
Again, by Lemma \ref{lawoflarge} it suffices to show that $cov(X_0^2X_n^2, X_t^2X_{t+n}^2)$ converges to zero as $t$ tends to infinity. Hence we assume that $t > n$. By \eqref{uniquesolution} 

\begin{footnotesize}
\begin{equation}
\label{whatsthelimit}
\begin{split}
\e(X_0^ 2X_n^ 2X_t^ 2X_{t+n}^ 2) &= \e \sum_{i_1=0}^ \infty  \sum_{i_2=0}^ \infty  \sum_{i_3=0}^ \infty  \sum_{i_4=0}^ \infty \left(\prod_{j=0}^ {i_1-1} A_{-1-j}\right)B_{-1-i_1}\epsilon_0^ 2 \left(\prod_{j=0}^ {i_2-1} A_{n-1-j}\right)B_{n-1-i_2}\epsilon_n^ 2\\ 
&\quad \left(\prod_{j=0}^ {i_3-1} A_{t-1-j}\right)B_{t-1-i_3}\epsilon_t^ 2 \left(\prod_{j=0}^ {i_4-1} A_{t+n-1-j}\right)B_{t+n-1-i_4}\epsilon_{t+n}^ 2.
\end{split}
\end{equation}
\end{footnotesize}
Since the summands are non-negative, we can take the expectation inside. Furthermore, by independence of the sequences $\epsilon_t$ and $L_t$ we observe
\begin{equation}
\label{whatsthelimit2}
\begin{split}
\e(X_0^ 2X_n^ 2X_t^ 2X_{t+n}^ 2) &= \sum_{i_1=0}^ \infty  \sum_{i_2=0}^ \infty  \sum_{i_3=0}^ \infty  \sum_{i_4=0}^ \infty \e\left(B_{-1-i_1} B_{n-1-i_2} B_{t-1-i_3} B_{t+n-1-i_4}\right)\\
&\e\left(\epsilon_0^ 2\epsilon_n^ 2\epsilon_t^ 2\epsilon_{t+n}^ 2\prod_{j=0}^ {i_1-1} A_{-1-j} \prod_{j=0}^ {i_2-1} A_{n-1-j} \prod_{j=0}^ {i_3-1} A_{t-1-j}\prod_{j=0}^ {i_4-1} A_{t+n-1-j}\right).
\end{split}
\end{equation}
Next we justify the use of the dominated convergence theorem in order to change the order of the summations and taking the limit. Consequently, it suffices to study the limits of the terms 
\begin{equation}
\label{twoterms}
\begin{split}
&\e\left(\epsilon_0^ 2\epsilon_n^ 2\epsilon_t^ 2\epsilon_{t+n}^ 2\prod_{j=0}^ {i_1-1} A_{-1-j} \prod_{j=0}^ {i_2-1} A_{n-1-j} \prod_{j=0}^ {i_3-1} A_{t-1-j}\prod_{j=0}^ {i_4-1} A_{t+n-1-j}\right)\cdot\\
&\quad \e\left(B_{-1-i_1} B_{n-1-i_2} B_{t-1-i_3} B_{t+n-1-i_4}\right).
\end{split}
\end{equation}
\textbf{Step 1: finding summable upper bound.}\\
First note that the latter term is bounded by a constant. Indeed,
by stationarity of $(B_t)_{t\in\mathbb{Z}}$ we can write
\begin{equation}
\label{latterterm}
\begin{split}
\e\left(B_{-i_1} B_{n-i_2} B_{t-i_3} B_{t+n-i_4}\right) &= \alpha_0^ 4 + 4\alpha_0^ 3l_1 + \alpha_0^ 2l_1^ 2\big(\e(L_{-i_1}L_{n-i_2}) + \e(L_{-i_1}L_{t-i_3})\\
& + \e(L_{-i_1}L_{t+n-i_4}) + \e(L_{n-i_2}L_{t-i_3}) + \e(L_{n-i_2}L_{t+n-i_4})\\
& + \e(L_{t-i_3}L_{t+n-i_4})\big) + \alpha_0l_1^ 3\big(\e(L_{-i_1}L_{n-i_2}L_{t-i_3})\\
& + \e(L_{-i_1}L_{t-i_3}L_{t+n-i_4}) + \e(L_{-i_1}L_{n-i_2}L_{t+n-i_4})\\ 
&+ \e(L_{n-i_2}L_{t-i_3}L_{t+n-i_4})\big) + l_1^ 4\e(L_{-i_1}L_{n-i_2}L_{t-i_3}L_{t+n-i_4}),
\end{split}
\end{equation}
which is bounded by a repeated application of Cauchy-Schwarz inequality and the fact that the fourth moment of $L_0$ is finite. \\
Consider now the first term in \eqref{twoterms}. First we recall the elementary fact
\begin{equation}
\label{holders}
1 = \e(\epsilon_0^ 2) \leq \sqrt{\e(\epsilon_0^ 4)} \leq \e(\epsilon_0^ 6)^{\frac{1}{3}} \leq \e(\epsilon_0^ 8)^{\frac{1}{4}} < \infty. 
\end{equation}
Next note that the first term in \eqref{twoterms} is bounded for every set of indices. Indeed, this follows from the independence of $\epsilon$ and the observation that we obtain terms up to power 8 at most. That is, terms of form $\epsilon_t^8$ and by assumption, $\e (\epsilon_t^8) < \infty$. Let now $n>0$. Then 

\begin{equation*}
\begin{split}
&\e\left(\epsilon_0^ 2\epsilon_n^ 2\epsilon_t^ 2\epsilon_{t+n}^ 2\prod_{j=0}^ {i_1-1} A_{-1-j} \prod_{j=0}^ {i_2-1} A_{n-1-j} \prod_{j=0}^ {i_3-1} A_{t-1-j}\prod_{j=0}^ {i_4-1} A_{t+n-1-j}\right)\\
&= \e\left(\epsilon_{t+n}^2\right)\e\left(\epsilon_0^ 2\epsilon_n^ 2\epsilon_t^ 2\prod_{j=0}^ {i_1-1} A_{-1-j} \prod_{j=0}^ {i_2-1} A_{n-1-j} \prod_{j=0}^ {i_3-1} A_{t-1-j}\prod_{j=0}^ {i_4-1} A_{t+n-1-j}\right)\\
&\leq \e\left(\epsilon_{t+n}^2\right)\e\left(\epsilon_{t}^4\right)\e\left(\epsilon_0^ 2\epsilon_n^ 2\prod_{j=0}^ {i_1-1} A_{-1-j} \prod_{j=0}^ {i_2-1} A_{n-1-j} \prod_{j=0}^ {i_3-1} A_{t-1-j}\prod_{\mathclap{\begin{subarray}{c}
j=0\\ 
j \neq n-1
\end{subarray}}}^ {i_4-1} A_{t+n-1-j}\right)\\
&\leq\e\left(\epsilon_{t+n}^2\right)\e\left(\epsilon_{t}^4\right)\e\left(\epsilon_{n}^6\right)\e\left(\epsilon_0^ 2\prod_{j=0}^ {i_1-1} A_{-1-j} \prod_{j=0}^ {i_2-1} A_{n-1-j} \prod_{\mathclap{\begin{subarray}{c}
j=0\\ 
j \neq t-1-n
\end{subarray}}}^ {i_3-1} A_{t-1-j}\prod_{\mathclap{\begin{subarray}{c}
j=0\\ 
j \neq n-1\\
j\neq t-1
\end{subarray}}}^ {i_4-1} A_{t+n-1-j}\right)\\
&\leq\e\left(\epsilon_{t+n}^2\right)\e\left(\epsilon_{t}^4\right)\e\left(\epsilon_{n}^6\right)\e\left(\epsilon_{0}^8\right)
\e\left(\prod_{j=0}^ {i_1-1} A_{-1-j} \prod_{\mathclap{\begin{subarray}{c}
j=0\\ 
j \neq n-1
\end{subarray}}}^ {i_2-1} A_{n-1-j} \prod_{\mathclap{\begin{subarray}{c}
j=0\\ 
j \neq t-1-n\\
j\neq t-1
\end{subarray}}}^ {i_3-1} A_{t-1-j}\prod_{\mathclap{\begin{subarray}{c}
j=0\\ 
j \neq n-1\\
j \neq t-1\\
j \neq t+n-1
\end{subarray}}}^ {i_4-1} A_{t+n-1-j}\right).
\end{split}
\end{equation*}
Computing similarly for $n=0$, using stationarity of $A$,  and observing that 
$$
1=\e(\epsilon_0^ 2) \leq \e(\epsilon_0^ 4) \leq \e(\epsilon_0^ 6) \leq \e(\epsilon_0^ 8) 
$$
we hence deduce 
\begin{equation}
\label{loose}
\begin{split}
&\e\left(\epsilon_0^ 2\epsilon_n^ 2\epsilon_t^ 2\epsilon_{t+n}^ 2\prod_{j=0}^ {i_1-1} A_{-1-j} \prod_{j=0}^ {i_2-1} A_{n-1-j} \prod_{j=0}^ {i_3-1} A_{t-1-j}\prod_{j=0}^ {i_4-1} A_{t+n-1-j}\right)\\
&\leq C\e\left(\prod_{j=0}^ {i_1-1} A_{-j} \prod_{j=0}^ {i_2-1} A_{n-j} \prod_{j=0}^ {i_3-1} A_{t-j}\prod_{j=0}^ {i_4-1} A_{t+n-j}\right),
\end{split}
\end{equation}
where $C$ is a constant. Moreover, by using similar arguments we observe 
\begin{footnotesize}
\begin{equation*}
\e\left(\prod_{j=0}^ {i_1-1} A_{-j} \prod_{j=0}^ {i_2-1} A_{n-j} \prod_{j=0}^ {i_3-1} A_{t-j}\prod_{j=0}^ {i_4-1} A_{t+n-j}\right) \leq \e\left(\prod_{j=0}^ {i_1-1} A_{-j} \prod_{j=0}^ {i_2-1} A_{-j} \prod_{j=0}^ {i_3-1} A_{-j}\prod_{j=0}^ {i_4-1} A_{-j}\right).
\end{equation*}
\end{footnotesize}
Combining all the estimates above, it thus suffices to prove that 
\begin{equation*}
\begin{split}
&\sum_{i_1=0}^ \infty  \sum_{i_2=0}^ \infty  \sum_{i_3=0}^ \infty  \sum_{i_4=0}^ \infty\e\left(\prod_{j=0}^ {i_1-1} A_{-j} \prod_{j=0}^ {i_2-1} A_{-j} \prod_{j=0}^ {i_3-1} A_{-j}\prod_{j=0}^ {i_4-1} A_{-j}\right)\\ 
&\leq4!  \sum_{i_4=0}^ \infty  \sum_{i_3=0}^{i_4}  \sum_{i_2=0}^{i_3}  \sum_{i_1=0}^{i_2}\e\left(\prod_{j=0}^ {i_1-1} A_{-j} \prod_{j=0}^ {i_2-1} A_{-j} \prod_{j=0}^ {i_3-1} A_{-j}\prod_{j=0}^ {i_4-1} A_{-j}\right) < \infty.
\end{split}
\end{equation*}
Now for $i_1\leq i_2 \leq i_3 \leq i_4$ we have

\begin{equation*}
\e\left(\prod_{j=0}^ {i_1-1} A_{-j} \prod_{j=0}^ {i_2-1} A_{-j} \prod_{j=0}^ {i_3-1} A_{-j}\prod_{j=0}^ {i_4-1} A_{-j}\right) = \alpha_1^ {i_1+i_2+i_3+i_4} \e(\epsilon_0^ 8)^ {i_1}\e(\epsilon_0^ 6)^ {i_2-i_1} \e(\epsilon_0^4)^{i_3-i_2}
\end{equation*}
which yields

\begin{equation*}
\begin{split}
&4!  \sum_{i_4=0}^ \infty  \sum_{i_3=0}^{i_4}  \sum_{i_2=0}^{i_3}  \sum_{i_1=0}^{i_2}\e\left(\prod_{j=0}^ {i_1-1} A_{-j} \prod_{j=0}^ {i_2-1} A_{-j} \prod_{j=0}^ {i_3-1} A_{-j}\prod_{j=0}^ {i_4-1} A_{-j}\right) \\
&= 4!  \sum_{i_4=0}^ \infty  \sum_{i_3=0}^{i_4}  \sum_{i_2=0}^{i_3}  \sum_{i_1=0}^{i_2} \alpha_1^ {i_1+i_2+i_3+i_4} \e(\epsilon_0^ 8)^ {i_1}\e(\epsilon_0^ 6)^ {i_2-i_1} \e(\epsilon_0^4)^{i_3-i_2}\\
&= 4! \sum_{i_4=0}^\infty  \alpha_1^{i_4} \sum_{i_3=0}^{i_4}  \left(\alpha_1 \e(\epsilon_0^ 4)\right)^ {i_3}     \sum_{i_2=0}^{i_3}     \left(\alpha_1  \frac{\e(\epsilon_0^ 6)}{\e(\epsilon_0^ 4)}\right)^ {i_2}     \sum_{i_1=0}^{i_2} \left(\alpha_1 \frac{\e(\epsilon_0^ 8)}{\e(\epsilon_0^ 6)}\right)^{i_1}.
\end{split}
\end{equation*}
Denote 

\begin{equation*}
a_1 = \alpha_1 \frac{\e(\epsilon_0^8)}{\e(\epsilon_0^6)}, \quad a_2 =  \alpha_1 \frac{\e(\epsilon_0^6)}{\e(\epsilon_0^4)} \quad\text{and}\quad a_3 = \alpha_1 \e(\epsilon_0^4). 
\end{equation*}
Then we need to show that
\begin{equation}
\label{quadruple}
\begin{split}
\sum_{i_4=0}^\infty  \alpha_1^{i_4} \sum_{i_3=0}^{i_4}  a_3^ {i_3}     \sum_{i_2=0}^{i_3}     a_2^ {i_2}     \sum_{i_1=0}^{i_2} a_1^{i_1} < \infty.
\end{split}
\end{equation} 
For this suppose first that $1 \notin S \coloneqq \{a_1, a_2, a_3, a_1a_2, a_2a_3, a_1a_2a_3\}.$ Then we are able to use geometric sums to obtain 

\begin{equation*}
 \sum_{i_1=0}^{i_2} a_1^{i_1} = \frac{1-a_1^{i_2+1}}{1-a_1}\qquad\text{for }a_1\neq 1.
\end{equation*}
Continuing like this in the iterated sums in \eqref{quadruple} we deduce 

\begin{equation*}
\begin{split}
\sum_{i_2=0}^{i_3} a_2^{i_2} (1-a_1^{i_2+1}) &= \sum_{i_2=0}^{i_3} a_2^{i_2} - a_1\sum_{i_2=0}^{i_3} (a_1a_2)^{i_2} = \frac{1-a_2^{i_3+1}}{1-a_2} - a_1\frac{1-(a_1a_2)^{i_3+1}}{1-a_1a_2},
\end{split}
\end{equation*}

\begin{equation*}
\sum_{i_3=0}^{i_4} a_3^{i_3} (1-a_2^{i_3+1}) = \frac{1-a_3^{i_4+1}}{1-a_3} - a_2\frac{1-(a_2a_3)^{i_4+1}}{1-a_2a_3},
\end{equation*}
and

\begin{equation*}
\sum_{i_3=0}^{i_4} a_3^{i_3} (1- (a_1a_2)^{i_3+1}) = \frac{1-a_3^{i_4+1}}{1-a_3} - a_1a_2\frac{1-(a_1a_2a_3)^{i_4+1}}{1-a_1a_2a_3}.
\end{equation*}
Consequently, it suffices that the following three series converge 
\begin{equation*}
\sum_{i_4=0}^\infty \alpha_1^{i_4}a_3^{i_4+1}, \quad \sum_{i_4=0}^\infty \alpha_1^{i_4} (a_2a_3)^{i_4+1}\quad\text{and}\quad \sum_{i_4=0}^\infty \alpha_1^{i_4} (a_1a_2a_3)^{i_4+1}
\end{equation*}
yielding constraints

\begin{equation*}
\alpha_1 < \frac{1}{\sqrt{\e(\epsilon_0^4)}}, \quad \alpha_1 < \frac{1}{\e(\epsilon_0^6)^{\frac{1}{3}}} \quad\text{and}\quad \alpha_1 <\frac{1}{\e(\epsilon_0^8)^{\frac{1}{4}}}.
\end{equation*}
However, these follow from the assumption $\alpha_1 <\frac{1}{\e(\epsilon_0^8)^{\frac{1}{4}}}$. Finally, 
if $1 \in S$ it simply suffices to replace $a_1,a_2, a_3$ with
\begin{equation*}
\tilde{a}_1 = \alpha_1 \left(\frac{\e(\epsilon_0^8)}{\e(\epsilon_0^6)}+\delta\right), \quad \tilde{a}_2 =  \alpha_1 \left(\frac{\e(\epsilon_0^6)}{\e(\epsilon_0^4)}+\delta\right) \quad\text{and}\quad \tilde{a}_3 = \alpha_1 \left(\e(\epsilon_0^4)+\delta\right) 
\end{equation*}
such that 
$$
1 \notin \{\tilde{a}_1, \tilde{a}_2, \tilde{a}_3, \tilde{a}_1\tilde{a}_2, \tilde{a}_2\tilde{a}_3, \tilde{a}_1\tilde{a}_2\tilde{a}_3\}.
$$
Choosing $\delta<0$ small enough the claim follows from the fact that the inequality $\alpha_1 <\frac{1}{\e(\epsilon_0^8)^{\frac{1}{4}}}$ is strict.\\
\textbf{Step 2: computing the limit of \eqref{whatsthelimit}.}\\
By step 1 we can apply dominated convergence theorem in \eqref{whatsthelimit}. For this let us analyze the limit behaviour of \eqref{twoterms}. For the latter term we use \eqref{latterterm}. By assumptions, we have e.g. the following identities:

\begin{align*}
\lim_{t\to\infty} \e(L_{t-i_3}L_{t+n-i_4}) &=1\\ 
\lim_{t\to\infty} \e(L_{-i_1} L_{t-i_3}L_{t+n-i_4}) &= f(n+i_3-i_4)\\
\lim_{t\to\infty} \e(L_{-i_1}L_{n-i_2}L_{t-i_3}L_{t+n-i_4}) &= f(n+i_1-i_2)f(n+i_3-i_4).
\end{align*} 
Therefore the limit of the latter term of \eqref{twoterms} is given by

\begin{scriptsize}
\begin{equation*}
\begin{split}
\lim_{t\to\infty}\e\left(B_{-i_1} B_{n-i_2} B_{t-i_3} B_{t+n-i_4}\right) &= \alpha_0^ 4 + 4\alpha_0^ 3l_1 + \alpha_0^ 2l_1^ 2\big(4+ f(n+i_1-i_2)+f(n+i_3-i_4)\big)\\
& +\alpha_0l_1^ 3\big(f(n+i_1-i_2) + f(n+i_3-i_4) + f(n+i_1-i_2)\\
&+ f(n+i_3-i_4)\big) +l_1^4f(n+i_1-i_2)f(n+i_3-i_4)\\
&=  (\alpha_0^ 2 + 2\alpha_0l_1+l_1^ 2f(n+i_1-i_2))(\alpha_0^2+2\alpha_0l_1+l_1^ 2f(n+i_3-i_4))
\end{split}
\end{equation*}
\end{scriptsize}
The first term of \eqref{twoterms} can be divided into two independent parts whenever $t$ is large enough. More precisely, for $t > \max \{n+i_3, i_4\}$, we have

\begin{equation*}
\begin{split}
&\e\left(\epsilon_0^ 2\epsilon_n^ 2\epsilon_t^ 2\epsilon_{t+n}^ 2\prod_{j=0}^ {i_1-1} A_{-1-j} \prod_{j=0}^ {i_2-1} A_{n-1-j} \prod_{j=0}^ {i_3-1} A_{t-1-j}\prod_{j=0}^ {i_4-1} A_{t+n-1-j}\right)\\ 
=&  \e\left(\epsilon_0^ 2\epsilon_n^ 2 \prod_{j=0}^ {i_1-1} A_{-1-j} \prod_{j=0}^ {i_2-1} A_{n-1-j}\right)\e\left(\epsilon_t^ 2\epsilon_{t+n}^ 2 \prod_{j=0}^ {i_3-1} A_{t-1-j} \prod_{j=0}^ {i_4-1} A_{t+n-1-j}\right)\\ 
=& \e\left(\epsilon_0^ 2\epsilon_n^ 2 \prod_{j=0}^ {i_1-1} A_{-1-j} \prod_{j=0}^ {i_2-1} A_{n-1-j}\right)\e\left(\epsilon_0^ 2\epsilon_{n}^ 2 \prod_{j=0}^ {i_3-1} A_{-1-j} \prod_{j=0}^ {i_4-1} A_{n-1-j}\right),
\end{split}
\end{equation*}
where the last equality follows from stationarity of $A_t$. Hence

\begin{footnotesize}
\begin{equation*}
\begin{split}
&\lim_{t\to\infty} \e(X_0^ 2X_n^ 2X_t^ 2X_{t+n}^ 2)\\
 =& \sum_{i_1=0}^ \infty  \sum_{i_2=0}^ \infty  \sum_{i_3=0}^ \infty  \sum_{i_4=0}^ \infty \e\left(\epsilon_0^ 2\epsilon_n^ 2 \prod_{j=0}^ {i_1-1} A_{-1-j} \prod_{j=0}^ {i_2-1} A_{n-1-j}\right)\e\left(\epsilon_0^ 2\epsilon_{n}^ 2 \prod_{j=0}^ {i_3-1} A_{-1-j} \prod_{j=0}^ {i_4-1} A_{n-1-j}\right)\cdot\\
& (\alpha_0^ 2 + 2\alpha_0l_1+l_1^ 2f(n+i_1-i_2))(\alpha_0^2+2\alpha_0l_1+l_1^ 2f(n+i_3-i_4)).
\end{split}
\end{equation*}
\end{footnotesize}
On the other hand, by \eqref{uniquesolution}

\begin{footnotesize}
\begin{equation*}
\begin{split}
\e(X_0^ 2X_n^ 2) &= \sum_{i_1 = 0}^\infty \sum_{i_2=0}^ \infty \e\left(\epsilon_0^ 2 \epsilon_n^ 2\prod_{j=0}^ {i_1-1}A_{-1-j} \prod_{j=0}^{i_2-1} A_{n-1-j}\right) \e\left((\alpha_0+l_1L_{-1-i_1})(\alpha_0 + l_1L_{n-1-i_2})\right)\\
&= \sum_{i_1 = 0}^\infty \sum_{i_2=0}^ \infty \e\left(\epsilon_0^ 2 \epsilon_n^ 2\prod_{j=0}^ {i_1-1}A_{-1-j} \prod_{j=0}^{i_2-1} A_{n-1-j}\right) (\alpha_0^ 2 + 2\alpha_0l_1 + l_1^ 2f(n+i_1-i_2)).
\end{split}
\end{equation*}
\end{footnotesize}
Consequently, we conclude that 

\begin{equation*}
\lim_{t\to\infty} \e(X_0^ 2X_n^ 2X_t^ 2X_{t+n}^ 2) = \e(X_0^2 X_n^ 2)^ 2
\end{equation*}
proving the claim.
\end{proof}
\end{lemma}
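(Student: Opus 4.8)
The plan is to apply the law-of-large-numbers variant of Lemma \ref{lawoflarge} to the stationary sequence $U_t = X_t^2 X_{t+n}^2$. Since $(X_t^2)_{t\in\mathbb{Z}}$ is stationary, the $U_t$ have the common mean $\e(X_0^2 X_n^2)$, and by stationarity their variance is constant and their covariance depends only on the lag; thus it suffices to check that $\mathrm{Var}(U_0)<\infty$ and that $\mathrm{Cov}(U_0,U_t)\to 0$ as $t\to\infty$. The variance is finite because $\e(U_0^2)\le \e(X_0^8)=\e(\sigma_0^8)\e(\epsilon_0^8)$ by Cauchy--Schwarz and independence, and an eighth-moment analogue of Lemma \ref{thefourthmoment} shows $\e(\sigma_0^8)<\infty$ exactly under $\alpha_1<\e(\epsilon_0^8)^{-1/4}$. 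The substantive part is therefore the covariance decay, which is equivalent to $\e(X_0^2 X_n^2 X_t^2 X_{t+n}^2)\to \e(X_0^2 X_n^2)^2$.

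To analyse this mixed moment I would expand each factor $X_s^2=\sigma_s^2\epsilon_s^2$ via the series \eqref{uniquesolution}, obtaining a quadruple sum over $i_1,i_2,i_3,i_4$. As all summands are nonnegative, Tonelli moves the expectation inside each term, and independence of $(\epsilon_t)$ and $(L_t)$ factorises it into an $\epsilon$-expectation (the four products of the $A_t=\alpha_1\epsilon_t^2$ together with the four stray $\epsilon_s^2$) times a $B$-expectation (a fourth mixed moment of the $B_t=\alpha_0+l_1 L_t$). The strategy is to pass to the limit $t\to\infty$ inside the quadruple sum by dominated convergence and then evaluate.

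I expect the construction of the summable dominating bound to be the main obstacle, and this is where the eighth-moment hypothesis is used. Bounding the $B$-expectations by a constant via repeated Cauchy--Schwarz and $\e(L_0^4)<\infty$ is routine. For the $\epsilon$-expectation one must estimate uniformly in $t$: the idea is to peel off the four stray $\epsilon_s^2$ factors one at a time along the moment chain $1\le\sqrt{\e(\epsilon_0^4)}\le\e(\epsilon_0^6)^{1/3}\le\e(\epsilon_0^8)^{1/4}$, each absorbed into the neighbouring $A$-product so that a single factor is raised to power at most eight; after collapsing the four time-blocks to a common one by stationarity, one is left with a quadruple series of pure $A$-products. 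Ordering $i_1\le i_2\le i_3\le i_4$ at the cost of a factor $4!$ reduces this to nested geometric sums, whose convergence yields the three constraints $\alpha_1<\e(\epsilon_0^4)^{-1/2}$, $\alpha_1<\e(\epsilon_0^6)^{-1/3}$, $\alpha_1<\e(\epsilon_0^8)^{-1/4}$, all implied by the hypothesis. The degenerate case in which a ratio product equals $1$ (so a geometric sum collapses) is handled by perturbing the ratios slightly, using that the assumed inequality is strict.

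With dominated convergence justified, the final step identifies the limit. For $t$ large the time points split into two well-separated blocks $\{0,n\}$ and $\{t,t+n\}$, so the $\epsilon$-expectation factorises exactly, and by stationarity the two blocks are equal. Meanwhile the decay assumptions $\mathrm{cov}(L_0,L_t)\to0$, $\mathrm{cov}(L_0L_n,L_{\pm t})\to0$ and $\mathrm{cov}(L_0L_{n_1},L_tL_{t+n_2})\to0$ force the limiting $B$-expectation to factorise as $(\alpha_0^2+2\alpha_0 l_1+l_1^2 f(n+i_1-i_2))(\alpha_0^2+2\alpha_0 l_1+l_1^2 f(n+i_3-i_4))$. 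Matching the resulting product of sums against the series expansion of $\e(X_0^2 X_n^2)$ gives $\e(X_0^2 X_n^2 X_t^2 X_{t+n}^2)\to\e(X_0^2 X_n^2)^2$, i.e.\ $\mathrm{Cov}(U_0,U_t)\to0$, and Lemma \ref{lawoflarge} then yields the claimed convergence in probability.
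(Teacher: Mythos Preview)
Your proposal is correct and follows essentially the same route as the paper's proof: reduce to Lemma~\ref{lawoflarge}, expand via \eqref{uniquesolution} into a quadruple sum, factorise by independence of $\epsilon$ and $L$, build a $t$-uniform summable majorant by peeling off the stray $\epsilon_s^2$ along the moment chain and evaluating nested geometric sums (with the perturbation trick for the degenerate case), then pass to the limit termwise and match against the series for $\e(X_0^2X_n^2)$. The only minor difference is that you verify $\mathrm{Var}(U_0)<\infty$ separately via an eighth-moment analogue of Lemma~\ref{thefourthmoment}, whereas the paper obtains this implicitly from the summable bound in its Step~1.
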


\begin{rem}
The assumptions of Lemma \ref{lemma:consistency2} cohere with the assumptions of Lemma \ref{lemma:consistency1}. Moreover, the assumptions made related to convergence of covariances are very natural. Indeed, we only assume that the (linear) dependencies within the process $L_t$ vanish over time. Examples of $L$ satisfying the required assumptions can be found in Section \ref{examples}.
\end{rem}

\subsection{Estimation of the model parameters}
Set, for $N\geq 1$,

\begin{equation*}
\hat{\mu}_{2,N} = \frac{1}{N}\sum_{t=1}^{N} X^4_t
\end{equation*}
and

\begin{equation*}
g_0(\bo_0) = b_0(\bo_0)^2 - 4a_0(\bo_0)c_0(\bo_0),
\end{equation*}
where $a_0(\bo_0)$, $b_0(\bo_0)$ and $c_0(\bo_0)$ are as in \eqref{quadraticterms3}. In addition, let 
\begin{equation*}
\bohh = [\ha(n+1), \ha(n), \ha(n-1), \ha(1), \ha(0), \hat{\mu}_{2,N}]
\end{equation*}
and $\hat{\bxi}_{0, N} = [\bohh, \hat{\mu}_N]$ for some fixed $n\neq 0$. The following estimators are motivated by \eqref{alphaofgamma2}, \eqref{lofgamma2} and \eqref{alpha0ofgamma2}. 

\begin{defi}\label{def1}
We define estimators $\hat{\alpha}_1$, $\hat{l}_1$ and $\hat{\alpha}_0$ for the model parameters $\alpha_1$, $l_1$ and $\alpha_0$ respectively through

\begin{equation}
\label{haaalpha1}
\hat{\alpha}_1 = \alpha_1(\bohh)=  \frac{-b_0(\bohh) \pm \sqrt{g_0(\bohh)}}{2a_0(\bohh)},
\end{equation}

\begin{footnotesize}
\begin{equation}
\label{haal1}
\hat{l}_1=l_1(\bohh) = \sqrt{\frac{1}{s(0)}\left(\alpha_1(\bohh)^2 \ha(0) - 2\alpha_1(\bohh) \ha(1)+ \ha(0) - \frac{\hat{\mu}_{2,N} Var(\epsilon_0^2)}{\e(\epsilon_0^4)} \right)}
\end{equation}
\end{footnotesize}
and

\begin{equation}
\label{haaalpha0}
\hat{\alpha}_0 = \alpha_0(\hat{\bxi}_{0, N}) = \hat{\mu}_N(1-\alpha_1(\bohh)) - l_1(\bohh),
\end{equation}
where $n \neq 0$.

\end{defi}

\begin{theorem}
\label{theo:consistency}
Assume that $a_0(\bo_0) \neq 0$ and $g_0(\bo_0) > 0$. Let the assumptions of Lemma \ref{lemma:consistency2} prevail. Then $\hat{\alpha}_1, \hat{l}_1$ and $\hat{\alpha}_0$ given by \eqref{haaalpha1}, \eqref{haal1} and \eqref{haaalpha0} are consistent.
\end{theorem}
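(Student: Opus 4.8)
The plan is to write each of the three estimators as one fixed continuous function of a finite vector of empirical moments, to show that this vector converges in probability to its deterministic population value, and then to conclude by the continuous mapping theorem. The relevant building blocks are the sample mean $\hat{\mu}_N$, the empirical fourth moment $\hat{\mu}_{2,N}$, and the autocovariance estimators $\ha(m)$ for $m\in\{n-1,n,n+1,0,1\}$; throughout, the quantities $s(0),s(n),\mathrm{Var}(\epsilon_0^2),\e(\epsilon_0^4)$ are treated as known deterministic constants, so that $a_0,b_0,c_0$ of \eqref{quadraticterms3} are genuine functions of $\bohh$ alone. Once I establish $\hat{\bxi}_{0,N}\to[\bo_0,\mu]$ in probability, consistency of $\hat{\alpha}_1,\hat{l}_1,\hat{\alpha}_0$ will follow provided the defining maps are continuous at that limit.

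First I would prove convergence of the building blocks. The convergence $\hat{\mu}_N\to\e(X_0^2)$ in probability is exactly Lemma \ref{lemma:consistency1}, and specializing Lemma \ref{lemma:consistency2} to $n=0$ gives $\hat{\mu}_{2,N}=\frac1N\sum_{t=1}^N X_t^4\to\e(X_0^4)$ in probability. For the autocovariances I would expand
\begin{equation*}
\ha(m)=\frac1N\sum_{t=1}^{N-m}X_t^2X_{t+m}^2-\bar{X^2}\cdot\frac1N\sum_{t=1}^{N-m}(X_t^2+X_{t+m}^2)+\frac{N-m}{N}(\bar{X^2})^2.
\end{equation*}
By Lemma \ref{lemma:consistency2} the first term tends in probability to $\e(X_0^2X_m^2)$ (the mismatch between the normalizations $\frac1N$ and $\frac1{N-m}$ is harmless since $\frac{N-m}{N}\to1$), while the remaining terms tend to $2\e(X_0^2)^2$ and $\e(X_0^2)^2$ by Lemma \ref{lemma:consistency1} and Slutsky. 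Hence $\ha(m)\to\e(X_0^2X_m^2)-\e(X_0^2)^2=\gamma(m)$ in probability for each fixed $m$. Since componentwise convergence in probability implies joint convergence in probability, I obtain $\bohh\to\bo_0$ and $\hat{\bxi}_{0,N}\to[\bo_0,\mu]$.

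It then remains to verify continuity of the maps at the limit. The coefficients $a_0,b_0,c_0$ are polynomials in the entries of $\bohh$, hence continuous, and so is the discriminant $g_0=b_0^2-4a_0c_0$. The only nonlinear operations in $\alpha_1(\cdot)$ are the square root of $g_0$ and the division by $2a_0$; both are continuous at $\bo_0$ precisely because of the standing hypotheses $g_0(\bo_0)>0$ and $a_0(\bo_0)\neq0$, which keep the radicand bounded away from $0$ and the denominator bounded away from $0$ on a neighbourhood. The continuous mapping theorem then gives $\hat{\alpha}_1\to\alpha_1$. For $\hat{l}_1=l_1(\bohh)$ I would check that the argument of the outer square root in \eqref{haal1} converges to $s(0)^{-1}\big(\alpha_1^2\gamma(0)-2\alpha_1\gamma(1)+\gamma(0)-\e(X_0^4)\mathrm{Var}(\epsilon_0^2)/\e(\epsilon_0^4)\big)=l_1^2>0$, so that root is continuous at the limit (using $s(0)>0$), giving $\hat{l}_1\to l_1$; finally $\hat{\alpha}_0=\hat{\mu}_N(1-\hat{\alpha}_1)-\hat{l}_1$ is a continuous function of already-convergent quantities, so $\hat{\alpha}_0\to\mu(1-\alpha_1)-l_1=\alpha_0$ by \eqref{alpha0ofgamma2}.

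The one point needing care beyond routine continuity is the $\pm$ ambiguity in \eqref{haaalpha1}: consistency holds only when the sign is chosen so that $\alpha_1(\bo_0)$ equals the true $\alpha_1$. Once that branch is fixed, the two roots are separated by the gap $\sqrt{g_0(\bo_0)}/a_0(\bo_0)\neq0$, so on a neighbourhood of $\bo_0$ the selected branch is single-valued and continuous and the argument above applies verbatim. I would appeal to Remark \ref{rem:sign} to justify that the correct branch can be identified from additional knowledge of the covariance structure, ensuring that the chosen estimator indeed targets the true parameter. I expect this branch-selection issue to be the main conceptual obstacle, the convergence of the building blocks being essentially a corollary of the already-established Lemmas \ref{lemma:consistency1} and \ref{lemma:consistency2}.
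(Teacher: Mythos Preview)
Your proposal is correct and follows essentially the same approach as the paper: establish consistency of the empirical building blocks via Lemmas \ref{lemma:consistency1} and \ref{lemma:consistency2} (noting the former's hypotheses are implied by the latter's), then apply the continuous mapping theorem. The paper compresses all of this into two sentences, whereas you spell out the expansion of $\ha(m)$, the continuity verification at $\bo_0$ under the hypotheses $a_0(\bo_0)\neq0$ and $g_0(\bo_0)>0$, and the branch-selection issue---all of which are the details the paper leaves implicit.
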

\begin{proof}
Since the assumptions of Lemma \ref{lemma:consistency2} are satisfied, so are the assumptions of Lemma \ref{lemma:consistency1} implying that the autocovariance estimators, the mean and the second moment estimator of $X_t^2$ are consistent. The claim follows from the continuous mapping theorem.
\end{proof}

Let us denote

\begin{equation*}
g(\bo) = b(\bo)^2 - 4a(\bo)^2,
\end{equation*}
where $a(\bo)$ and $b(\bo)$ are as in \eqref{quadraticterms2}. In addition, let 
\begin{equation*}
\boh = [\ha(n_1+1), \ha(n_2+1), \ha(n_1), \ha(n_2), \ha(n_1-1), \ha(n_2-1)]
\end{equation*}
and $\hat{\bxi}_N = [\boh, \hat{\mu}_N]$ for some fixed $n_1,n_2\neq 0$ with $n_1 \neq n_2$. 
The following estimators are motivated by \eqref{alphaofgamma}, \eqref{lofgamma} and \eqref{alpha0ofgamma}. 

\begin{defi}\label{def2}
We define estimators $\hat{\alpha}_1$, $\hat{l}_1$ and $\hat{\alpha}_0$ for the model parameters $\alpha_1$, $l_1$ and $\alpha_0$ respectively through

\begin{equation}
\label{haalpha1}
\hat{\alpha}_1 = \alpha_1(\boh)=  \frac{-b(\boh) \pm \sqrt{g(\boh)}}{2a(\boh)},
\end{equation}

\begin{footnotesize}
\begin{equation}
\label{hal1}
\hat{l}_1=l_1(\boh) = \sqrt{\frac{\alpha_1^2(\boh)\ha(n_2) - \alpha_1(\boh)(\ha(n_2+1) + \ha(n_2-1)) + \ha(n_2)}{s(n_2)}}
\end{equation}
\end{footnotesize}
and

\begin{equation}
\label{haalpha0}
\hat{\alpha}_0 = \alpha_0(\hat{\bxi}_N) = \hat{\mu}_N(1-\alpha_1(\boh)) - l_1(\boh),
\end{equation}
where $n_1, n_2\neq 0$ and $n_1\neq n_2$.

\end{defi}

\begin{theorem}
\label{theo:consistency2}
Assume that $s(n_2) \neq 0, a(\bo) \neq 0$ and $g(\bo) > 0$. Let the assumptions of Lemma \ref{lemma:consistency2} prevail. Then $\hat{\alpha}_1, \hat{l}_1$ and $\hat{\alpha}_0$ given by \eqref{haalpha1}, \eqref{hal1} and \eqref{haalpha0} are consistent.
\end{theorem}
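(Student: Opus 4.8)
The plan is to reduce Theorem \ref{theo:consistency2} to the consistency results already established for the empirical autocovariances, exactly as is done for Theorem \ref{theo:consistency}. The estimators $\hat{\alpha}_1, \hat{l}_1, \hat{\alpha}_0$ in Definition \ref{def2} are obtained by plugging the estimators $\ha(n_1\pm 1), \ha(n_1), \ha(n_2\pm 1), \ha(n_2)$ and $\hat{\mu}_N$ into the fixed deterministic functions $\alpha_1(\cdot), l_1(\cdot), \alpha_0(\cdot)$ defined in \eqref{alphaofgamma}, \eqref{lofgamma} and \eqref{alpha0ofgamma}. Thus the whole argument is a continuous mapping theorem argument, and the only thing to verify is that the relevant maps are continuous at the true parameter vector $\bo$ (together with $\mu$).

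Let me think about what I need. First I would invoke Lemma \ref{lemma:consistency2}: since its assumptions are assumed to hold, its conclusion gives that $\frac{1}{N-n}\sum_{t=1}^{N-n} X_t^2 X_{t+n}^2 \to \e(X_0^2X_n^2)$ in probability for every $n$, and as noted in the remark following it, its hypotheses imply those of Lemma \ref{lemma:consistency1}, so $\hat{\mu}_N \to \e(X_0^2)$ in probability as well. Combining these two facts through the identity $\hat{\gamma}_N(n) = \frac{1}{N}\sum_{t=1}^{N-n}X_t^2 X_{t+n}^2 - \text{(terms built from } \hat{\mu}_N)$ yields $\ha(n) \to \gamma(n)$ in probability for each fixed $n$, by Slutsky's theorem. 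Hence the random vector $[\boh, \hat{\mu}_N]$ converges in probability to the deterministic vector $[\bo, \mu]$.

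Next I would apply the continuous mapping theorem. The function $\alpha_1(\cdot)$ in \eqref{alphaofgamma} is a composition of continuous operations (sums, products, a square root, and a division by $2a(\bo)$), so it is continuous at $\bo$ provided the denominator $a(\bo)$ is nonzero and the quantity under the square root is nonnegative; these are precisely the standing assumptions $a(\bo) \neq 0$ and $g(\bo) = b(\bo)^2 - 4a(\bo)^2 > 0$. (The strict positivity of $g(\bo)$ guarantees the square root is continuous on a neighbourhood, the sign choice being locally constant, which is why we may keep the same branch in \eqref{haalpha1}.) Continuity of $l_1(\cdot)$ in \eqref{lofgamma} then follows similarly once we note $s(n_2)\neq 0$ and that the argument of its square root is a continuous image of $\alpha_1(\bo)$ and the $\gamma$-values; and $\alpha_0(\cdot)$ in \eqref{alpha0ofgamma} is manifestly continuous in $\alpha_1(\bo), l_1(\bo)$ and $\mu$. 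Therefore $\hat{\alpha}_1 \to \alpha_1, \hat{l}_1 \to l_1, \hat{\alpha}_0 \to \alpha_0$ in probability, which is the asserted consistency.

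The main obstacle, such as it is, lies not in the probabilistic convergence but in the continuity bookkeeping around the square roots and the denominators: one must check that the standing hypotheses $a(\bo)\neq 0$, $s(n_2)\neq 0$ and $g(\bo)>0$ really do place the limiting vector $[\bo,\mu]$ inside an open set on which $\alpha_1(\cdot)$, $l_1(\cdot)$ and $\alpha_0(\cdot)$ are jointly continuous, and that the radicand in $l_1(\cdot)$ is strictly positive there so that the correct branch is inherited by the estimator. This is exactly the role played by the analogous assumptions in Theorem \ref{theo:consistency}, so the proof is essentially a verbatim repetition of that argument with $\bohh$ replaced by $\boh$; I would simply record that the hypotheses guarantee continuity and then cite the continuous mapping theorem, mirroring the short proof already given for Theorem \ref{theo:consistency}.
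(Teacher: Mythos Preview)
Your proposal is correct and follows exactly the paper's approach: the paper's own proof of Theorem~\ref{theo:consistency2} is the single sentence ``The proof is basically the same as with Theorem~\ref{theo:consistency},'' and your argument spells out precisely that reduction---consistency of $\ha(\cdot)$ and $\hat{\mu}_N$ via Lemmas~\ref{lemma:consistency1} and~\ref{lemma:consistency2}, followed by the continuous mapping theorem using the hypotheses $a(\bo)\neq 0$, $g(\bo)>0$, $s(n_2)\neq 0$ to secure continuity at $\bo$.
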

\begin{proof}
The proof is basically the same as with Theorem \ref{theo:consistency}.
\end{proof}

\begin{rem}
\begin{itemize}
\item Statements of Theorems \ref{theo:consistency} and \ref{theo:consistency2} hold true also when $g_0(\bo_0) = 0$ and $g(\bo) = 0$, but in these cases the estimators do not necessarily become real valued as the sample size grows. In comparison, in \cite{vouti} the estimators were forced to be real by using indicator functions.

\item The estimators from Definitions \ref{def1} and \ref{def2} are of course related. In practice (see the next section) we use those from Definition \ref{def1} while those from Definition \ref{def2} are needed just in case when we need  more information in order to choose the correct sign for $\hat\alpha _1$, see Remark \ref{rem:sign}.

\item Note that here we implicitly assumed that the correct sign can be chosen in $\hat\alpha_1$. However, this is not a restriction as discussed.
\end{itemize}
\end{rem}

\subsection{Examples}
\label{examples}
We will present several examples of stationary processes for which  our main result stated in Theorem \ref{theo:consistency} apply. Our examples are constructed as 
$$L_{t}:= \left( X_{t+1}- X_{t}\right) ^{2}, \mbox{ for every } t\in \mathbb{Z} $$
where  $(X_{t})_{t\in \mathbb{R}}$  is a stochastic process with stationary increments. We discuss below the case when $X$ is a continuous Gaussian process (the fractional Brownian motion), a continuous non-Gaussian process (the Rosenblatt process), or a jump process (the compensated Poisson process). 

\subsubsection{The fractional Brownian motion}

Let $X_{t}: = B ^{H}_{t}$ for every $t\in \mathbb{R} $ where $(B ^{H}_{t}) _{t\in \mathbb{R}} $ is a two-sided fractional Brownian motion with Hurst parameter $H\in (0,1)$. Recall that $ B^{H} $ is a centered Gaussian process with covariance
\begin{equation*}
\mathbb{E} (B_{t}B_{s})=\frac{1}{2} (\vert t\vert ^{2H}+ \vert s\vert^{2H} -\vert t-s\vert ^{2H} ), \hskip0.3cm s,t \in \mathbb{R}.
\end{equation*}
Let us verify that the conditions from Lemma \ref{lemma:consistency2} and Theorem \ref{theo:consistency} are satisfied by $L_{t}= (B ^{H}_{t+1}- B ^{H}_{t}) ^{2}$. First, notice that (see Lemma 2 in \cite{BTT}) that for $t\geq 1$  
\begin{equation*}
Cov( L_{0}, L_{t})=\mathbb{E}\left( (B^{H}_{1} ) ^{2} (B ^{H}_{t+1} -B ^{H}_{t} ) ^{2}  \right) -1= 2(r_{H}(t))^2 
\end{equation*} 
with
\begin{equation}
\label{rh}
r_{H}(t)= \frac{1}{2} \left[ (t+1) ^{2H}+(t-1) ^{2H}-2t ^{2H}\right]\to _{t\to \infty}0
\end{equation}
since $r_{H}(t) $ behaves as $t^{2H-2}$ for $t$ large.

Let us now turn to the third-order condition, i.e. $Cov (L_{0}L_{n}, L_{t})= \mathbb{E} (L_{0} L_{n} L_{t} )-\mathbb{E} (L_{0} L_{n}  ) \to 0$ as $t\to \infty$. We can suppose $n\geq 1$ is fixed and $t>n$. 

For any three centered Gaussian random variables $X_{1}, X_{2}, X_{3}$  with unit variance we have $\mathbb{E} ( X_{1} ^{2} X_{2} ^{2}) = 1+2(\mathbb{E}(X_{1}X_{2}) ) ^{2} $ and 
\begin{eqnarray*}
\mathbb{E} ( X_{1} ^{2} X_{2} ^{2}X_{3} ^{2})&=& 2 \left( (\mathbb{E} (X_{1} X_{2}))^{2} +  (\mathbb{E} (X_{1} X_{3}))^{2} +(\mathbb{E} (X_{2} X_{3}))^{2} \right)\\
&+& 4 \mathbb{E} (X_{1} X_{2}))\mathbb{E} (X_{1} X_{3}))\mathbb{E} (X_{2} X_{3}))+1\\
&=&\mathbb{E} ( X_{1} ^{2} X_{2} ^{2}) + 2 \left(   (\mathbb{E} (X_{1} X_{3}))^{2} +(\mathbb{E} (X_{2} X_{3}))^{2} \right) \\
&+& 4 \mathbb{E} (X_{1} X_{2}))\mathbb{E} (X_{1} X_{3}))\mathbb{E} (X_{2} X_{3})).
\end{eqnarray*}
By applying this formula to $X_{1}= B ^{H}_{1}, X_{2}= B^H_{n+1}- B ^{H}_{n}, X_{3}= B ^{H}_{t+1} -B ^{H}_{t}$, we find
 \begin{equation*}
Cov (L_{0}L_{n}, L_{t})=2 r_{H}(t) ^{2} + 2r_{H}(t-n) ^{2}+4r_{H}(n)r_{H}(t) r_{H}(t-n)  
\end{equation*}
where $r_{H}$ is given by (\ref{rh}). By (\ref{rh}), the above expression converges to zero as $t\to \infty$.

Similarly for the fourth-order condition, the formulas are more complex but we can verify by standard calculations that, for every $n_{1}, n_{2}\geq 1$ and for every $t> \max (n_{1}, n_{2})$, the quantity 
$$ \mathbb{E}(L_{0} L_{n_{1}}L_{t}L_{t+n_{2}}) - \mathbb{E}(L_{0} L_{n_{1}}) \mathbb{E} (L_{t}L_{t+n_{2}}) $$
can be expressed as a polynomial (without term of degree zero) in $r_{H}(t), r_{H}(t-n_{1}), r_{H}(t+n_{2}), r_{H}(t+n_{2} -n_{1}) $ with coefficients depending on $n_{1}, n_{2}$. The conclusion is obtained by (\ref{rh}).

\subsubsection{The compensated Poisson process}

Let $(N_{t})_{t\in \mathbb{R} }$ be a Poisson process with intensity $\lambda =1$. Recall that $N$ is a cadlag adapted stochastic process, with independent increments, such that for every $s<t$, the random variable $N_{t}-N_{s}$ follows a Poisson distribution with parameter $t-s$. Define the compensated Poisson process $ (\tilde{N}_{t}) _{t\in \mathbb{R} } $ by $\tilde{N}_{t}=N_{t}-t$ for every $t\in \mathbb{R}$ and let $L_{t}= (N_{t+1}-N_{t})^ {2}$.  Clearly $\mathbb{E}L_{t}= 1$ for every $t$ and, by the independence of the increments of $\tilde{N}$, we have that for $t$ large enough 
$$Cov(L_{0}, L_{t})= Cov( L_{0}L_{n}, L_{t}) = Cov (L_{0}L_{n_{1}}, L_{t}L_{t+n_{2}})=0, $$ so the conditions in Theorem \ref{theo:consistency} are fulfilled.

\subsubsection{The Rosenblatt process} 
The (one-sided) Rosenblatt process $(Z^{H}_{t}) _{t\geq 0} $ is a self-similar stochastic process with stationary increments  and long memory  in the second Wiener chaos, i.e. it can be expressed as a multiple stochastic integral of order two with respect to the Wiener process. The Hurst parameter $H$ belongs to $(\frac{1}{2}, 1)$ and it characterizes the main properties of the process.  Its representation is 
$$ Z ^{H}_{t}=  \int_{\mathbb{R} }\int_{\mathbb{R}} f_{H}(y_{1}, y_{2}) dW(y_{1})dW(y_{2}) $$
 where $(W(y))_{y\in \mathbb{R}}$ is Wiener process and $f_{H} $ is  deterministic function such that\\ $\int_{\mathbb{R}} \int_{\mathbb{R}} f_{H}(y_{1}, y_{2}) ^{2} dy_{1}dy_{2} <\infty$. See e.g. \cite{T} for a more complete exposition on the Rosenblatt process. The two-sided Rosenblatt process has been introduced in \cite{Cou}. In particular, it has the same covariance as the fractional Brownian motion, so $\mathbb{E}(L_{t})= \mathbb{E}(Z^{H}_{t+1} -Z^{H}_{t}) ^{2}=1$ for every $t$. The use of the Rosenblatt process can be motivated by the presence of the long-memory in the emprical data for liquidity in financial markets, see \cite{Tsuji}.

The computation of the quantities $Cov(L_{0}, L_{t}), Cov (L_{0}L_{n}, L_{t})$ and $ Cov (L_{0} L_{n_{1}}, L_{t} L_{t+n_{2}})$  requires rather technical tools from stochastic analysis including properties of multiple integrals and product formula which we prefer to avoid here. We only mention that the term $Cov(L_{0}, L_{t})$ can be written as $P( r_{H}(t), r_{H,1}(t) ) $ where $P$ is a polynomial without term of degree zero, $r_{H}$ is given by (\ref{rh}), while 
$$r_{H,1}(t)= \int_{0} ^{1} \int_{0}^{1} \int_{t}^{t+1} \int_{t} ^{t+1} du_{1}du_{2}du_{3}du_{4} \vert u_{1}-u_{2}\vert ^{H-1}\vert u_{2}-u_{3} \vert ^{H-1}\vert u_{3}-u_{4}\vert ^{H-1}\vert u_{4}-u_{1}\vert ^{H-1}.$$
Note that
$$r_{H,1}(t) = \int_{[0,1] ^{4}} du_{1}du_{2}du_{3}du_{4} \vert u_{1}-u_{2}\vert ^{H-1}\vert u_{2}-u_{3}+t \vert ^{H-1}\vert u_{3}-u_{4}\vert ^{H-1}\vert u_{4}-u_{1}+t\vert ^{H-1}.  $$ 
Since $\vert u_{1}-u_{2}\vert ^{H-1}\vert u_{2}-u_{3}+t \vert ^{H-1}\vert u_{3}-u_{4}\vert ^{H-1}\vert u_{4}-u_{1}+t\vert ^{H-1}$ converges to zero as $t\to \infty$ for every $u_{i}$ and since  this  integrand is bounded for $t$ large by $\vert u_{1}-u_{2}\vert ^{H-1}\vert u_{2}-u_{3} \vert ^{H-1}\vert u_{3}-u_{4}\vert ^{H-1}\vert u_{4}-u_{1}\vert ^{H-1}$, which is integrable over $[0,1] ^{4}$, we obtain, via the dominated convergence theorem, that $Cov(L_{0}, L_{t})\to _{t\to \infty} 0.$
Similarly, the quantities $Cov (L_{0}L_{n}, L_{t})$ and $ Cov (L_{0} L_{n_{1}}, L_{t} L_{t+n_{2}})$ can be also expressed as polynomials (without constant terms) of $r_{H}, r_{H, k}$, $k=1,2,3, 4$ where $$r_{H,k}(t) = \int_{A_{1}\times ...A_{2k}}  du_{1}...du_{2k}  \vert u_{1}- u_{2}\vert ^{H-1}...\vert u_{2k-1}-u_{2k}\vert ^{H-1} \vert u_{2k}-u_{1}\vert ^{H-1}, $$ where at least one set $A_{i}$ is $(t, t+1)$. Thus we may apply a similar argument as above.

\section{Simulations}
This section provides some visual illustrations of convergence of the estimators \eqref{haaalpha1}, \eqref{haal1} and \eqref{haaalpha0} with respect to different liquidities $(L_t)_{t\in\mathbb{Z}}$. \\
The general setting throughout the simulations is the following. The IID process $(\epsilon_t)_{t\in\mathbb{Z}}$ is assumed to be a sequence of standard normals. In this case the restriction given by Lemma \ref{lemma:consistency2} reads $\alpha_1 < \frac{1}{105^{\frac{1}{4}}} \approx 0.31$. The lag used is $n=1$ and the true values of the model parameters are $\alpha_0=1$, $\alpha_1 = 0.1$ and $l_1 = 0.5$. The used sample sizes are $N=100, N=1000, N=10000$ and $N=100000$. The initial $X_0^2$ is set to a value $1.7$. After the processes $L_t$ with $t=0, 1,...N-2$ and $\epsilon_t$ with $t=1,2,...N-1$ are simulated, the initial is used to generate $\sigma_1^2$ using \eqref{arch}. Together with $\epsilon_1$ this gives $X_1^2$, after which \eqref{arch} yields the sample $\{X_0^2, X_1^2,..., X^2_{N-1}\}$.\\
In the first three subsections simulation results of the generalized ARCH process with liquidity given by $L_t = (B^H_{t+1} - B^H_{t})^2$ are presented. The used Hurst indices are $H=\frac{1}{3}, H=\frac{2}{3}$ and $H=\frac{4}{5}$. In the fourth subsection the liquidity process is given by $L_t = (\tilde{N}_{t+1} - \tilde{N}_t)^2$, where $N_t$ is a compensated Poisson process with $\lambda = 1$. 

In all subsections the sample size $N$ is varied, and each setting is repeated $1000$ times to provide histograms of the estimates. Our simulations show that the behaviour of the limit distributions is close to Gaussian one, as $N$ increases. We also note that, since the estimators involve square roots, they may produce complex valued estimates. However, asymptotically the estimates become real. Throughout the simulations the complex valued estimates have been simply removed, although the percentage of complex values is computed in each setting. Finally, some illustrative tables are given in Appendix \ref{sec:A_tables}.

\subsection{Fractional Brownian motion with $H=\frac{1}{3}$.}
Histograms of the estimates of the model parameters corresponding to $L_t = (B^H_{t+1} - B^H_{t})^2$ with $H = \frac{1}{3}$ are provided in Figures \ref{fig:H=1per3-N=100}, \ref{fig:H=1per3-N=1000}, \ref{fig:H=1per3-N=10000} and \ref{fig:H=1per3-N=100000}. The used sample sizes were $N=100, N=1000, N=10000$ and $N= 100000$. The sample sizes $N=100$ and $N=1000$ resulted complex valued estimates in $44.2\%$ and $3.5\%$ of the simulations respectively, whereas with the larger sample sizes all the estimates were real.

\begin{figure}[H]
\centering
  \begin{subfigure}[t]{0.32\textwidth}
   \includegraphics[width=\textwidth]{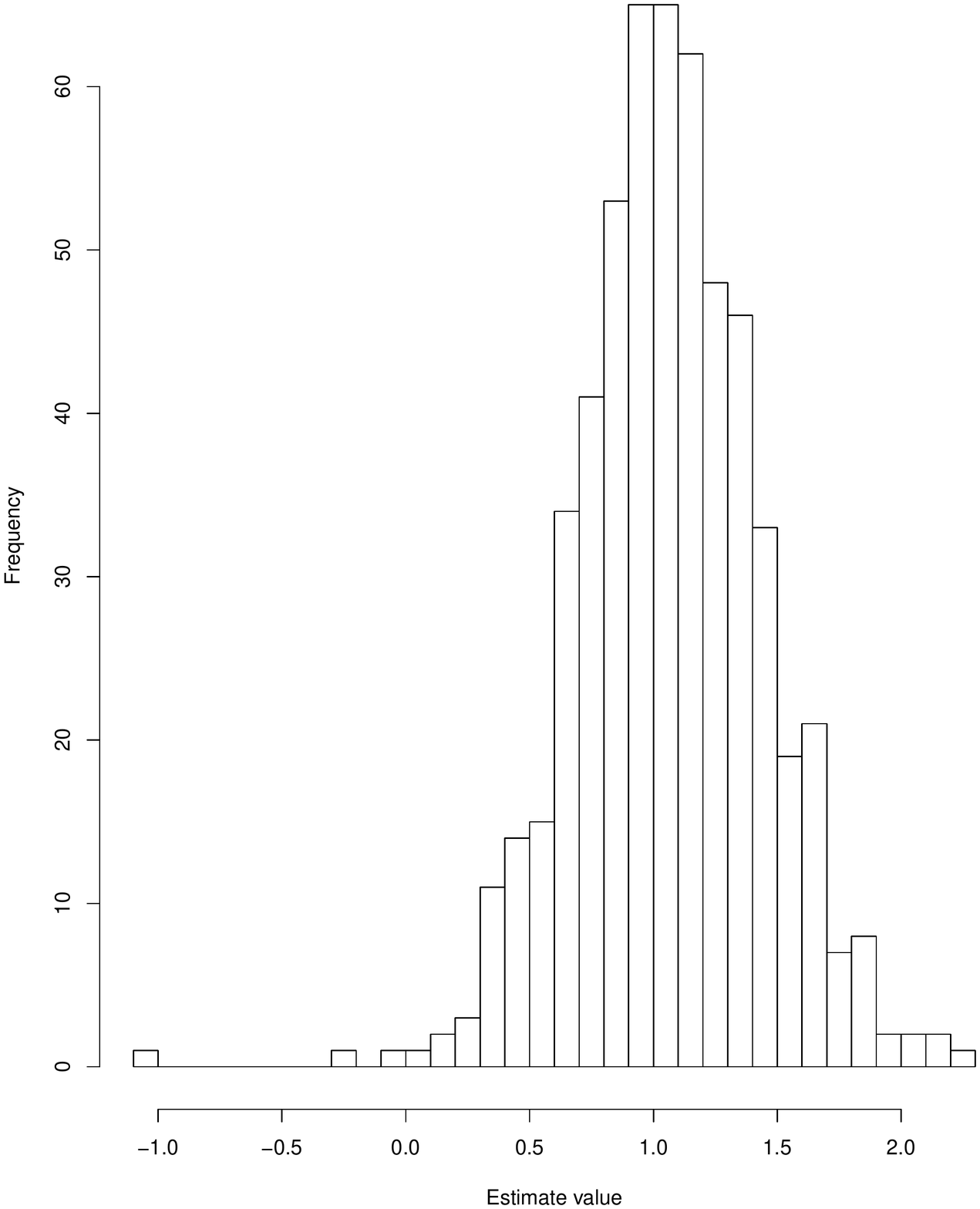}
    \caption{Estimates of $\alpha_0$.}
    \label{fig:H=1per3-N=100-alpha0}
  \end{subfigure}
 \begin{subfigure}[t]{0.32\textwidth}
    \includegraphics[width=\textwidth]{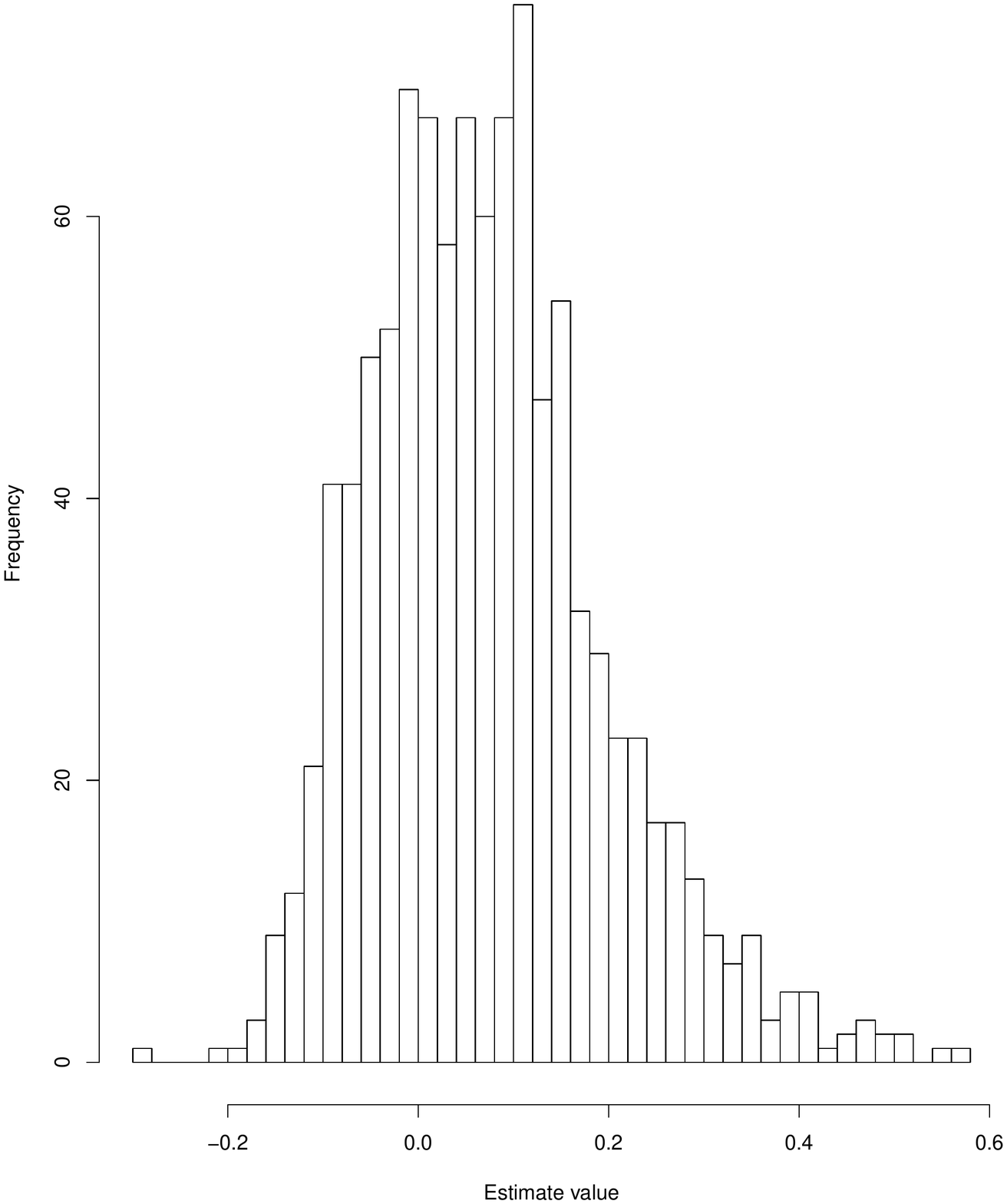}
    \caption{Estimates of $\alpha_1$.}
   \label{fig:H=1per3-N=100-alpha1}
 \end{subfigure}
 \begin{subfigure}[t]{0.32\textwidth}
    \includegraphics[width=\textwidth]{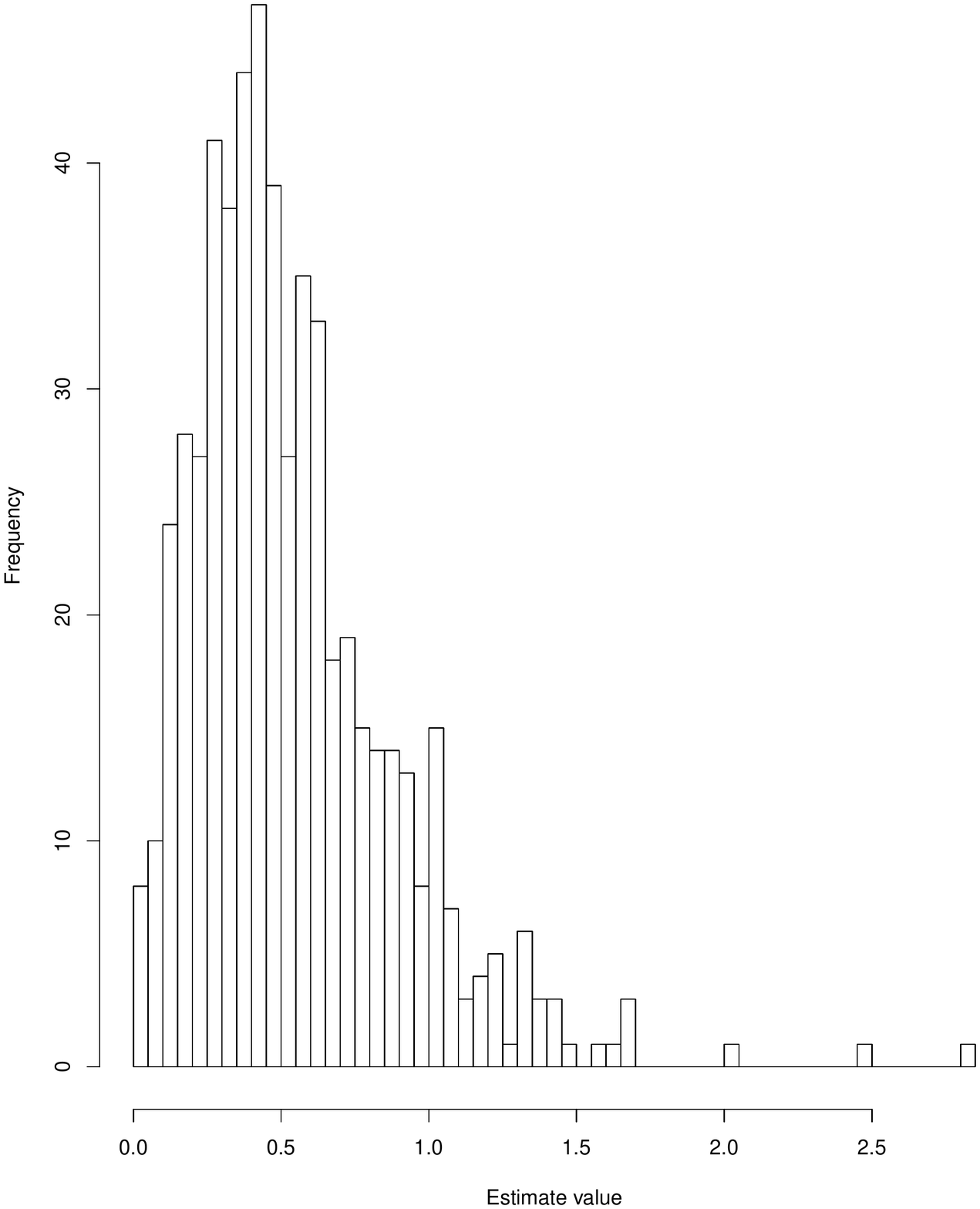}
    \caption{Estimates of $l_1$.}
   \label{fig:H=1per3-N=100-l1}
 \end{subfigure}
\caption{Fractional Brownian motion liquidity with $H=\frac{1}{3}$ and $N=100$.}
\label{fig:H=1per3-N=100}
\end{figure}

\begin{figure}[H]
\centering
  \begin{subfigure}[t]{0.32\textwidth}
   \includegraphics[width=\textwidth]{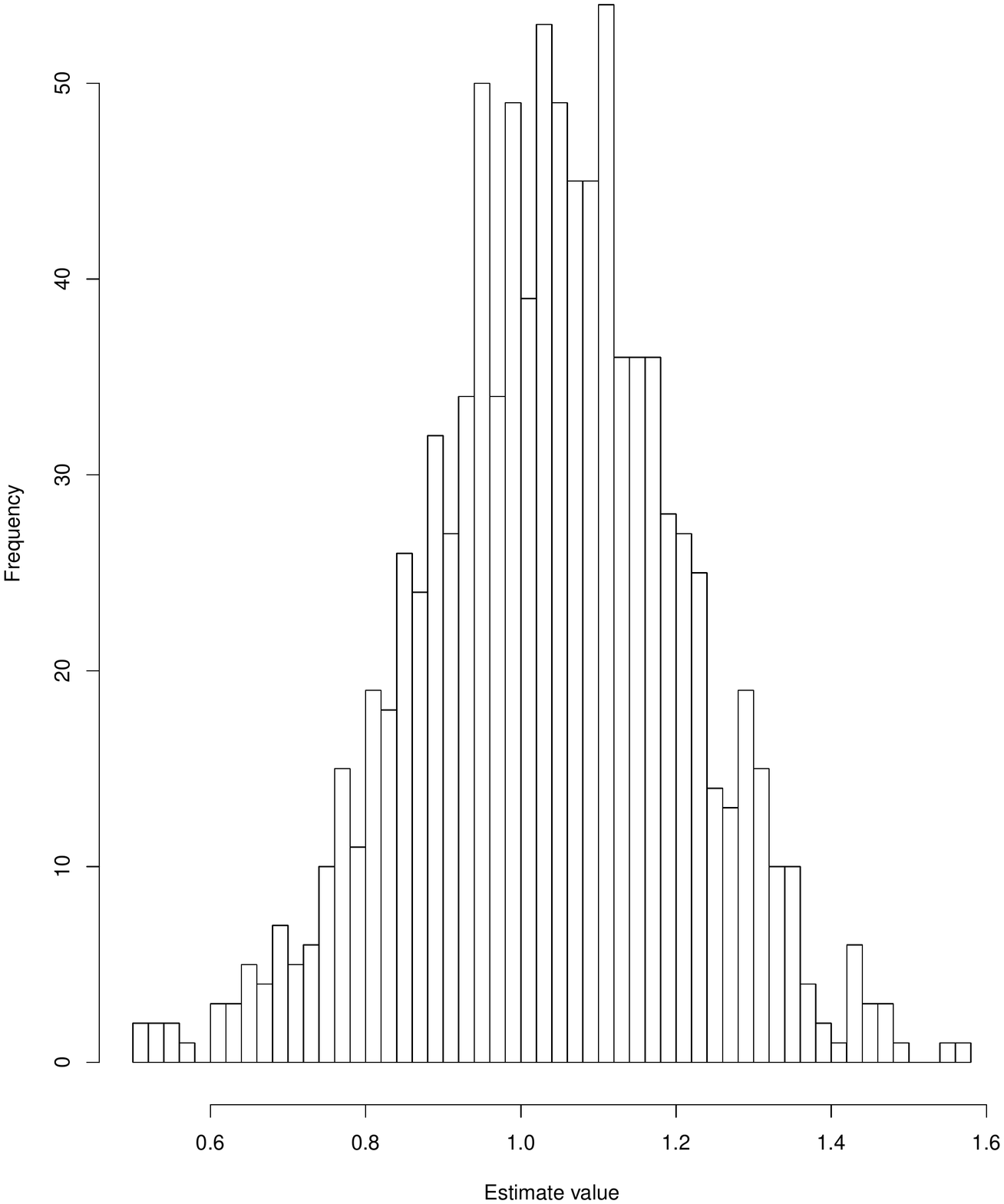}
    \caption{Estimates of $\alpha_0$.}
    \label{fig:H=1per3-N=1000-alpha0}
  \end{subfigure}
 \begin{subfigure}[t]{0.32\textwidth}
    \includegraphics[width=\textwidth]{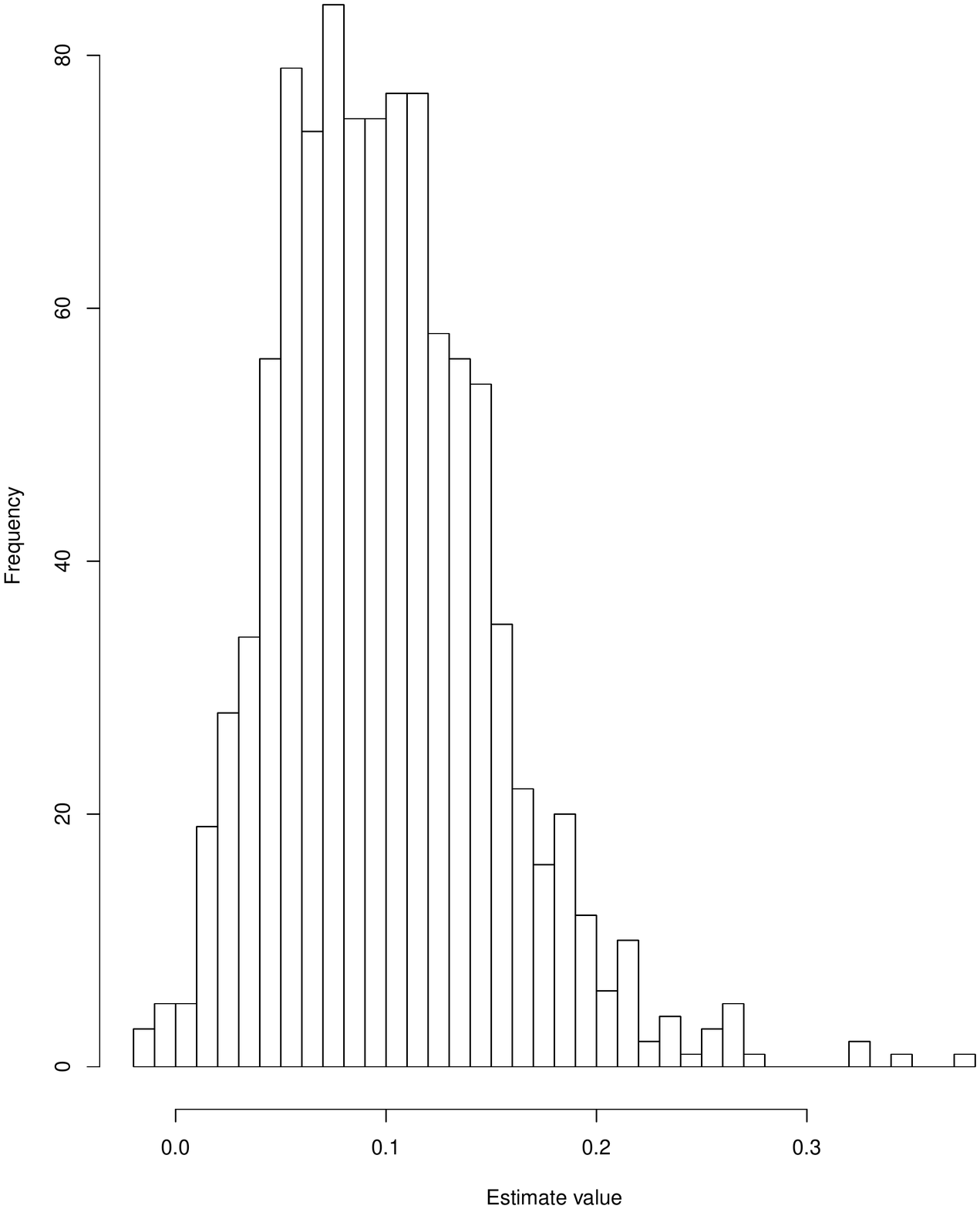}
    \caption{Estimates of $\alpha_1$.}
   \label{fig:H=1per3-N=1000-alpha1}
 \end{subfigure}
 \begin{subfigure}[t]{0.32\textwidth}
    \includegraphics[width=\textwidth]{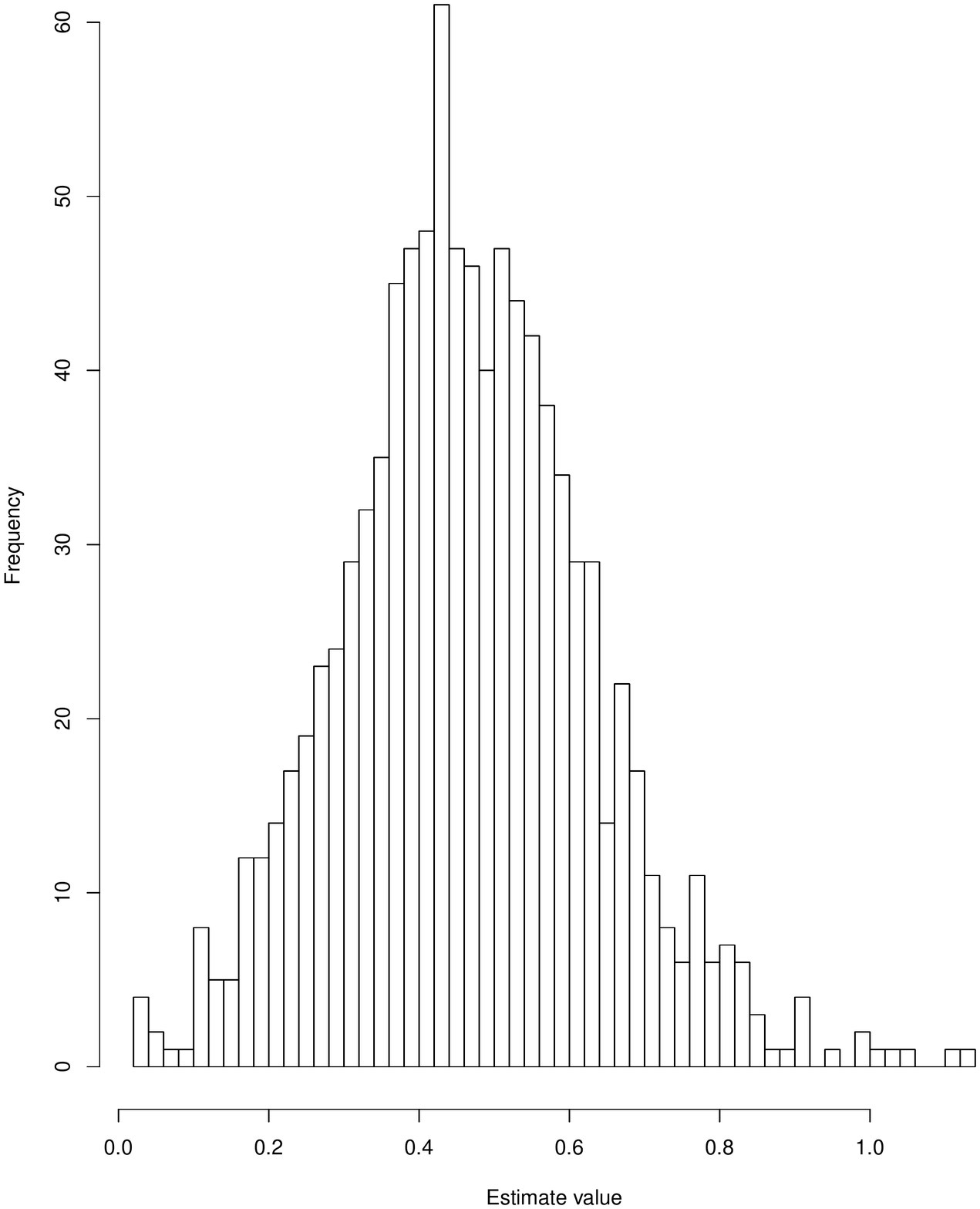}
    \caption{Estimates of $l_1$.}
   \label{fig:H=1per3-N=1000-l1}
 \end{subfigure}
\caption{Fractional Brownian motion liquidity with $H=\frac{1}{3}$ and $N=1000$.}
\label{fig:H=1per3-N=1000}
\end{figure}

\begin{figure}[H]
\centering
  \begin{subfigure}[t]{0.32\textwidth}
   \includegraphics[width=\textwidth]{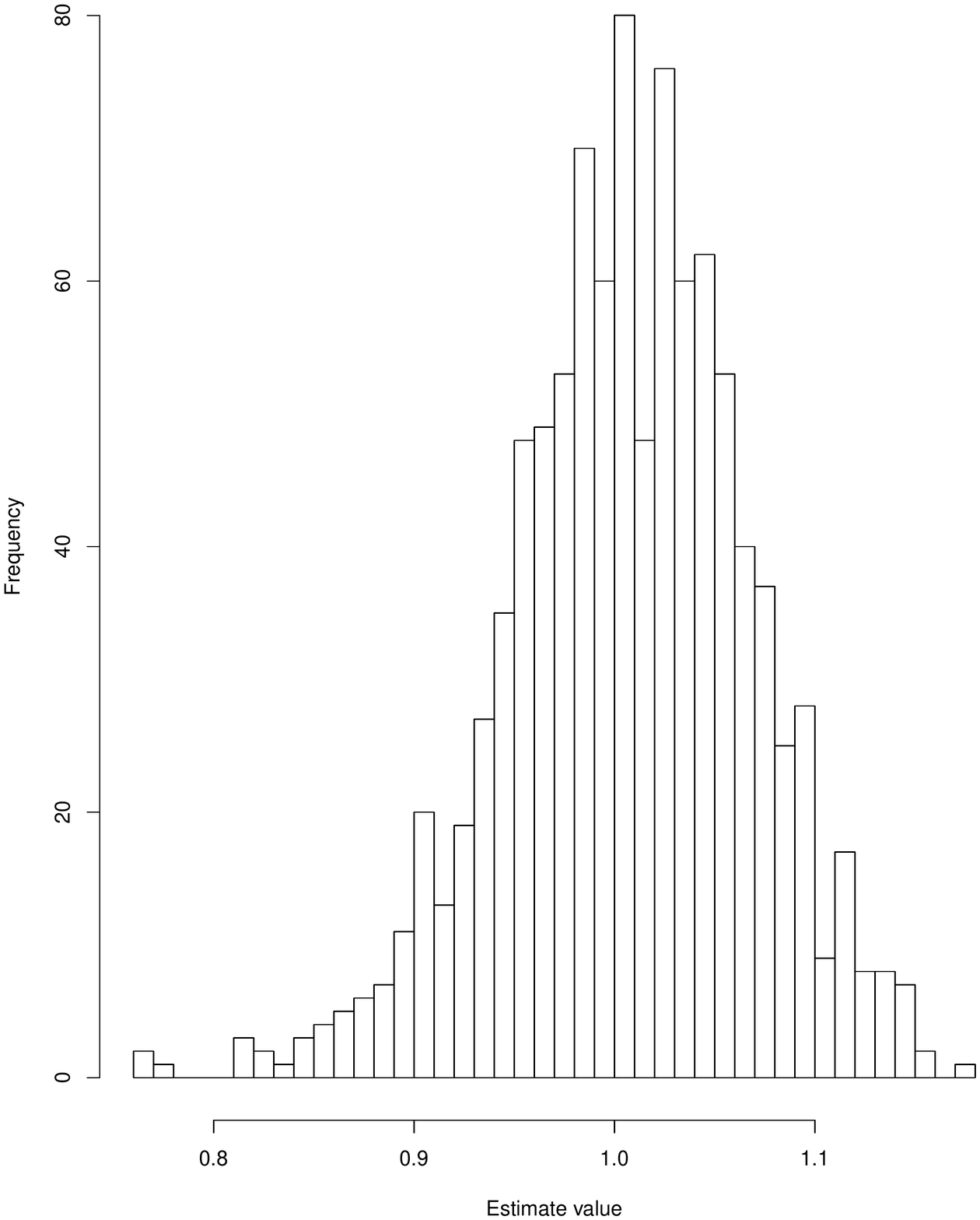}
    \caption{Estimates of $\alpha_0$.}
    \label{fig:H=1per3-N=10000-alpha0}
  \end{subfigure}
 \begin{subfigure}[t]{0.32\textwidth}
    \includegraphics[width=\textwidth]{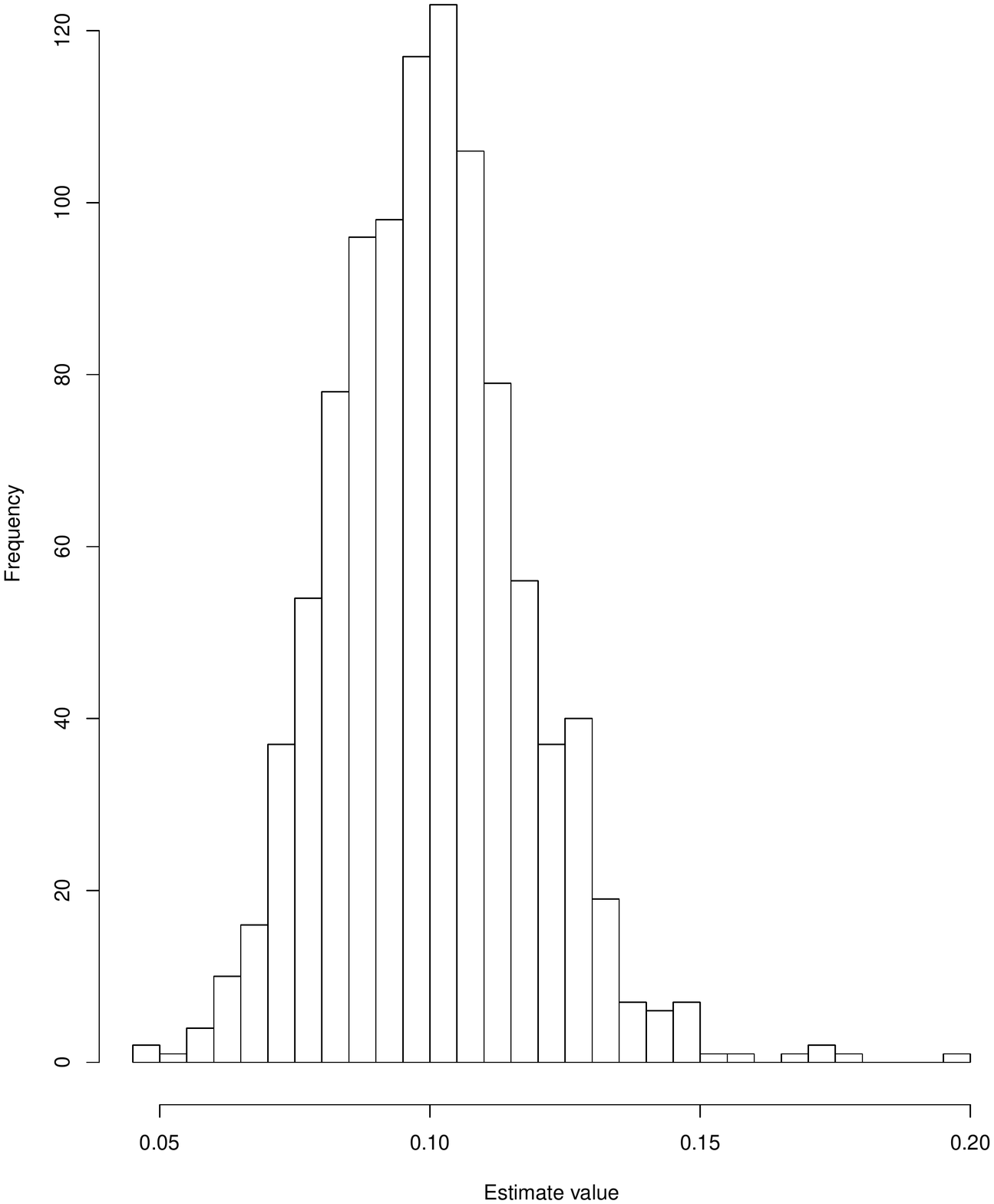}
    \caption{Estimates of $\alpha_1$.}
   \label{fig:H=1per3-N=10000-alpha1}
 \end{subfigure}
 \begin{subfigure}[t]{0.32\textwidth}
    \includegraphics[width=\textwidth]{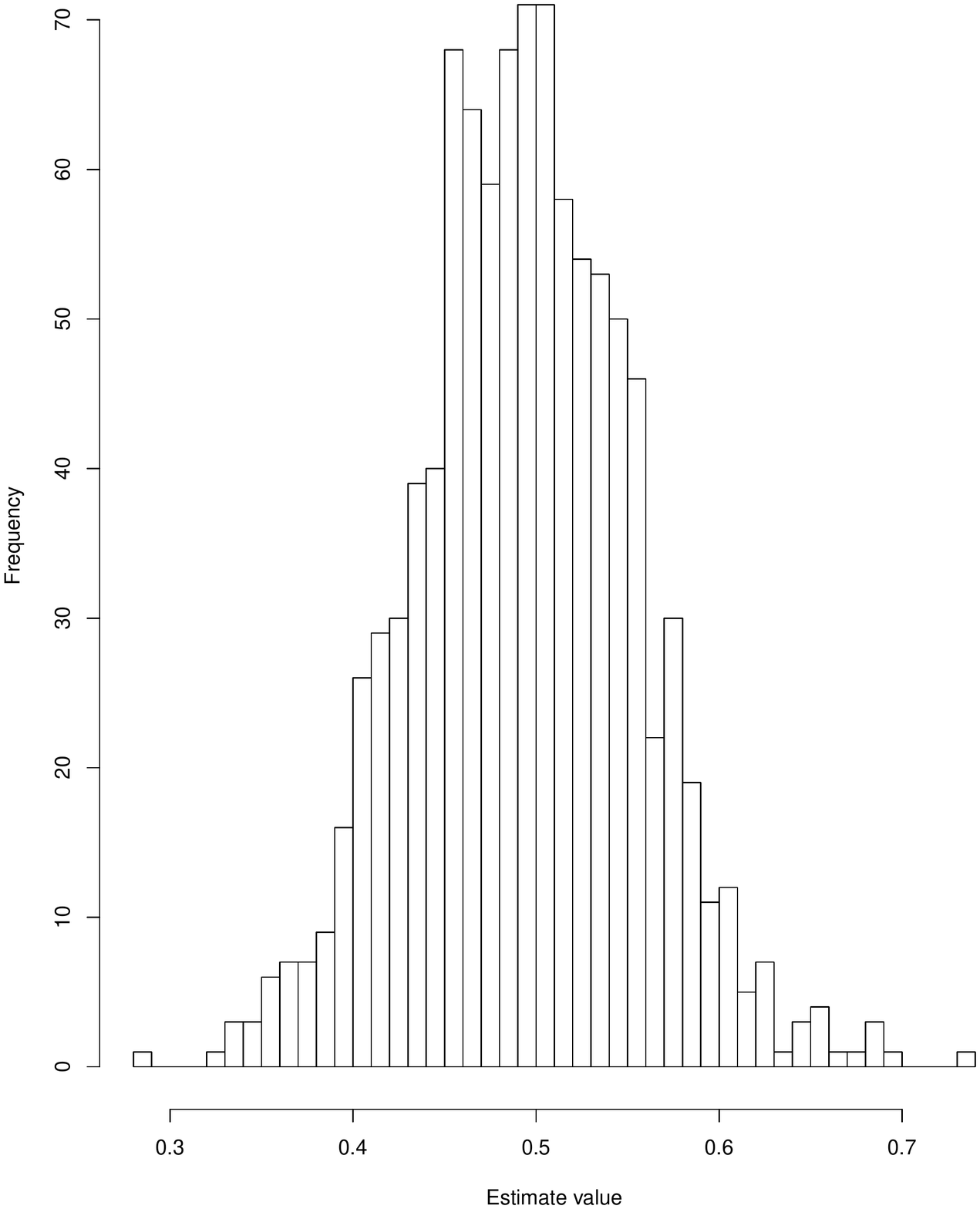}
    \caption{Estimates of $l_1$.}
   \label{fig:H=1per3-N=10000-l1}
 \end{subfigure}
\caption{Fractional Brownian motion liquidity with $H=\frac{1}{3}$ and $N=10000$.}
\label{fig:H=1per3-N=10000}
\end{figure}

\begin{figure}[H]
\centering
  \begin{subfigure}[t]{0.32\textwidth}
   \includegraphics[width=\textwidth]{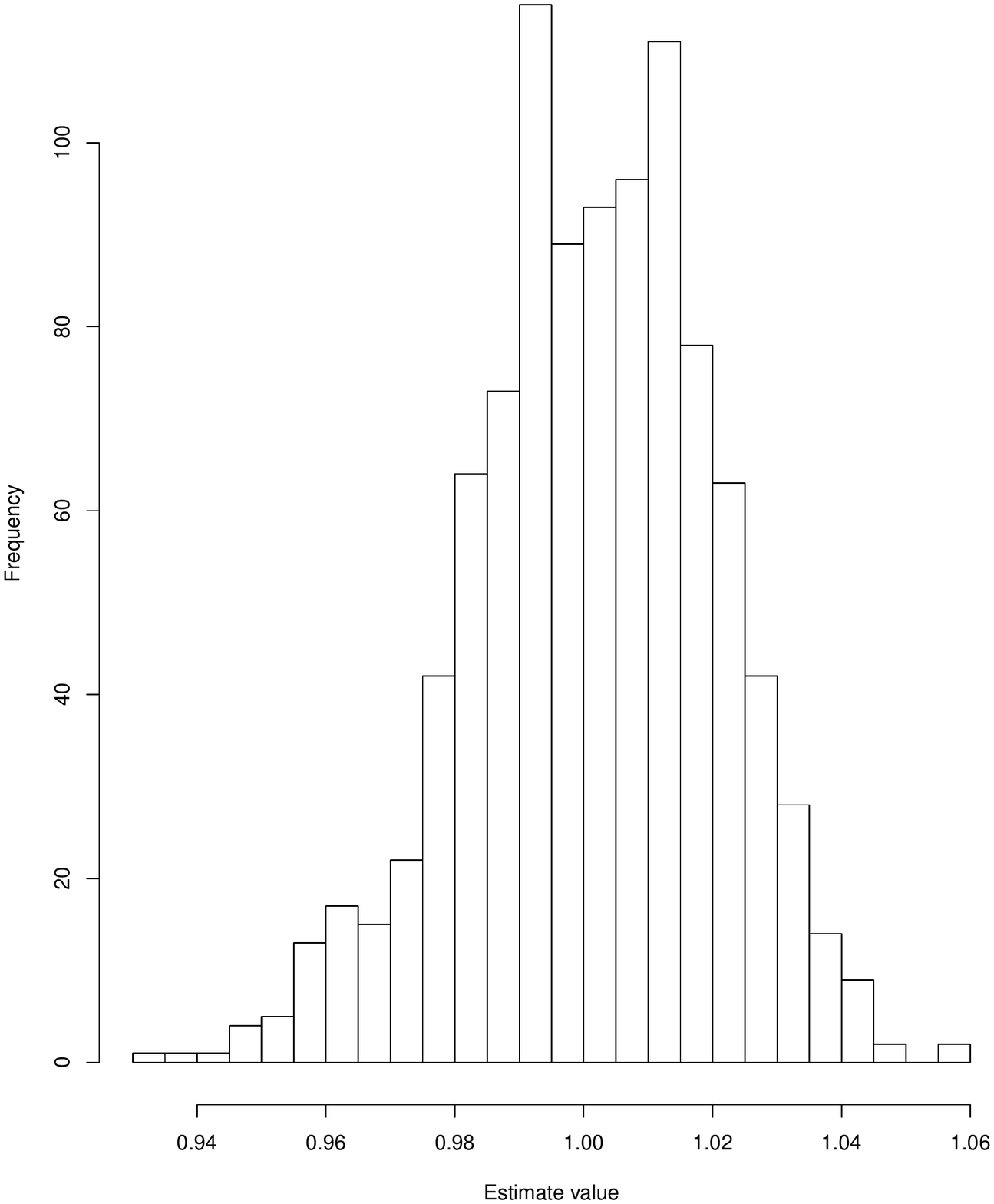}
    \caption{Estimates of $\alpha_0$.}
    \label{fig:H=1per3-N=100000-alpha0}
  \end{subfigure}
 \begin{subfigure}[t]{0.32\textwidth}
    \includegraphics[width=\textwidth]{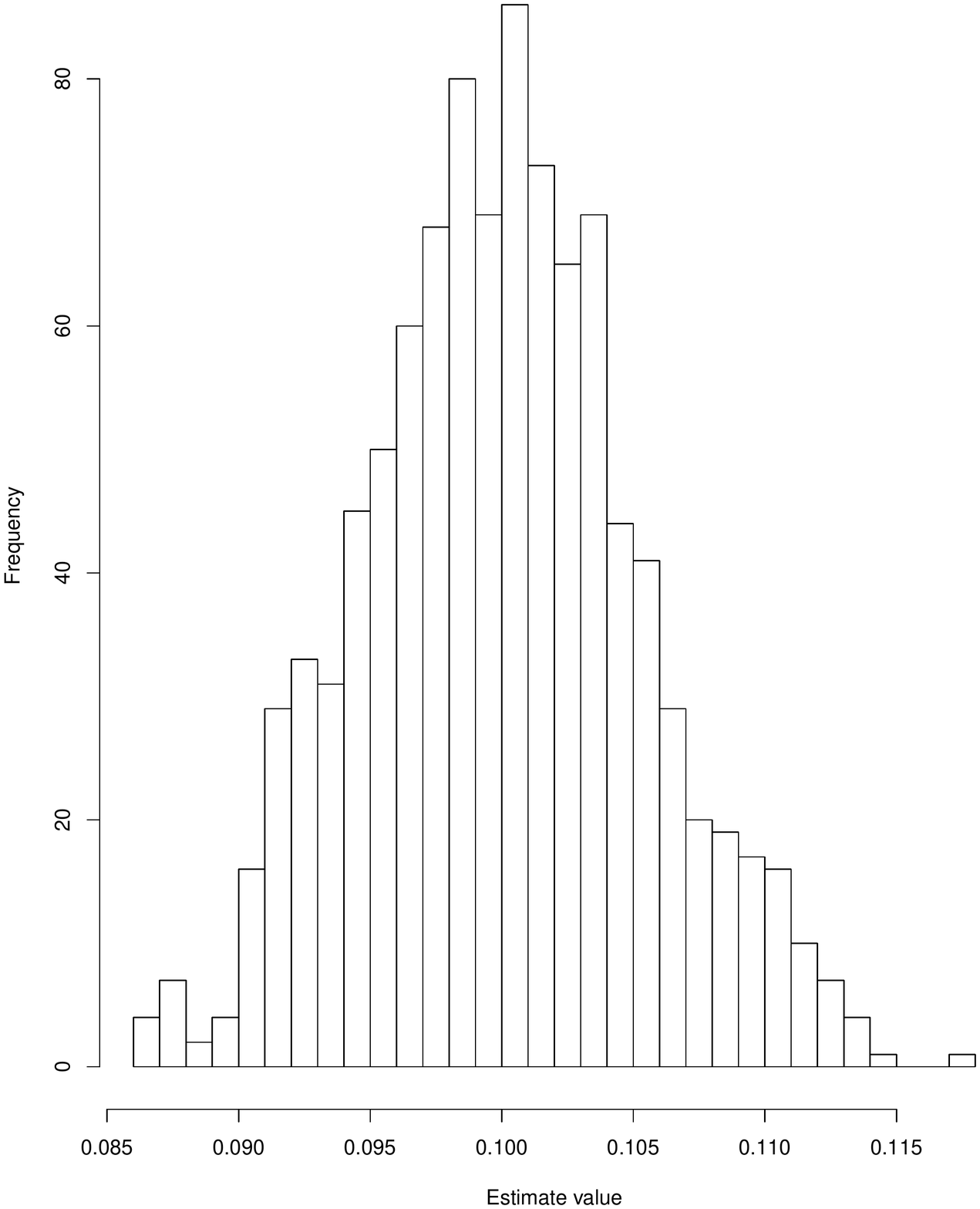}
    \caption{Estimates of $\alpha_1$.}
   \label{fig:H=1per3-N=100000-alpha1}
 \end{subfigure}
 \begin{subfigure}[t]{0.32\textwidth}
    \includegraphics[width=\textwidth]{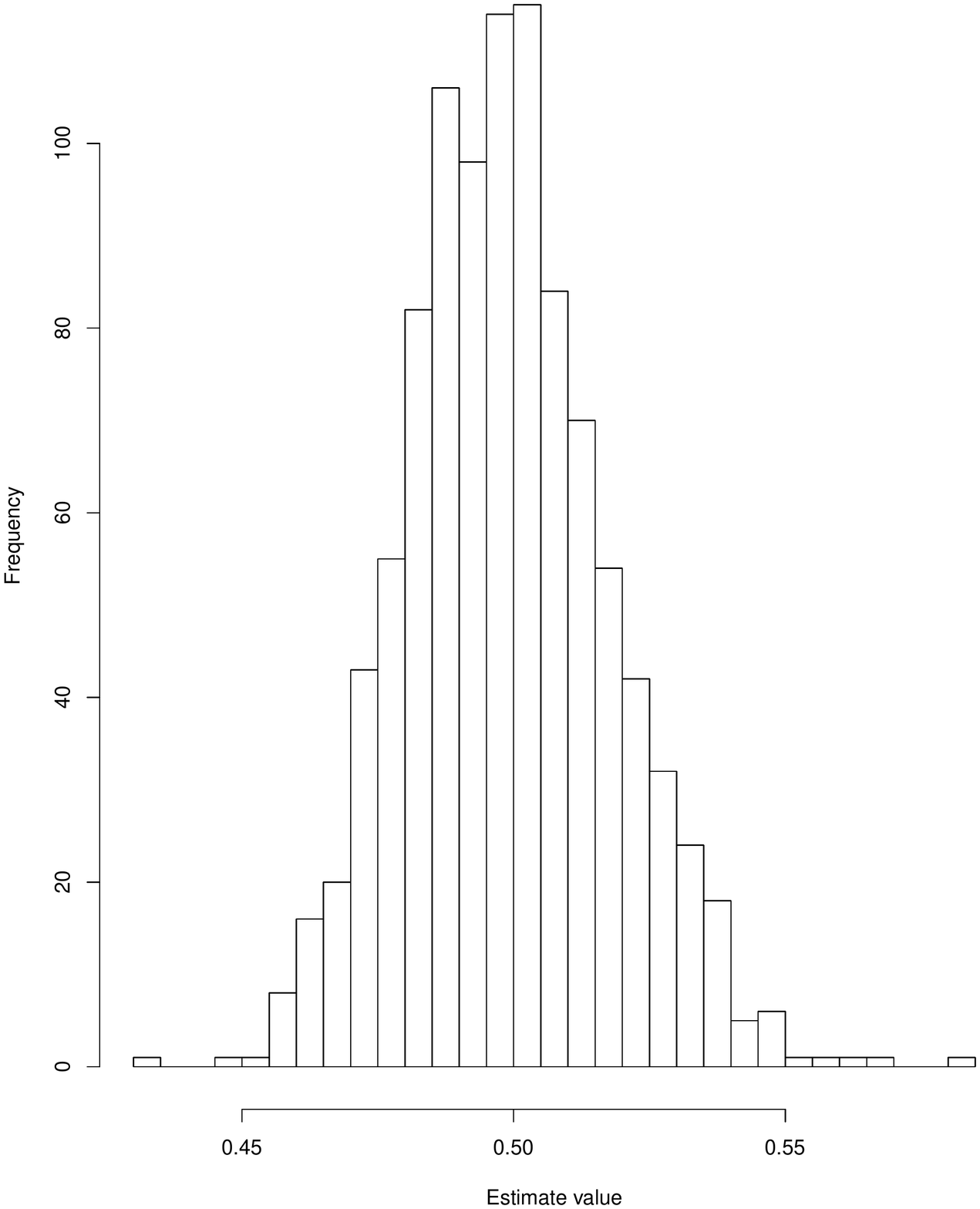}
    \caption{Estimates of $l_1$.}
   \label{fig:H=1per3-N=100000-l1}
 \end{subfigure}
\caption{Fractional Brownian motion liquidity with $H=\frac{1}{3}$ and $N=100000$.}
\label{fig:H=1per3-N=100000}
\end{figure}

\subsection{Fractional Brownian motion with $H=\frac{2}{3}$.}
Histograms of the estimates of the model parameters corresponding to $L_t = (B^H_{t+1} - B^H_{t})^2$ with $H = \frac{2}{3}$ are provided in Figures \ref{fig:H=2per3-N=100}, \ref{fig:H=2per3-N=1000}, \ref{fig:H=2per3-N=10000} and \ref{fig:H=2per3-N=100000}. The used sample sizes were $N=100, N=1000, N=10000$ and $N= 100000$. The sample sizes $N=100$ and $N=1000$ resulted complex valued estimates in $45.5\%$ and $2.9\%$ of the simulations respectively, whereas with the larger sample sizes all the estimates were real.

\begin{figure}[H]
\centering
  \begin{subfigure}[t]{0.32\textwidth}
   \includegraphics[width=\textwidth]{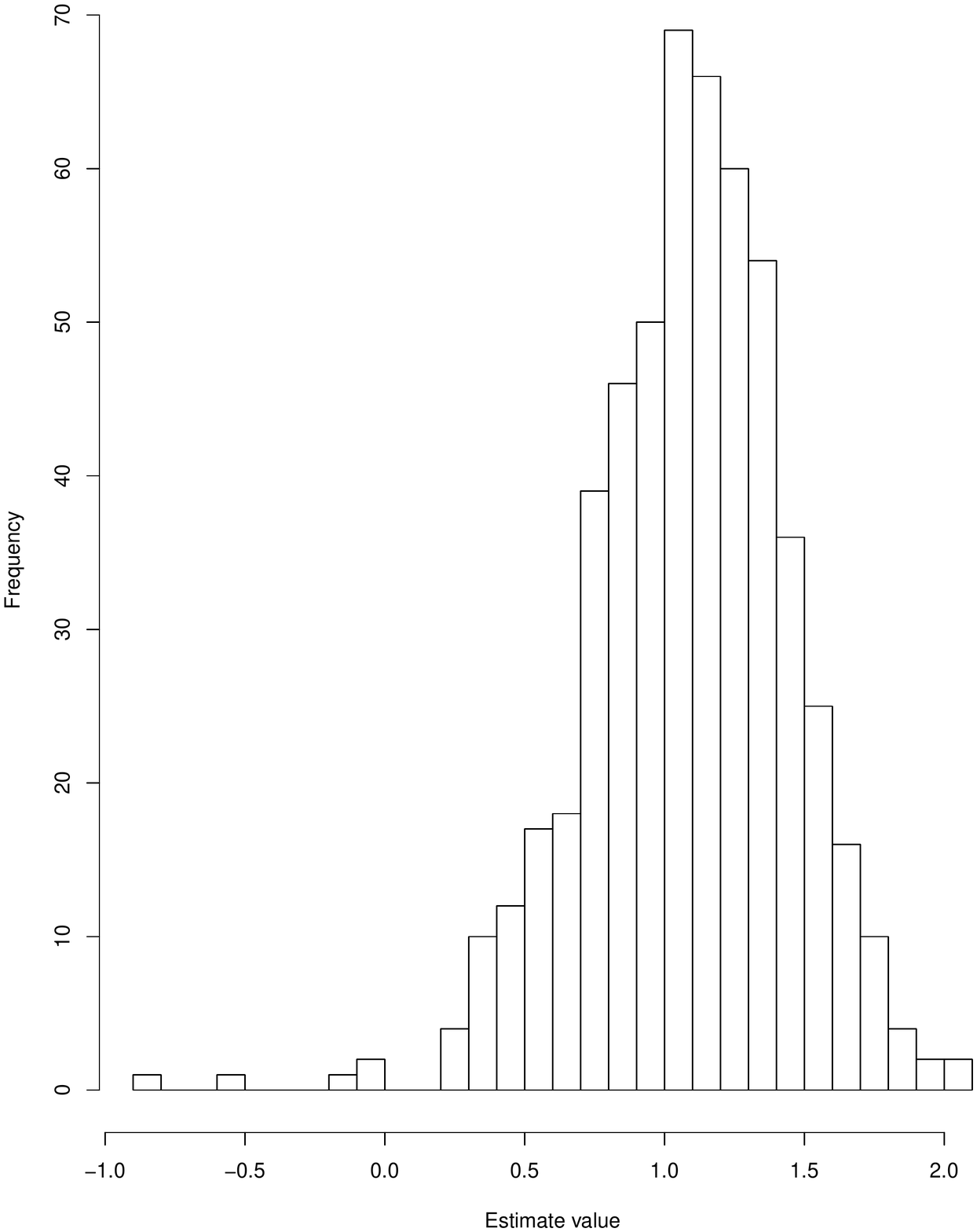}
    \caption{Estimates of $\alpha_0$.}
    \label{fig:H=2per3-N=100-alpha0}
  \end{subfigure}
 \begin{subfigure}[t]{0.32\textwidth}
    \includegraphics[width=\textwidth]{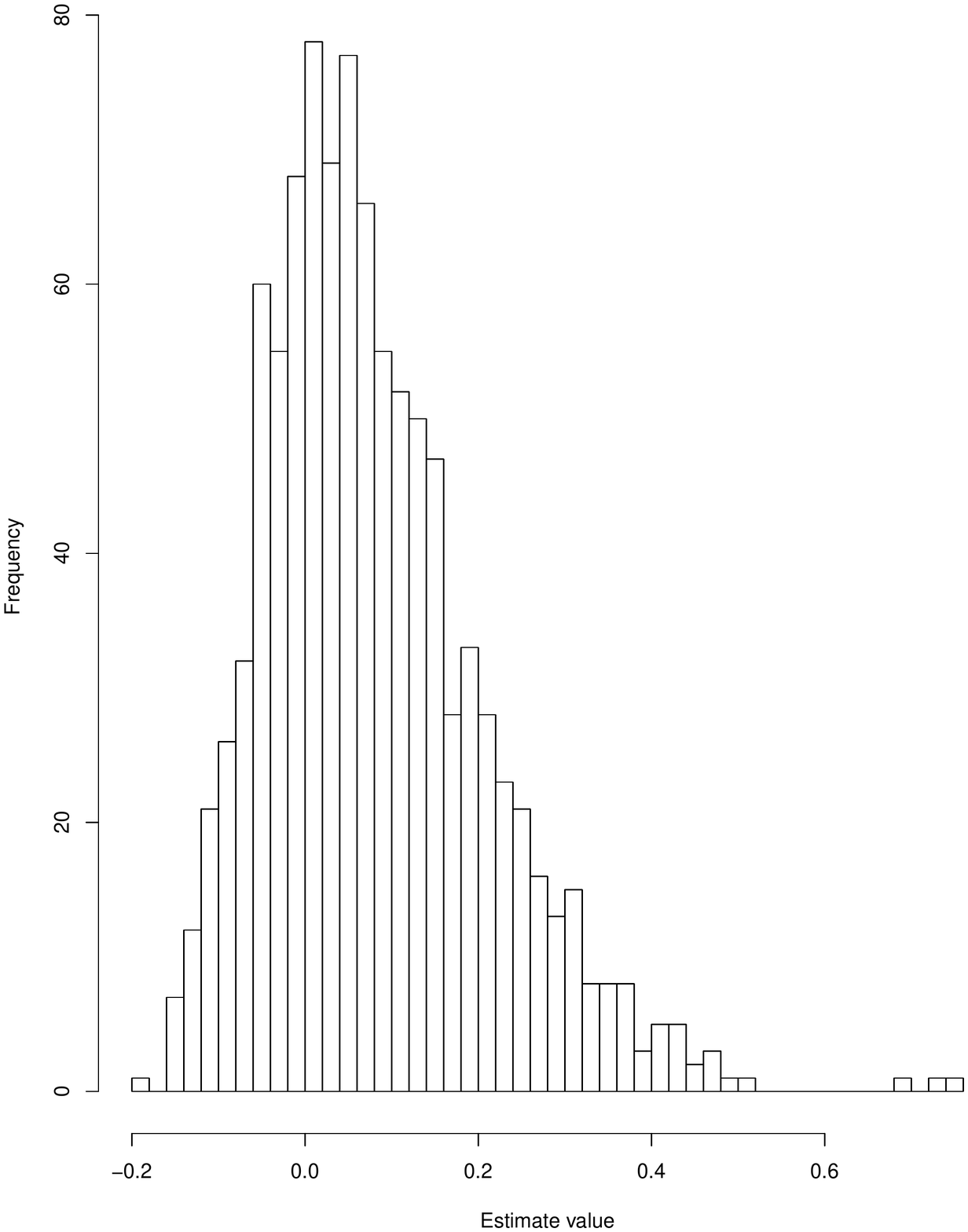}
    \caption{Estimates of $\alpha_1$.}
   \label{fig:H=2per3-N=100-alpha1}
 \end{subfigure}
 \begin{subfigure}[t]{0.32\textwidth}
    \includegraphics[width=\textwidth]{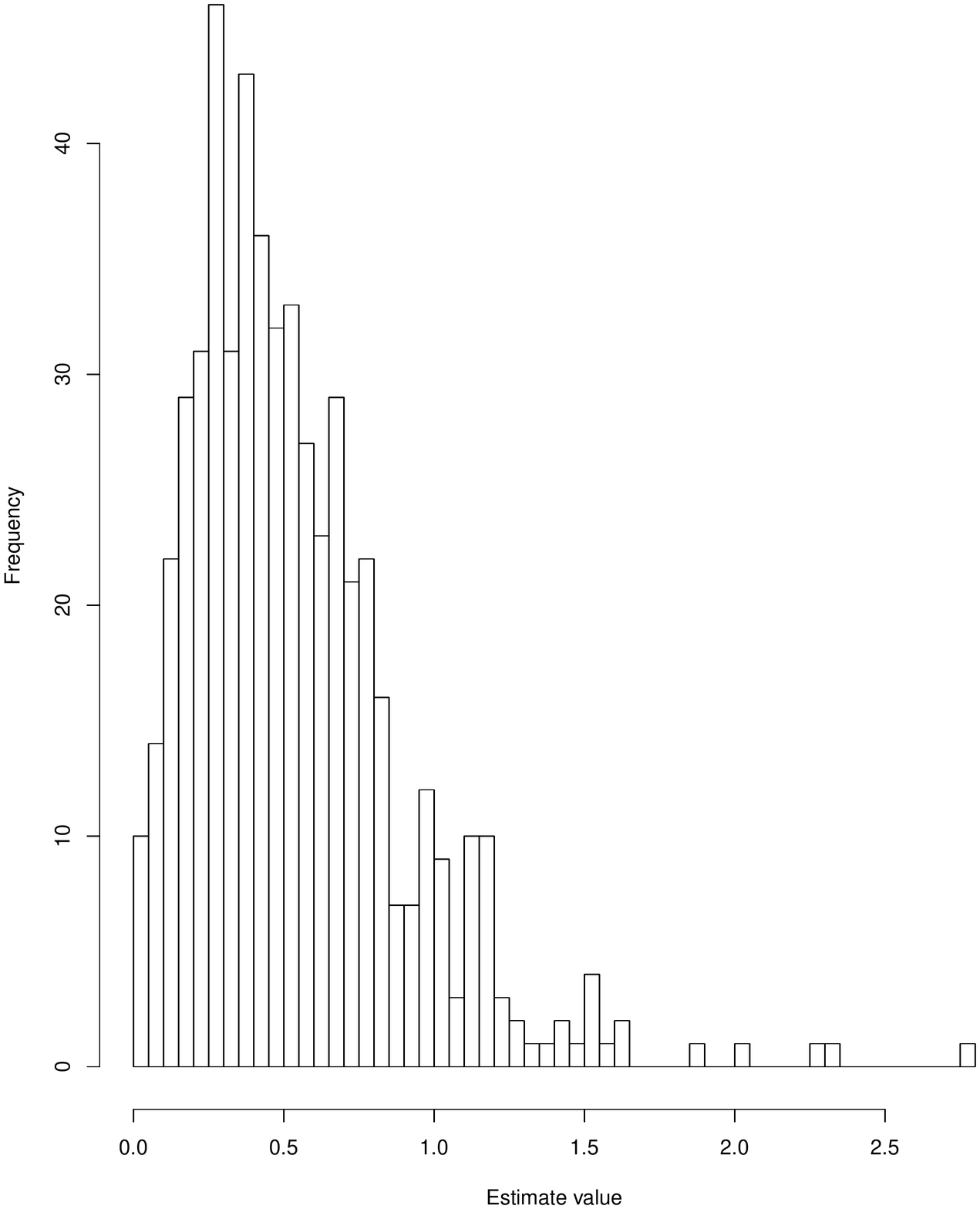}
    \caption{Estimates of $l_1$.}
   \label{fig:H=2per3-N=100-l1}
 \end{subfigure}
\caption{Fractional Brownian motion liquidity with $H=\frac{2}{3}$ and $N=100$.}
\label{fig:H=2per3-N=100}
\end{figure}

\begin{figure}[H]
\centering
  \begin{subfigure}[t]{0.32\textwidth}
   \includegraphics[width=\textwidth]{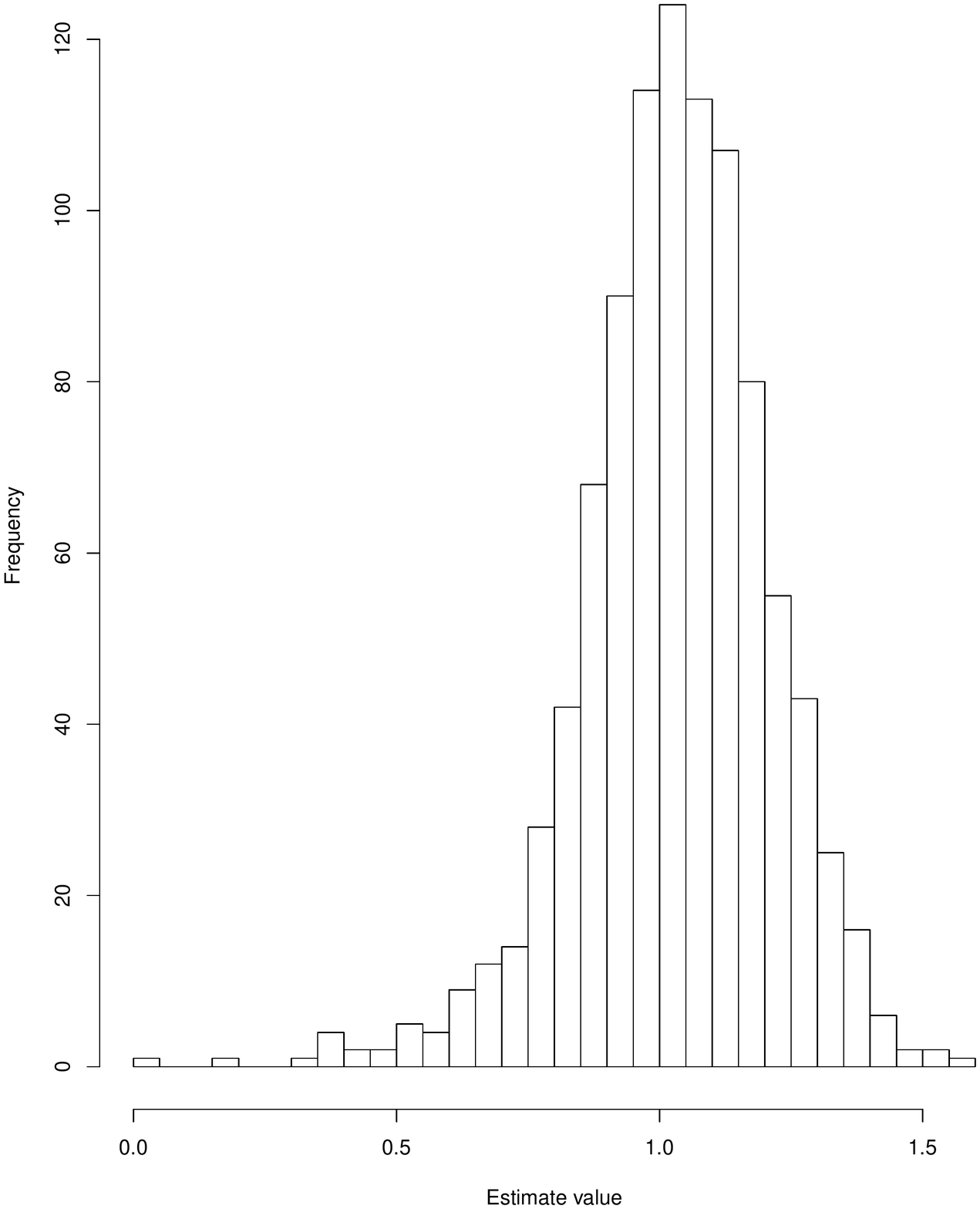}
    \caption{Estimates of $\alpha_0$.}
    \label{fig:H=2per3-N=1000-alpha0}
  \end{subfigure}
 \begin{subfigure}[t]{0.32\textwidth}
    \includegraphics[width=\textwidth]{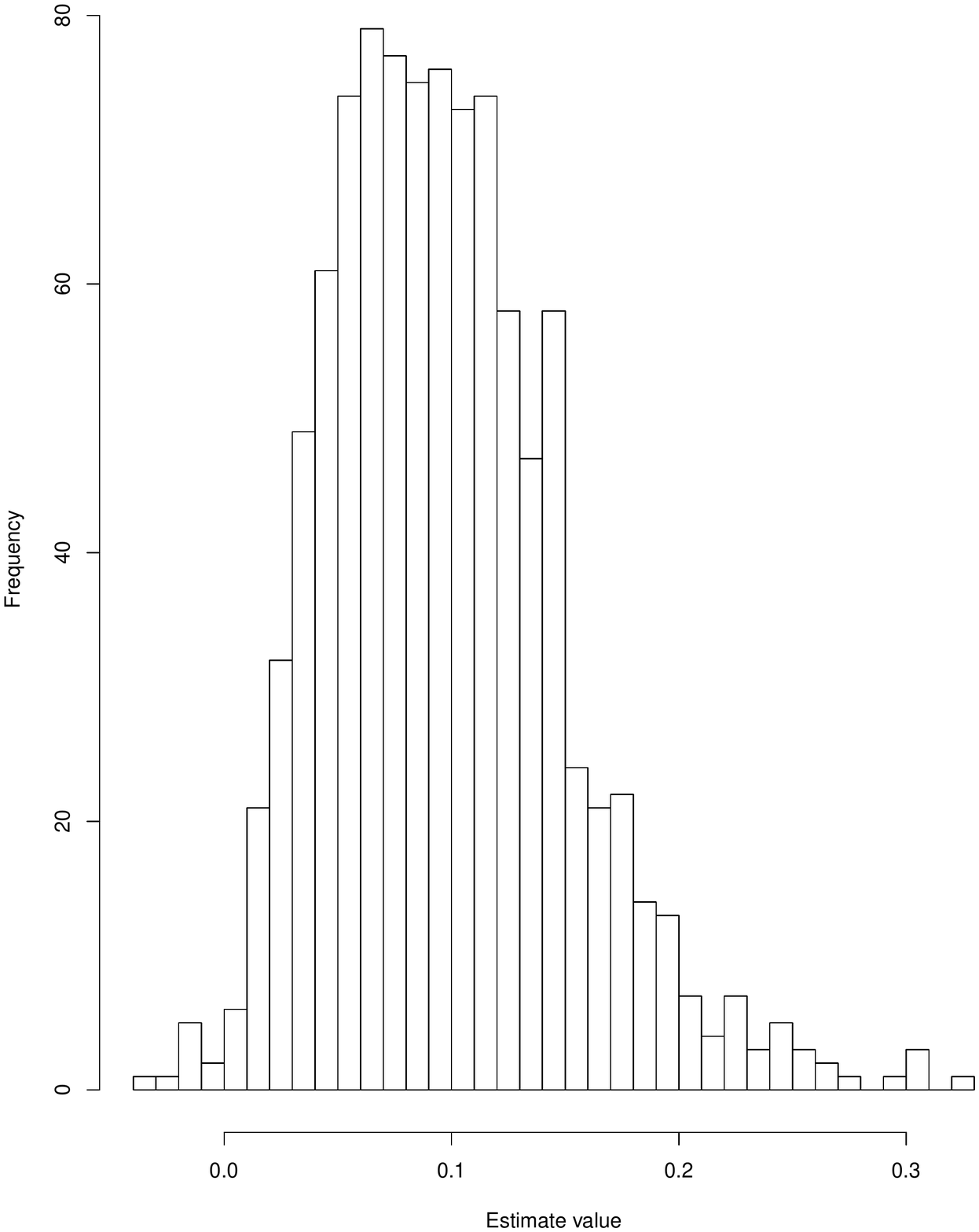}
    \caption{Estimates of $\alpha_1$.}
   \label{fig:H=2per3-N=1000-alpha1}
 \end{subfigure}
 \begin{subfigure}[t]{0.32\textwidth}
    \includegraphics[width=\textwidth]{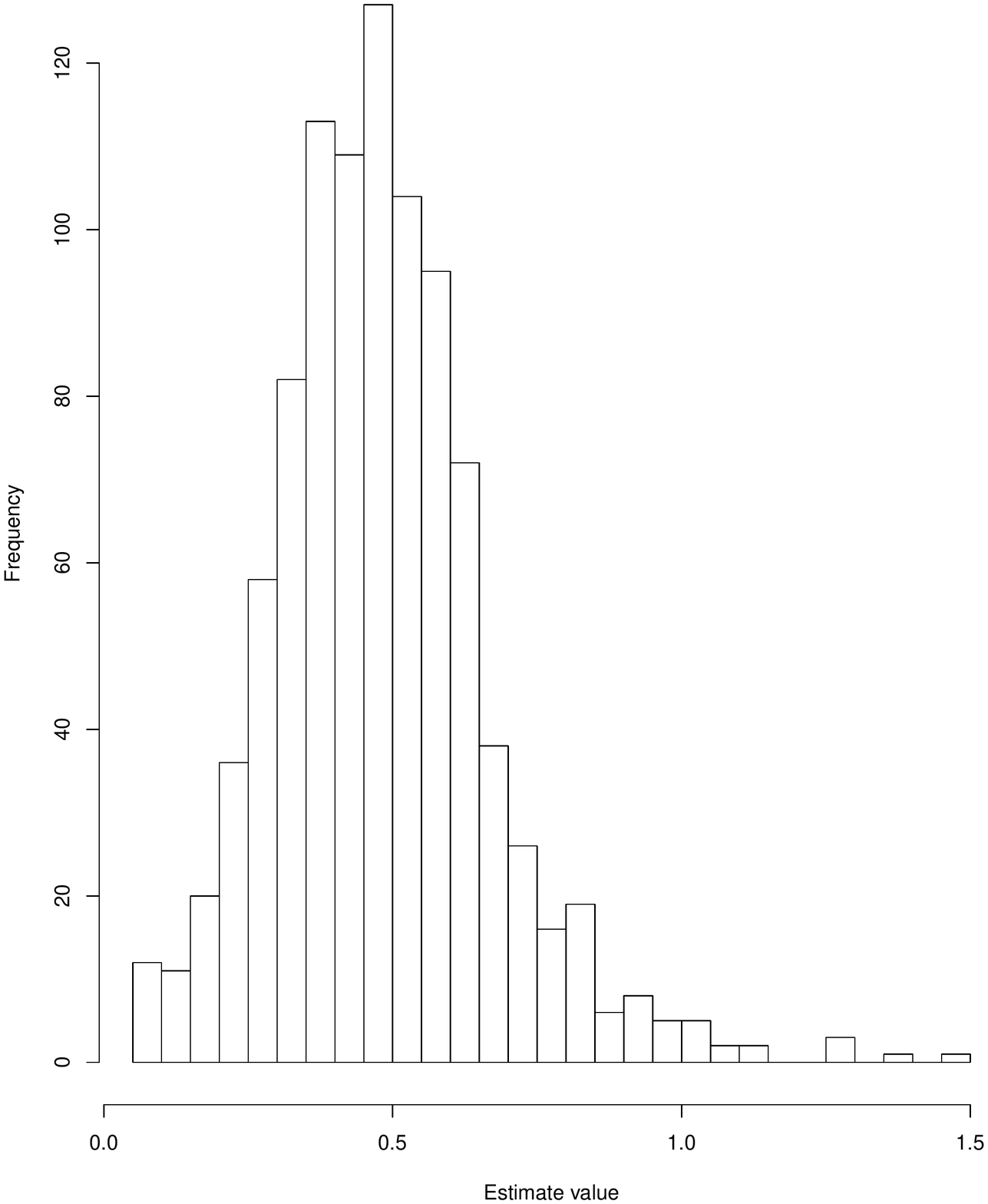}
    \caption{Estimates of $l_1$.}
   \label{fig:H=2per3-N=1000-l1}
 \end{subfigure}
\caption{Fractional Brownian motion liquidity with $H=\frac{2}{3}$ and $N=1000$.}
\label{fig:H=2per3-N=1000}
\end{figure}

\begin{figure}[H]
\centering
  \begin{subfigure}[t]{0.32\textwidth}
   \includegraphics[width=\textwidth]{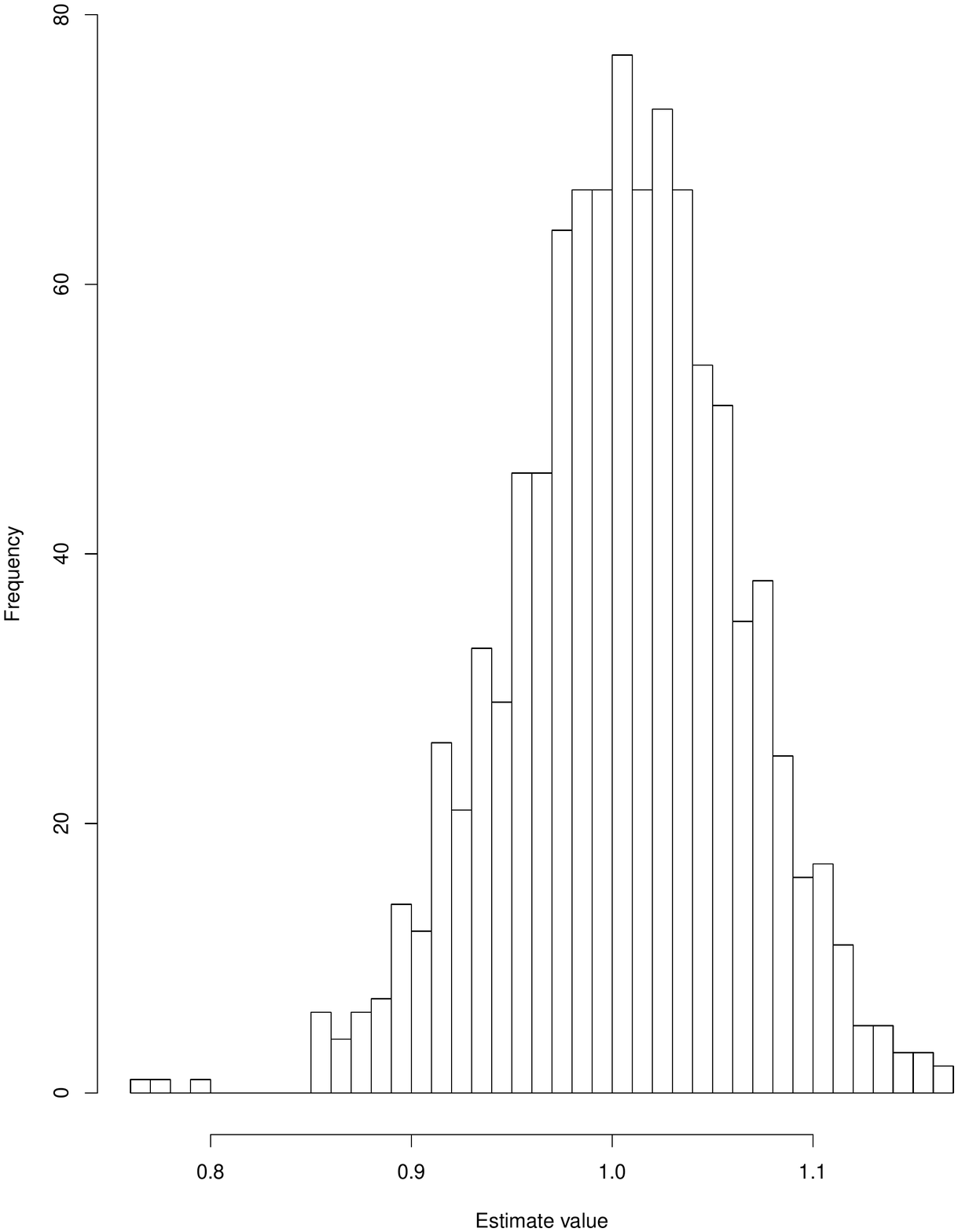}
    \caption{Estimates of $\alpha_0$.}
    \label{fig:H=2per3-N=10000-alpha0}
  \end{subfigure}
 \begin{subfigure}[t]{0.32\textwidth}
    \includegraphics[width=\textwidth]{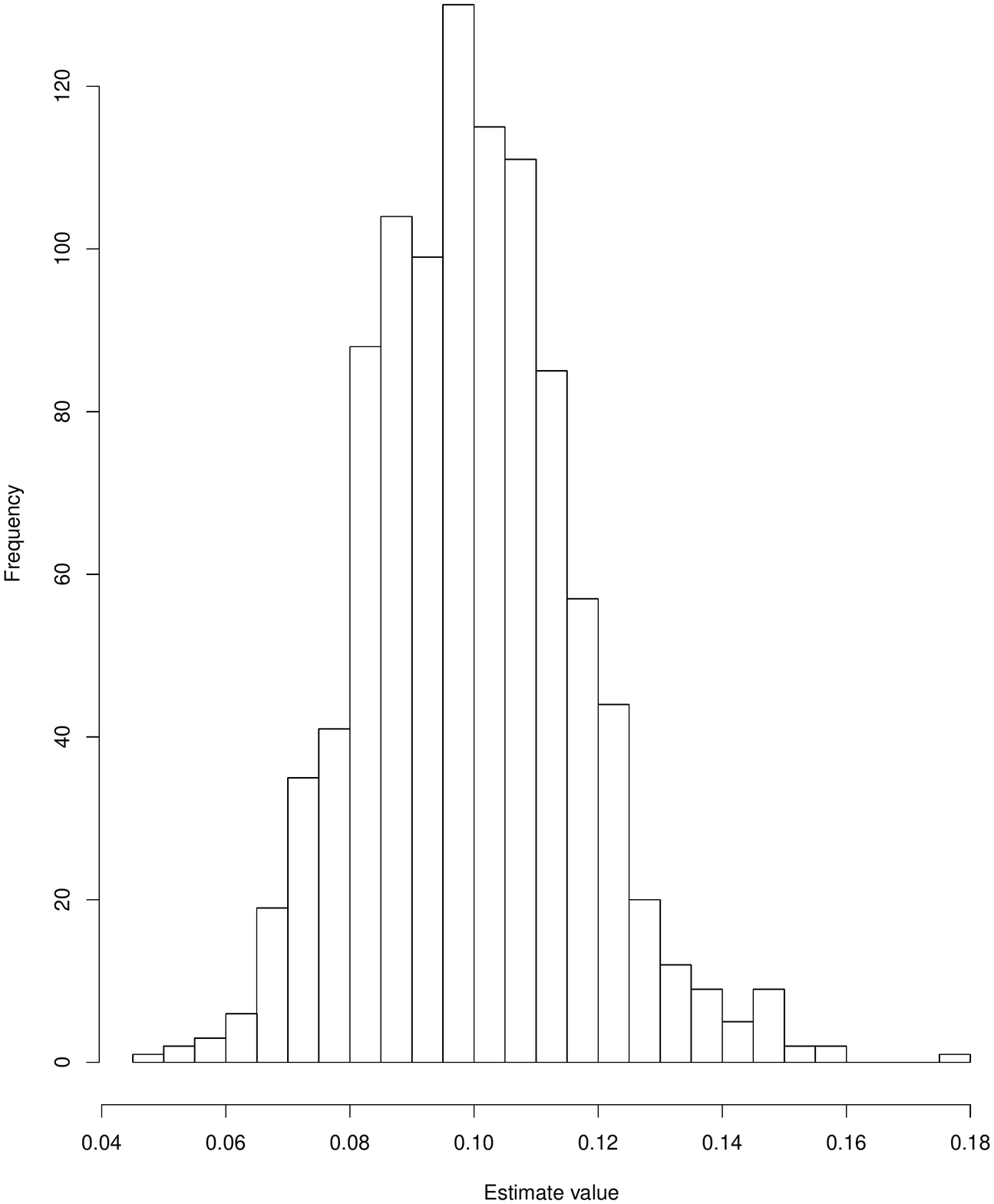}
    \caption{Estimates of $\alpha_1$.}
   \label{fig:H=2per3-N=10000-alpha1}
 \end{subfigure}
 \begin{subfigure}[t]{0.32\textwidth}
    \includegraphics[width=\textwidth]{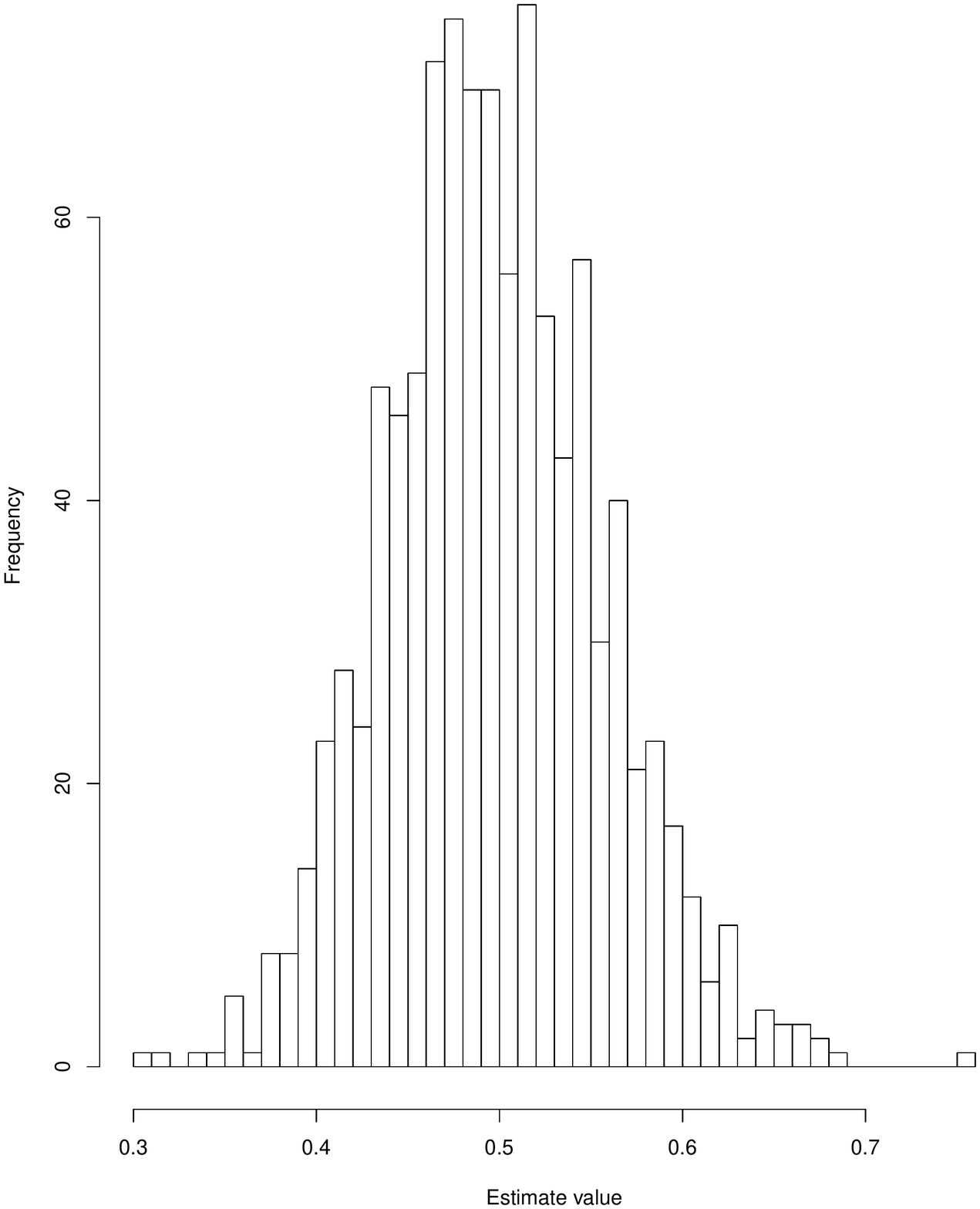}
    \caption{Estimates of $l_1$.}
   \label{fig:H=2per3-N=10000-l1}
 \end{subfigure}
\caption{Fractional Brownian motion liquidity with $H=\frac{2}{3}$ and $N=10000$.}
\label{fig:H=2per3-N=10000}
\end{figure}

\begin{figure}[H]
\centering
  \begin{subfigure}[t]{0.32\textwidth}
   \includegraphics[width=\textwidth]{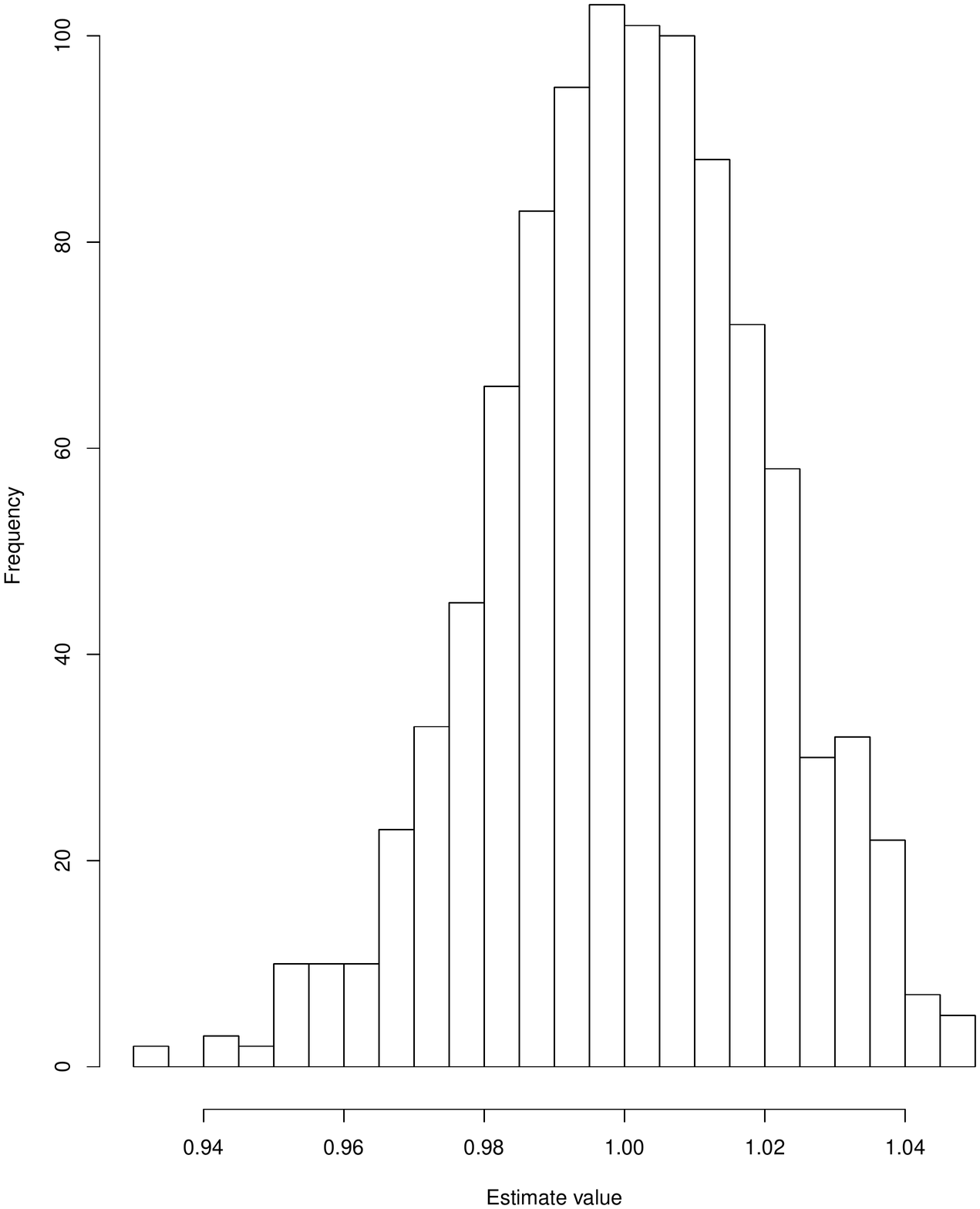}
    \caption{Estimates of $\alpha_0$.}
    \label{fig:H=2per3-N=100000-alpha0}
  \end{subfigure}
 \begin{subfigure}[t]{0.32\textwidth}
    \includegraphics[width=\textwidth]{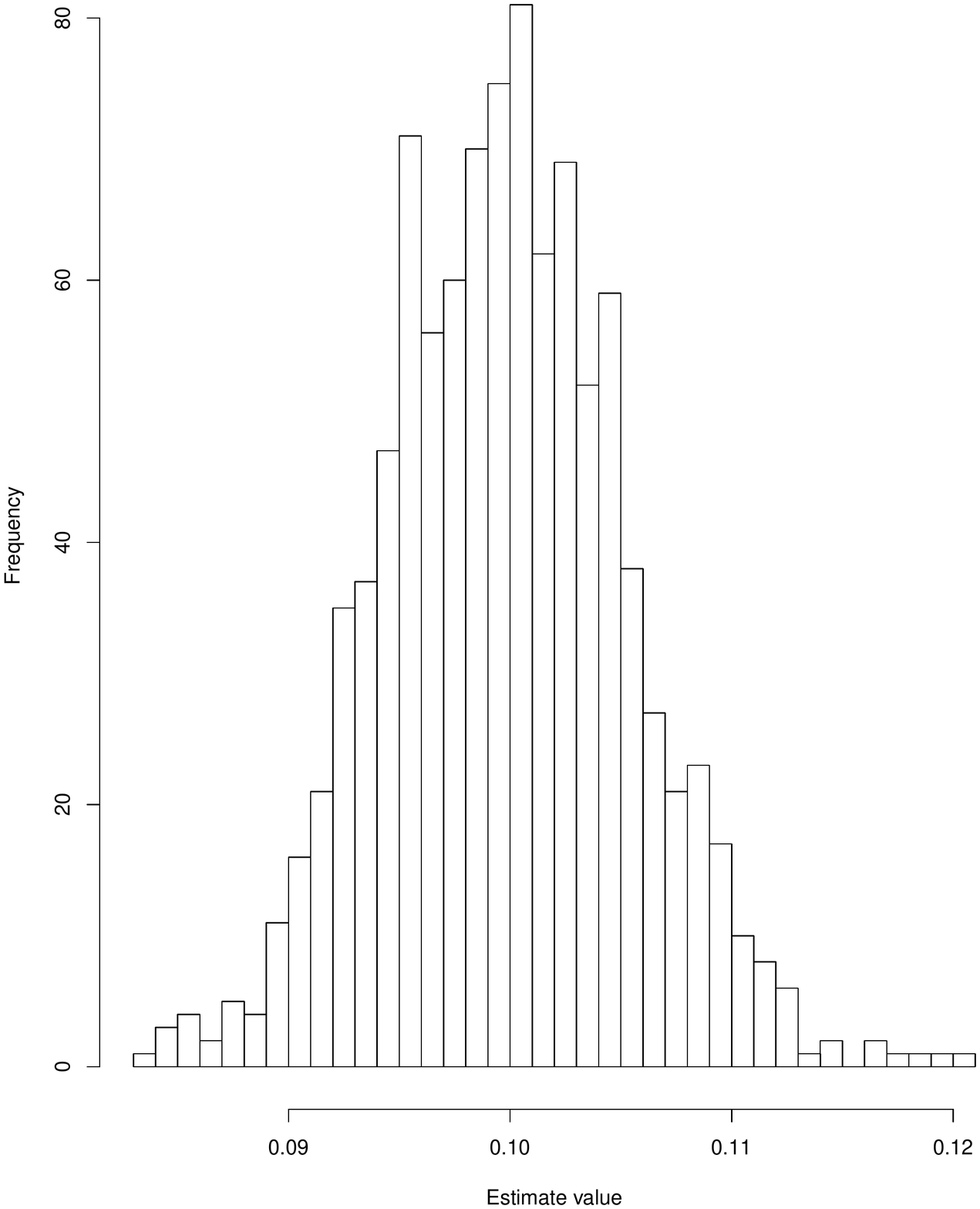}
    \caption{Estimates of $\alpha_1$.}
   \label{fig:H=2per3-N=100000-alpha1}
 \end{subfigure}
 \begin{subfigure}[t]{0.32\textwidth}
    \includegraphics[width=\textwidth]{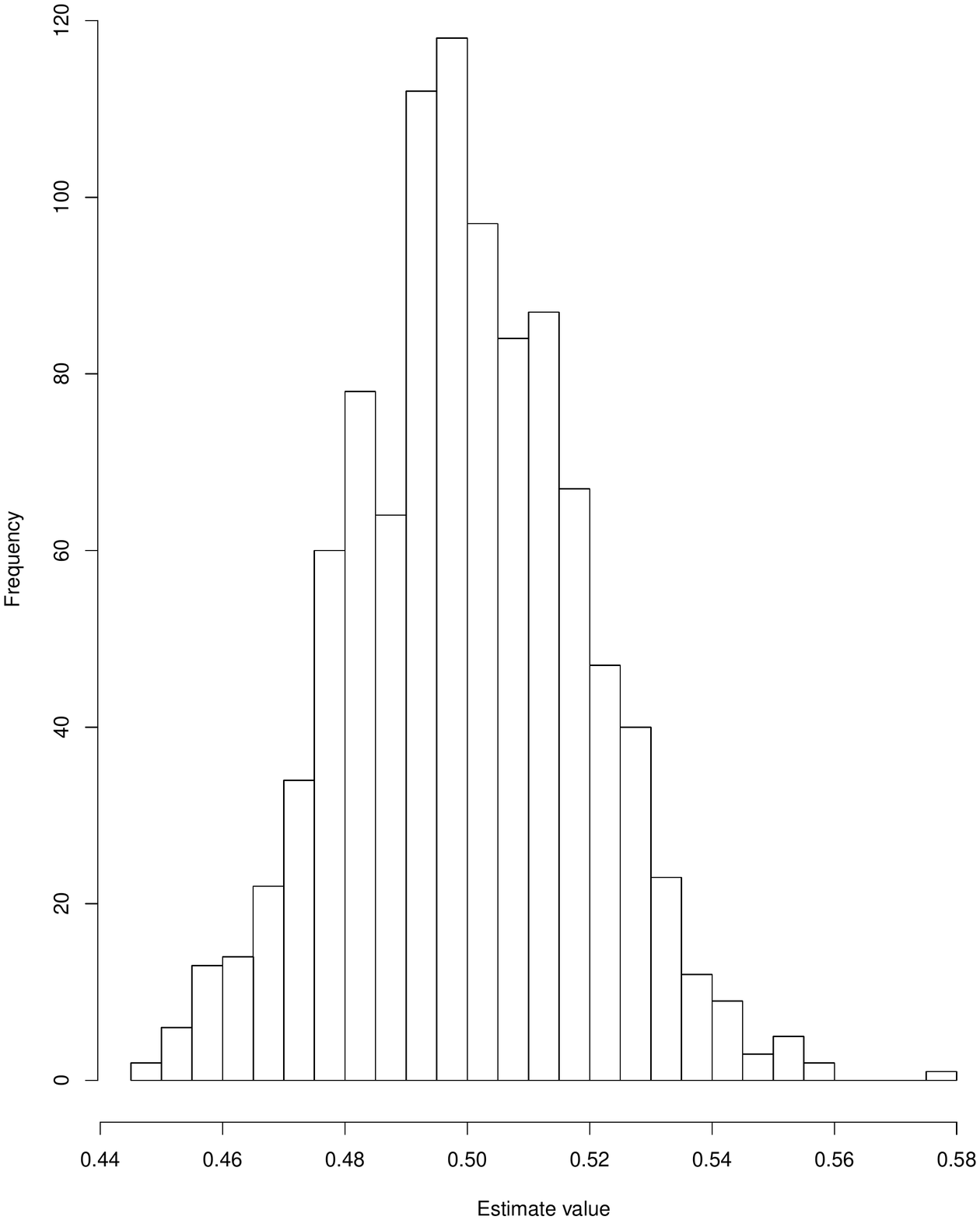}
    \caption{Estimates of $l_1$.}
   \label{fig:H=2per3-N=100000-l1}
 \end{subfigure}
\caption{Fractional Brownian motion liquidity with $H=\frac{2}{3}$ and $N=100000$.}
\label{fig:H=2per3-N=100000}
\end{figure}

\subsection{Fractional Brownian motion with $H=\frac{4}{5}$.}
Histograms of the estimates of the model parameters corresponding to $L_t = (B^H_{t+1} - B^H_{t})^2$ with $H = \frac{4}{5}$ are provided in Figures \ref{fig:H=4per5-N=100}, \ref{fig:H=4per5-N=1000}, \ref{fig:H=4per5-N=10000} and \ref{fig:H=4per5-N=100000}. The used sample sizes were $N=100, N=1000, N=10000$ and $N= 100000$. The sample sizes $N=100$ and $N=1000$ resulted complex valued estimates in $47.9\%$ and $4.3\%$ of the simulations respectively, whereas with the larger sample sizes all the estimates were real.

\begin{figure}[H]
\centering
  \begin{subfigure}[t]{0.32\textwidth}
   \includegraphics[width=\textwidth]{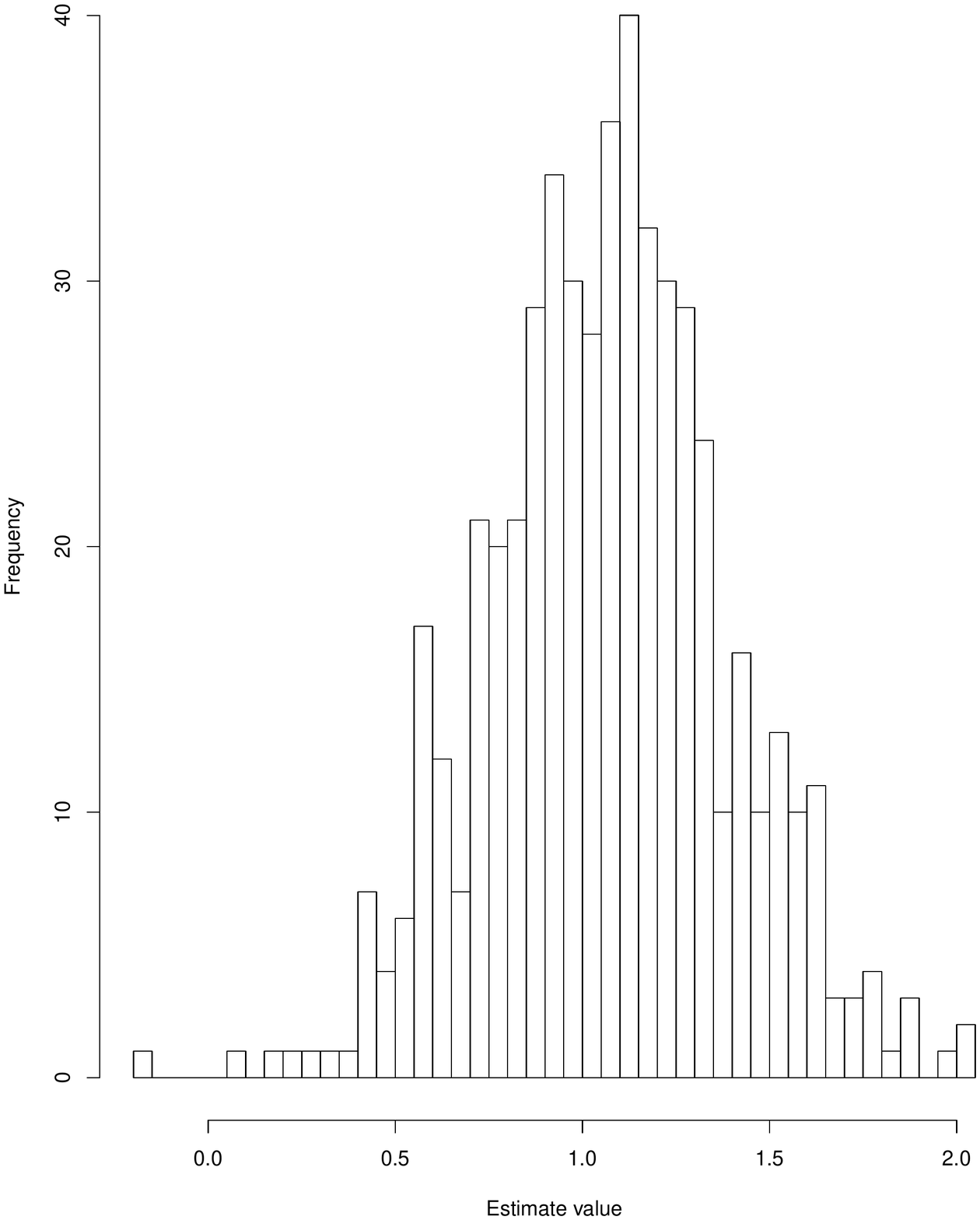}
    \caption{Estimates of $\alpha_0$.}
    \label{fig:H=4per5-N=100-alpha0}
  \end{subfigure}
 \begin{subfigure}[t]{0.32\textwidth}
    \includegraphics[width=\textwidth]{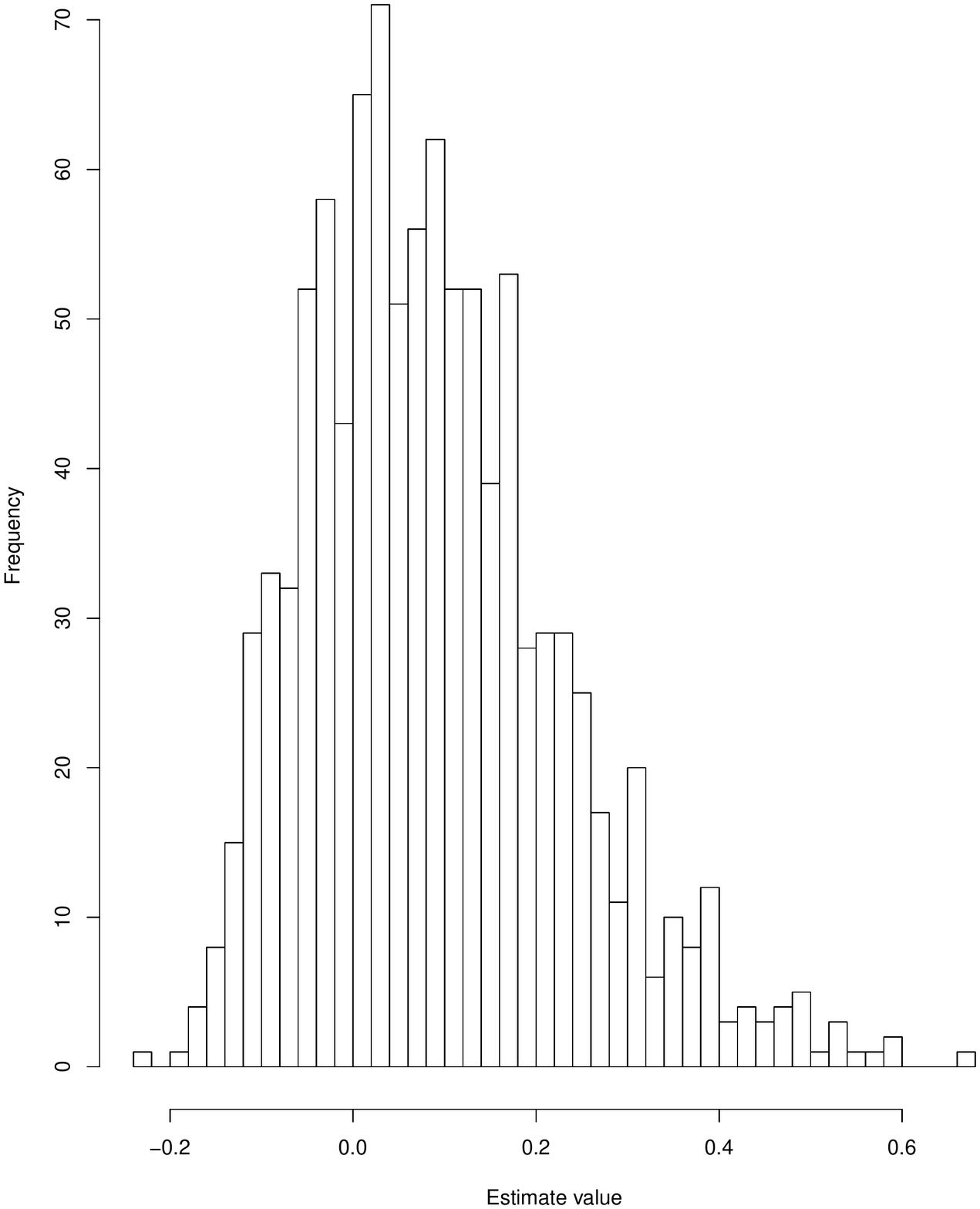}
    \caption{Estimates of $\alpha_1$.}
   \label{fig:H=4per5-N=100-alpha1}
 \end{subfigure}
 \begin{subfigure}[t]{0.32\textwidth}
    \includegraphics[width=\textwidth]{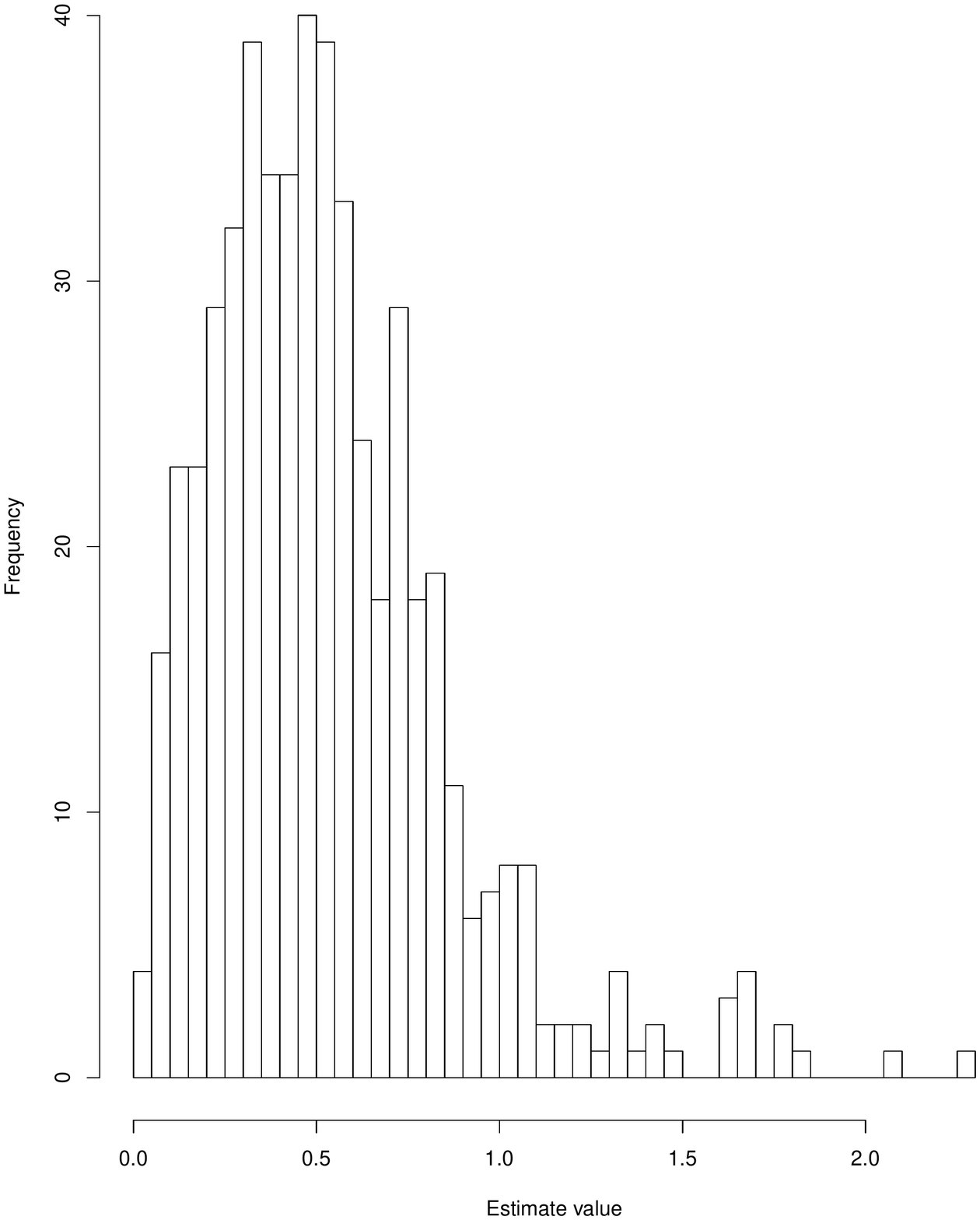}
    \caption{Estimates of $l_1$.}
   \label{fig:H=4per5-N=100-l1}
 \end{subfigure}
\caption{Fractional Brownian motion liquidity with $H=\frac{4}{5}$ and $N=100$.}
\label{fig:H=4per5-N=100}
\end{figure}

\begin{figure}[H]
\centering
  \begin{subfigure}[t]{0.32\textwidth}
   \includegraphics[width=\textwidth]{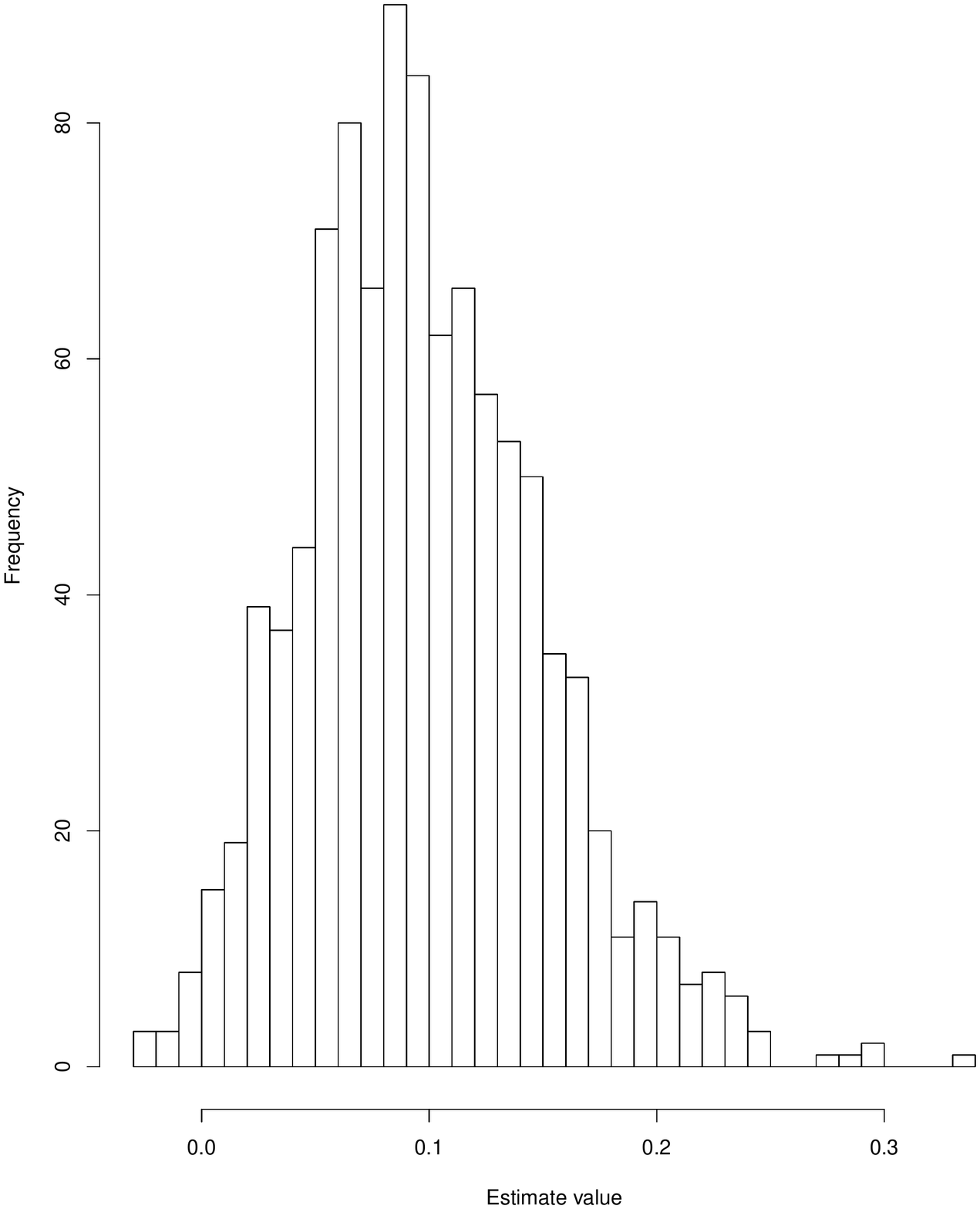}
    \caption{Estimates of $\alpha_0$.}
    \label{fig:H=4per5-N=1000-alpha0}
  \end{subfigure}
 \begin{subfigure}[t]{0.32\textwidth}
    \includegraphics[width=\textwidth]{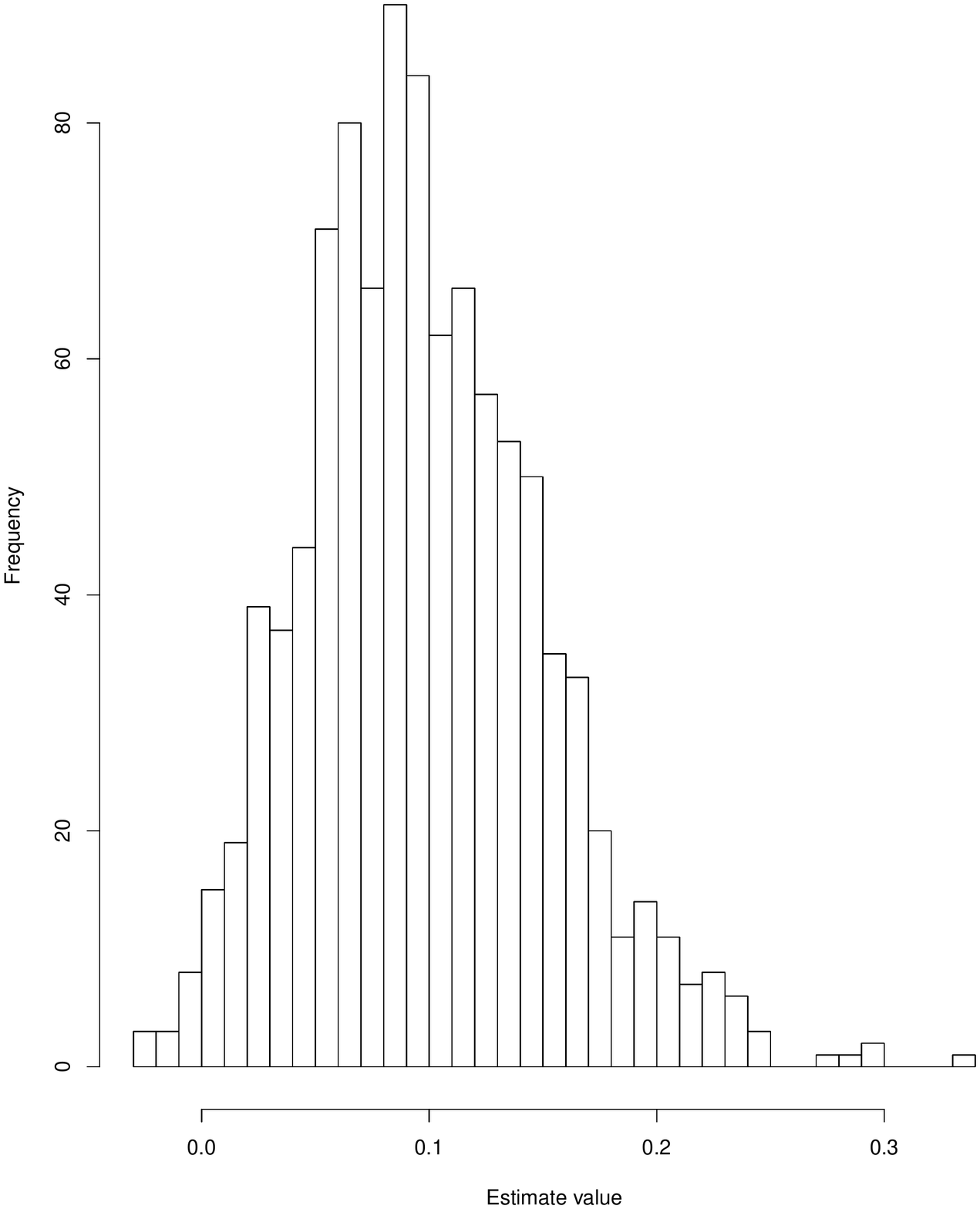}
    \caption{Estimates of $\alpha_1$.}
   \label{fig:H=4per5-N=1000-alpha1}
 \end{subfigure}
 \begin{subfigure}[t]{0.32\textwidth}
    \includegraphics[width=\textwidth]{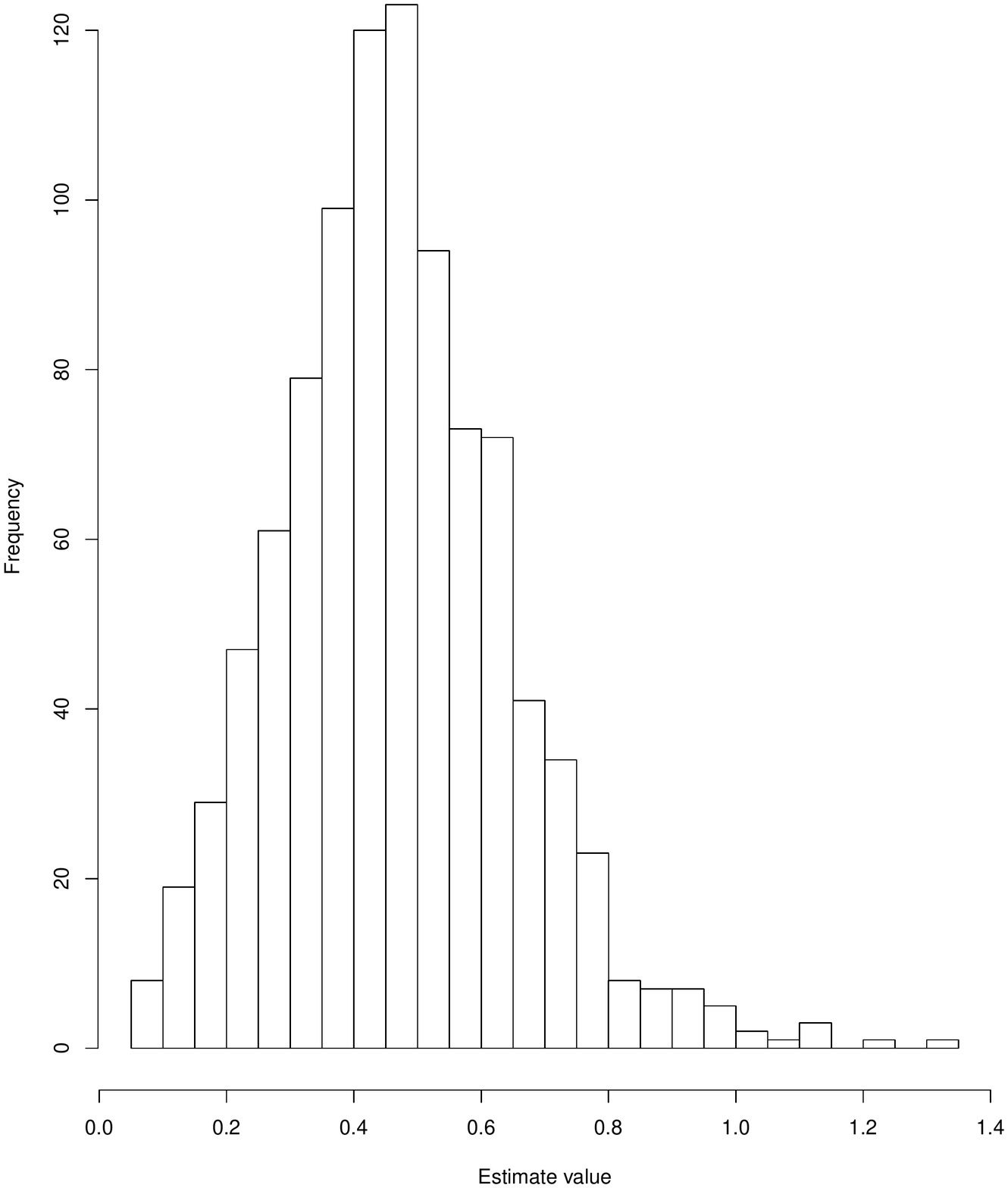}
    \caption{Estimates of $l_1$.}
   \label{fig:H=4per5-N=1000-l1}
 \end{subfigure}
\caption{Fractional Brownian motion liquidity with $H=\frac{4}{5}$ and $N=1000$.}
\label{fig:H=4per5-N=1000}
\end{figure}

\begin{figure}[H]
\centering
  \begin{subfigure}[t]{0.32\textwidth}
   \includegraphics[width=\textwidth]{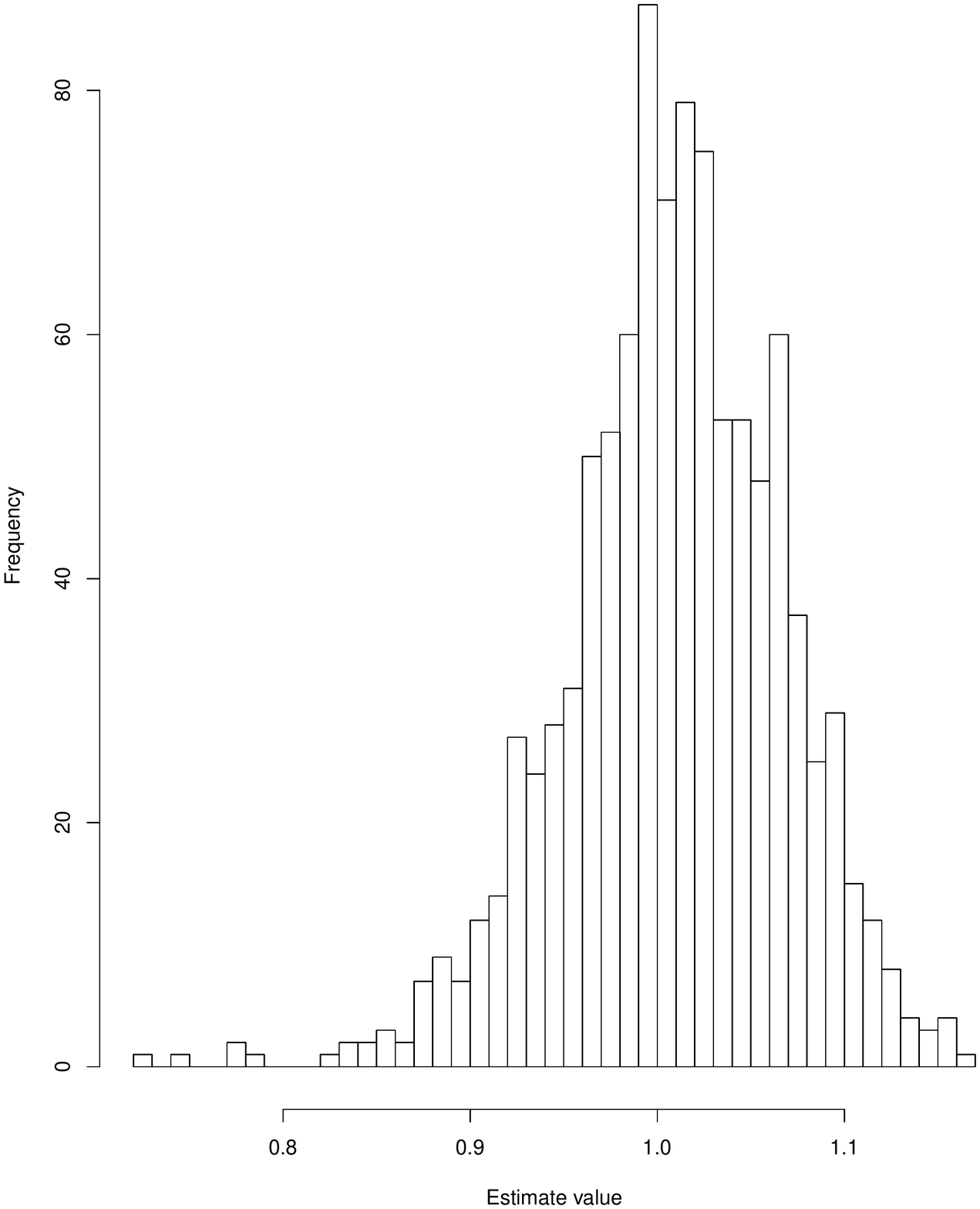}
    \caption{Estimates of $\alpha_0$.}
    \label{fig:H=4per5-N=10000-alpha0}
  \end{subfigure}
 \begin{subfigure}[t]{0.32\textwidth}
    \includegraphics[width=\textwidth]{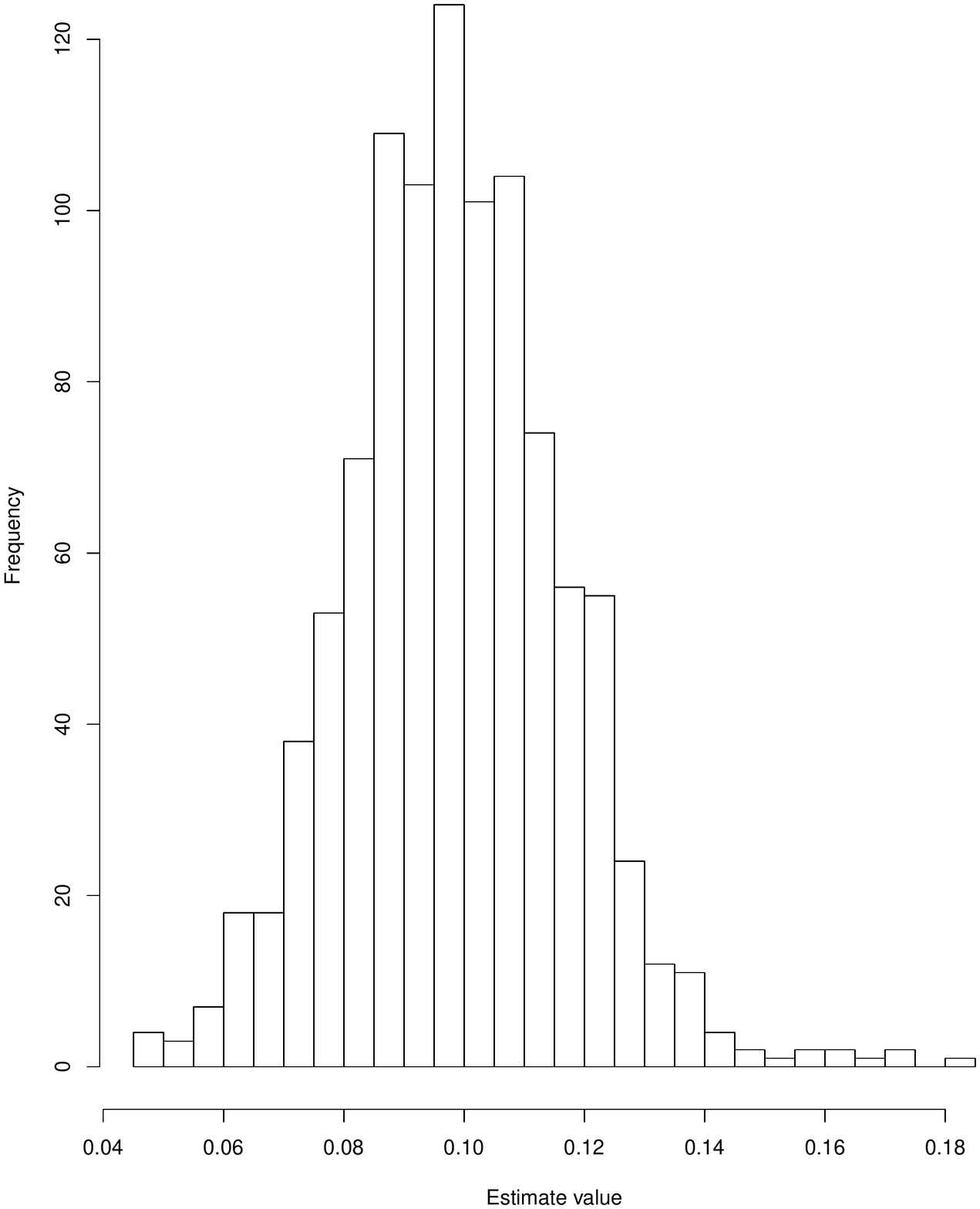}
    \caption{Estimates of $\alpha_1$.}
   \label{fig:H=4per5-N=10000-alpha1}
 \end{subfigure}
 \begin{subfigure}[t]{0.32\textwidth}
    \includegraphics[width=\textwidth]{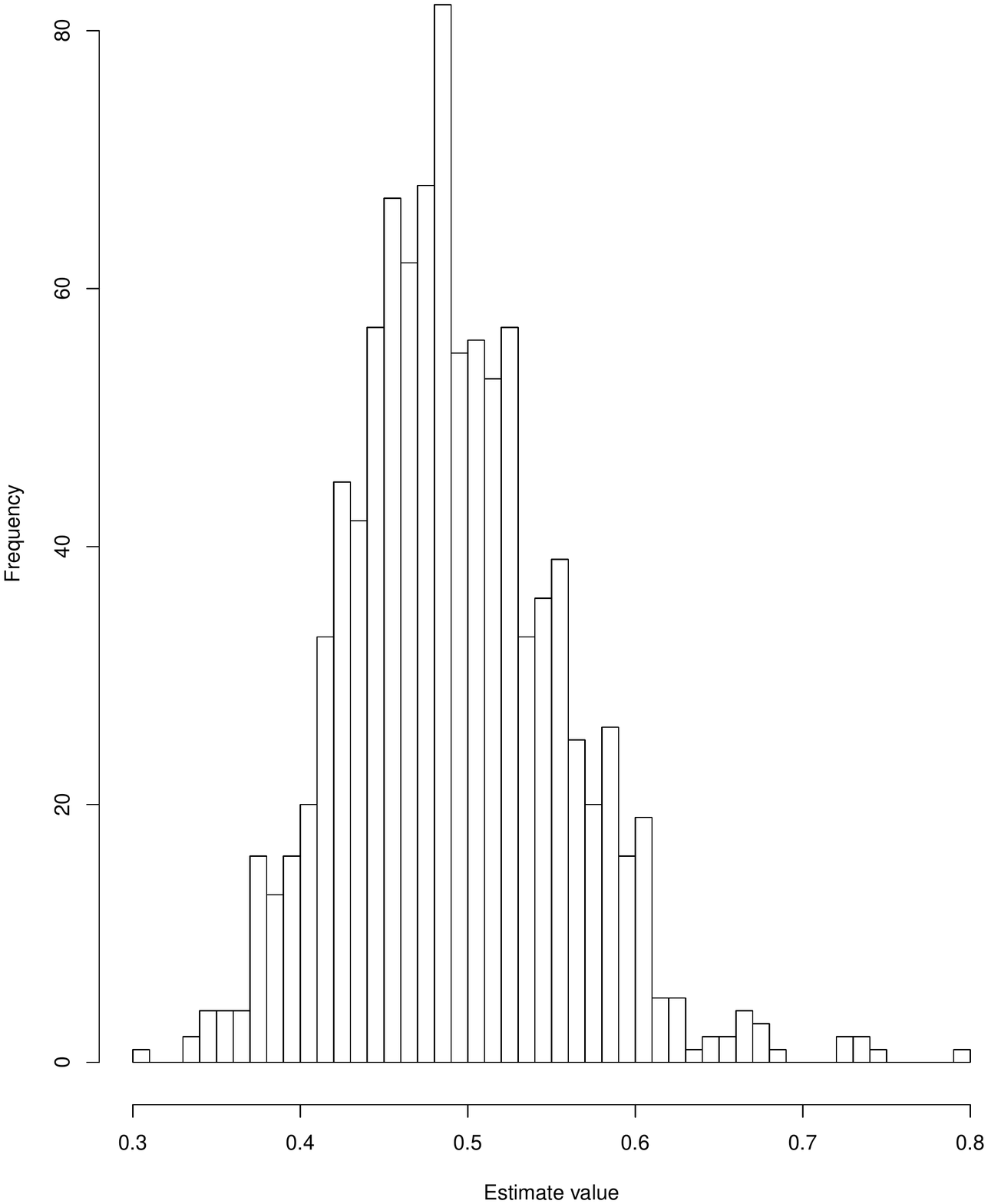}
    \caption{Estimates of $l_1$.}
   \label{fig:H=4per5-N=10000-l1}
 \end{subfigure}
\caption{Fractional Brownian motion liquidity with $H=\frac{4}{5}$ and $N=10000$.}
\label{fig:H=4per5-N=10000}
\end{figure}

\begin{figure}[H]
\centering
  \begin{subfigure}[t]{0.32\textwidth}
   \includegraphics[width=\textwidth]{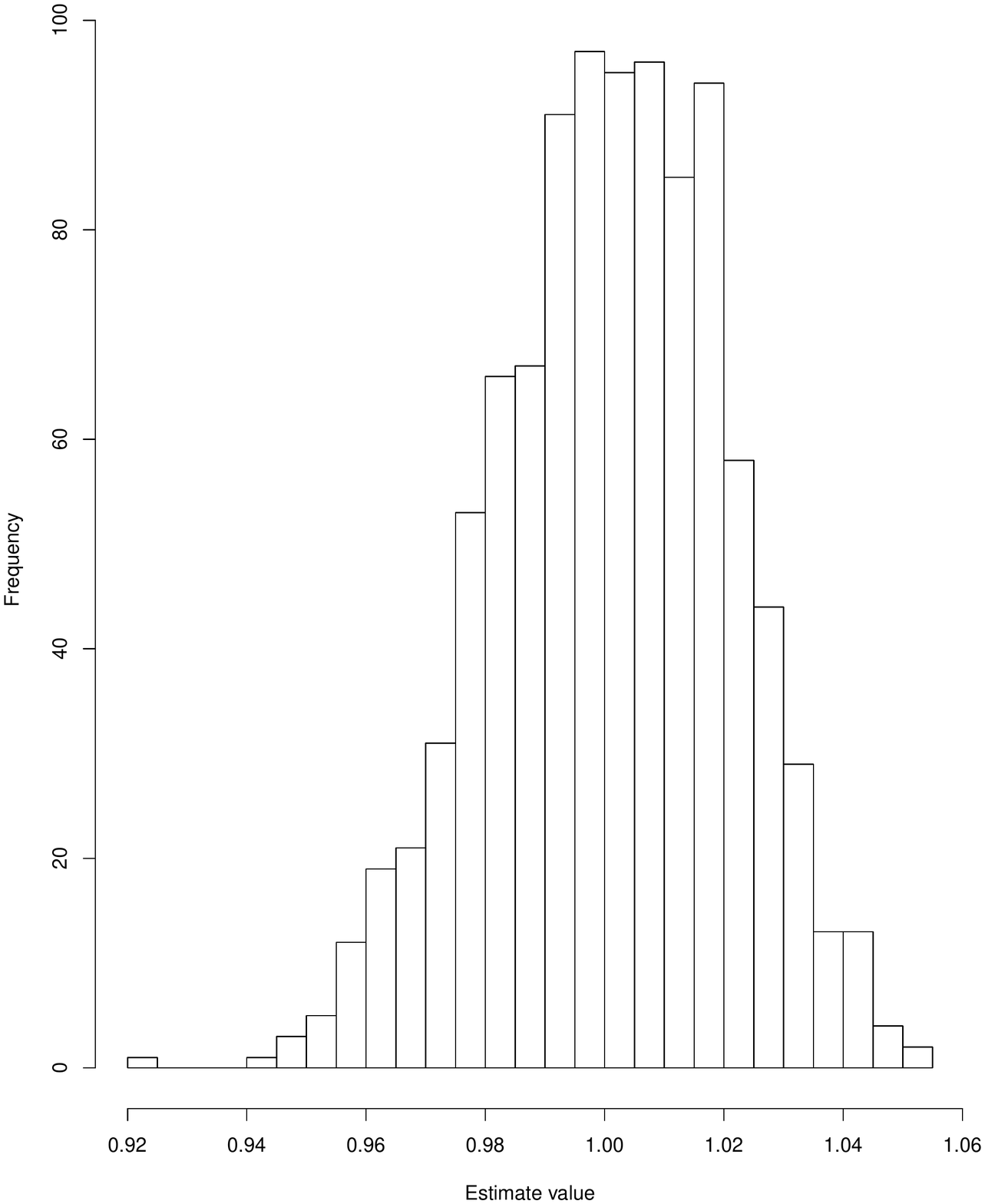}
    \caption{Estimates of $\alpha_0$.}
    \label{fig:H=4per5-N=100000-alpha0}
  \end{subfigure}
 \begin{subfigure}[t]{0.32\textwidth}
    \includegraphics[width=\textwidth]{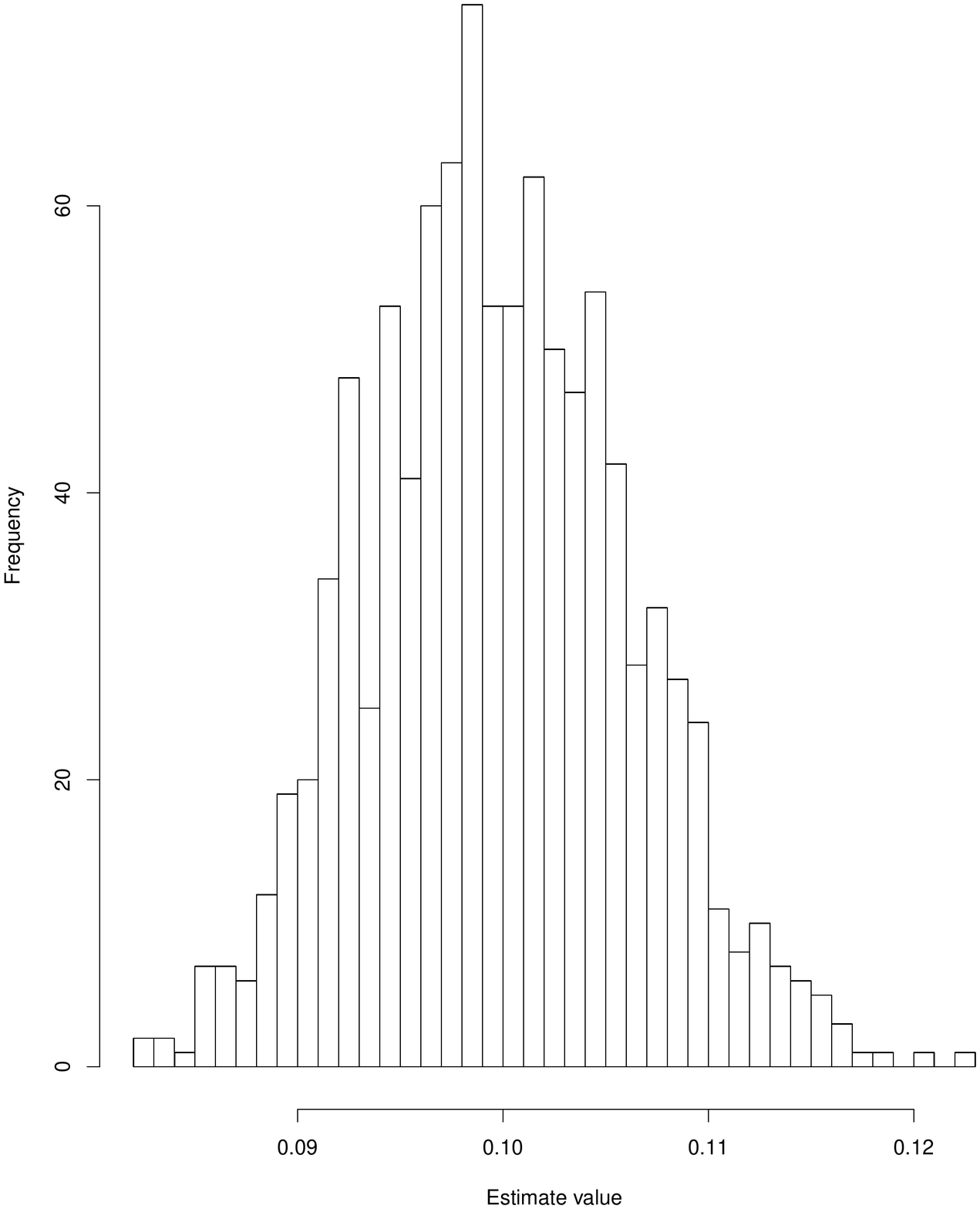}
    \caption{Estimates of $\alpha_1$.}
   \label{fig:H=4per5-N=100000-alpha1}
 \end{subfigure}
 \begin{subfigure}[t]{0.32\textwidth}
    \includegraphics[width=\textwidth]{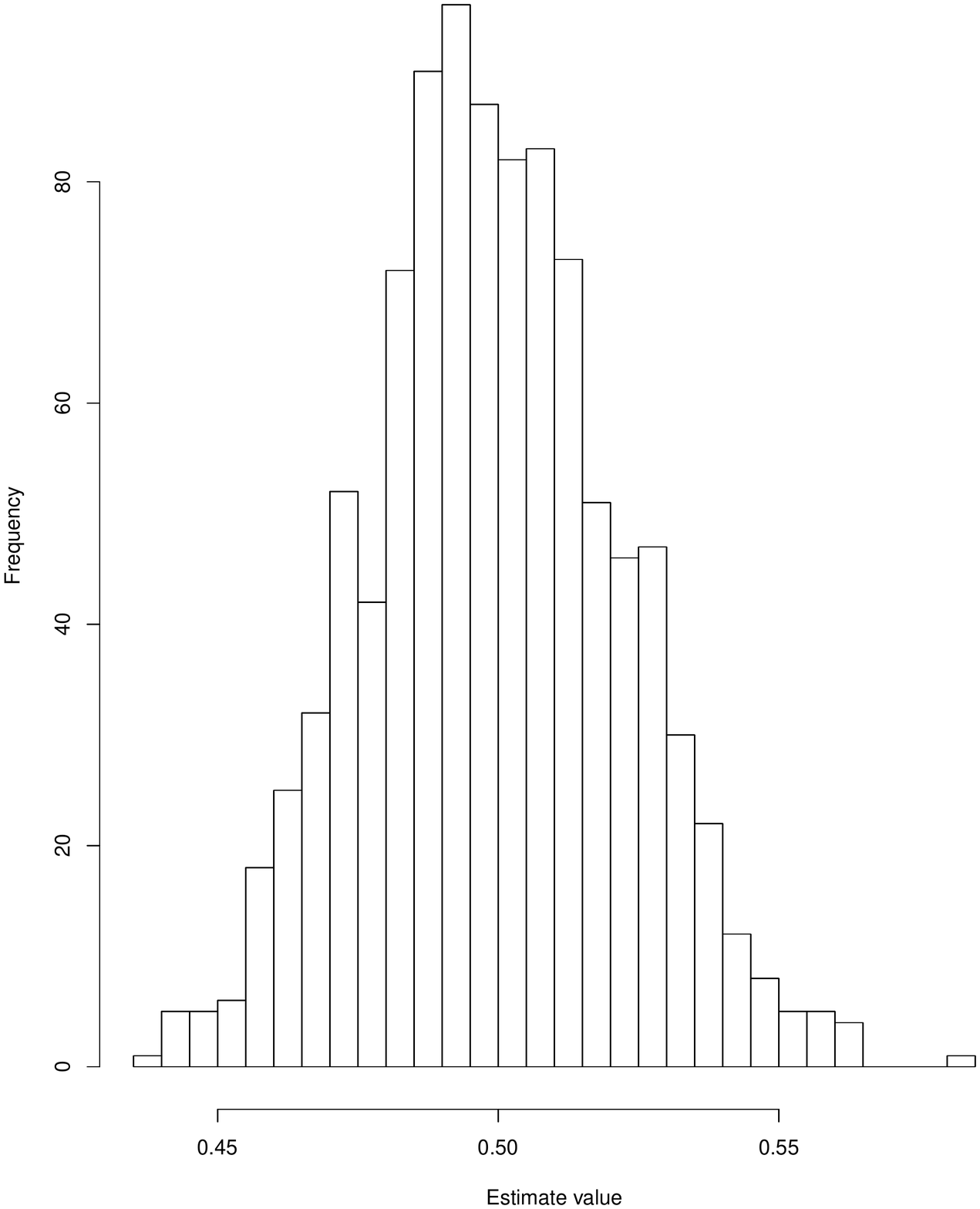}
    \caption{Estimates of $l_1$.}
   \label{fig:H=4per5-N=100000-l1}
 \end{subfigure}
\caption{Fractional Brownian motion liquidity with $H=\frac{4}{5}$ and $N=100000$.}
\label{fig:H=4per5-N=100000}
\end{figure}

\subsection{Compensated Poisson with $\lambda = 1$.} 
Histograms of the estimates of the model parameters corresponding to $L_t = (\tilde{N}_{t+1} - \tilde{N}_{t})^2$ with $\lambda = 1$ are provided in Figures \ref{fig:pois-N=100}, \ref{fig:pois-N=1000}, \ref{fig:pois-N=10000} and \ref{fig:pois-N=100000}. The used sample sizes were $N=100, N=1000, N=10000$ and $N= 100000$. The sample sizes $N=100$ and $N=1000$ resulted complex valued estimates in $45.1\%$ and $3.0\%$ of the simulations respectively, whereas with the larger sample sizes all the estimates were real.

\begin{figure}[H]
\centering
  \begin{subfigure}[t]{0.32\textwidth}
   \includegraphics[width=\textwidth]{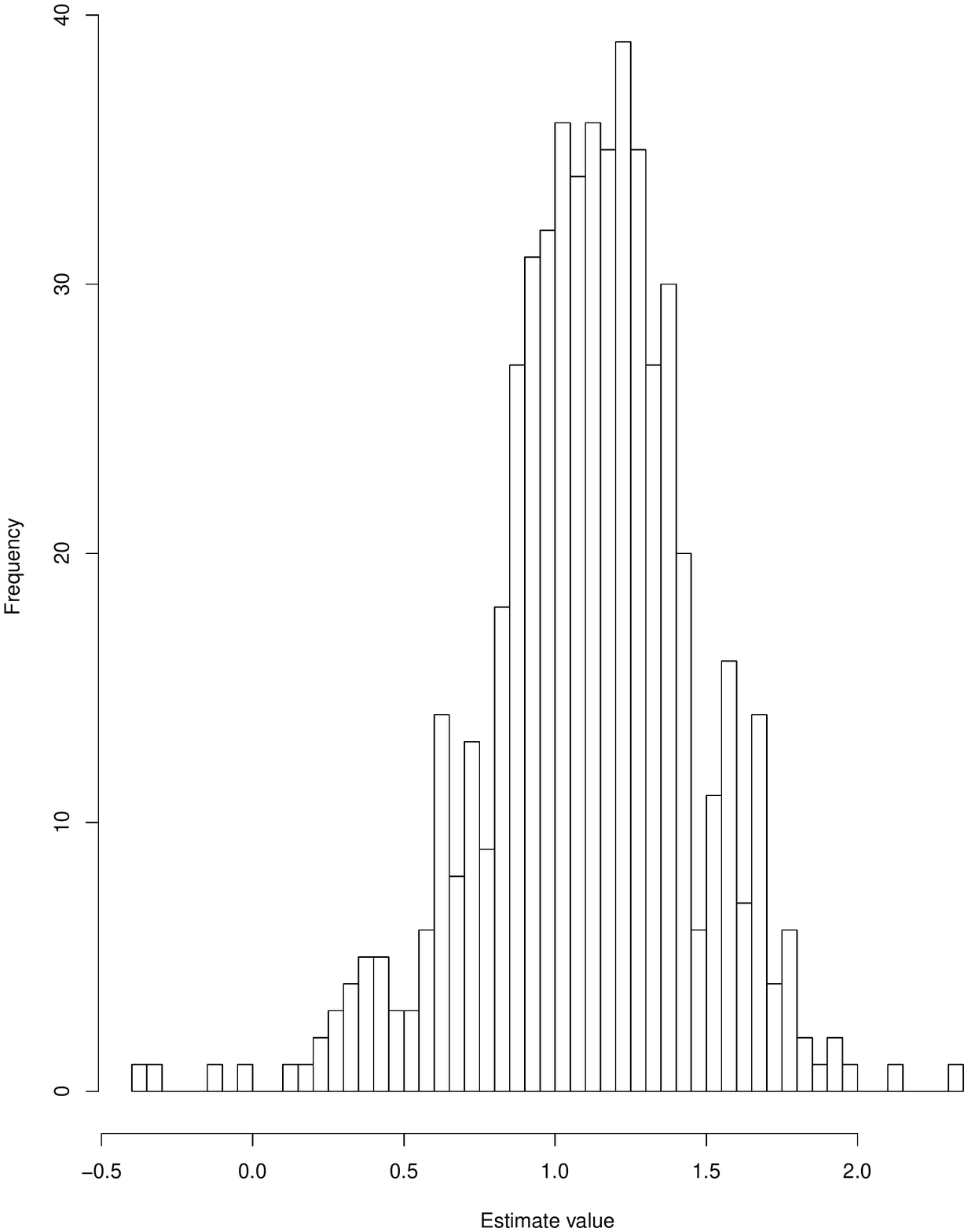}
    \caption{Estimates of $\alpha_0$.}
    \label{fig:pois-N=100-alpha0}
  \end{subfigure}
 \begin{subfigure}[t]{0.32\textwidth}
    \includegraphics[width=\textwidth]{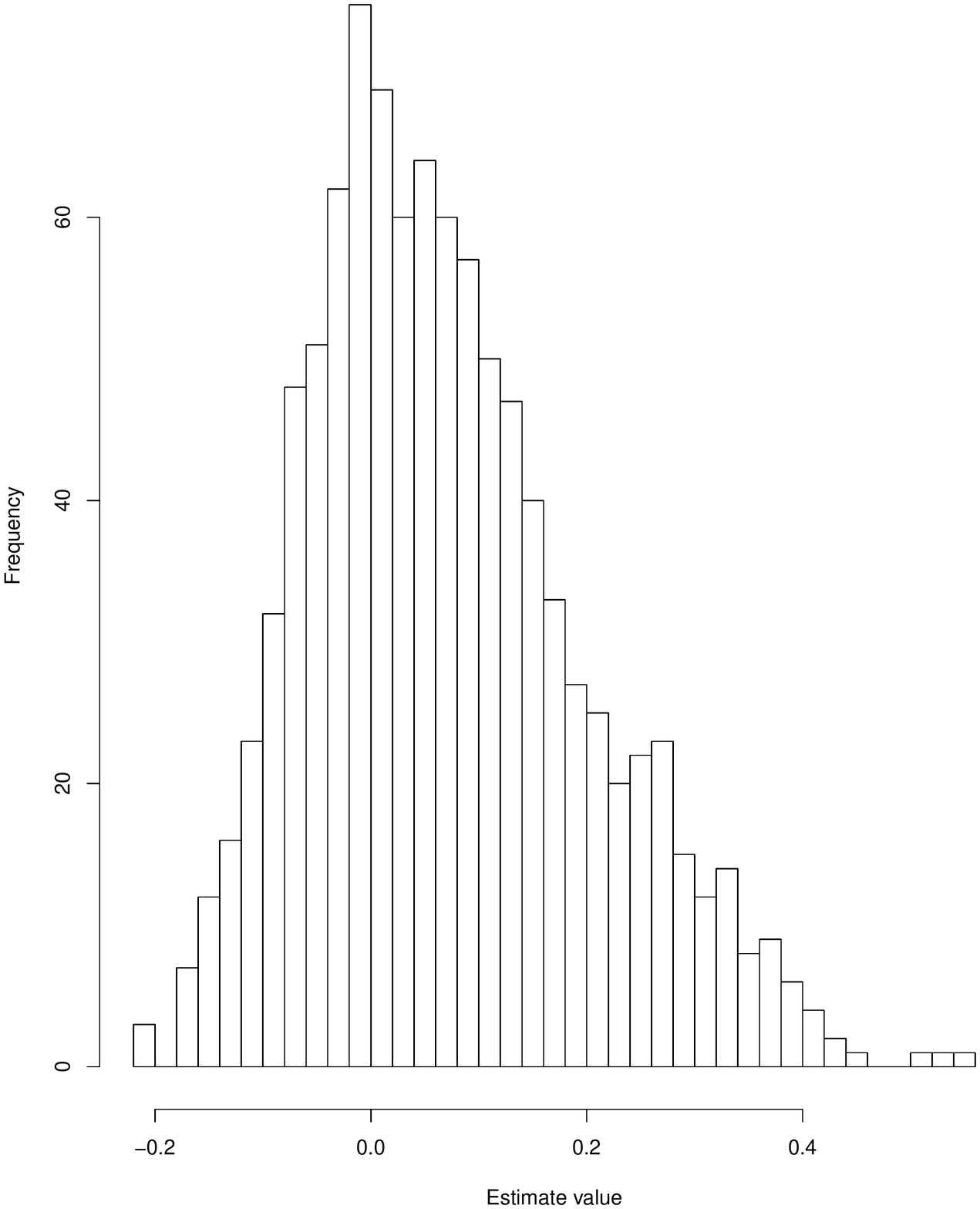}
    \caption{Estimates of $\alpha_1$.}
   \label{fig:pois-N=100-alpha1}
 \end{subfigure}
 \begin{subfigure}[t]{0.32\textwidth}
    \includegraphics[width=\textwidth]{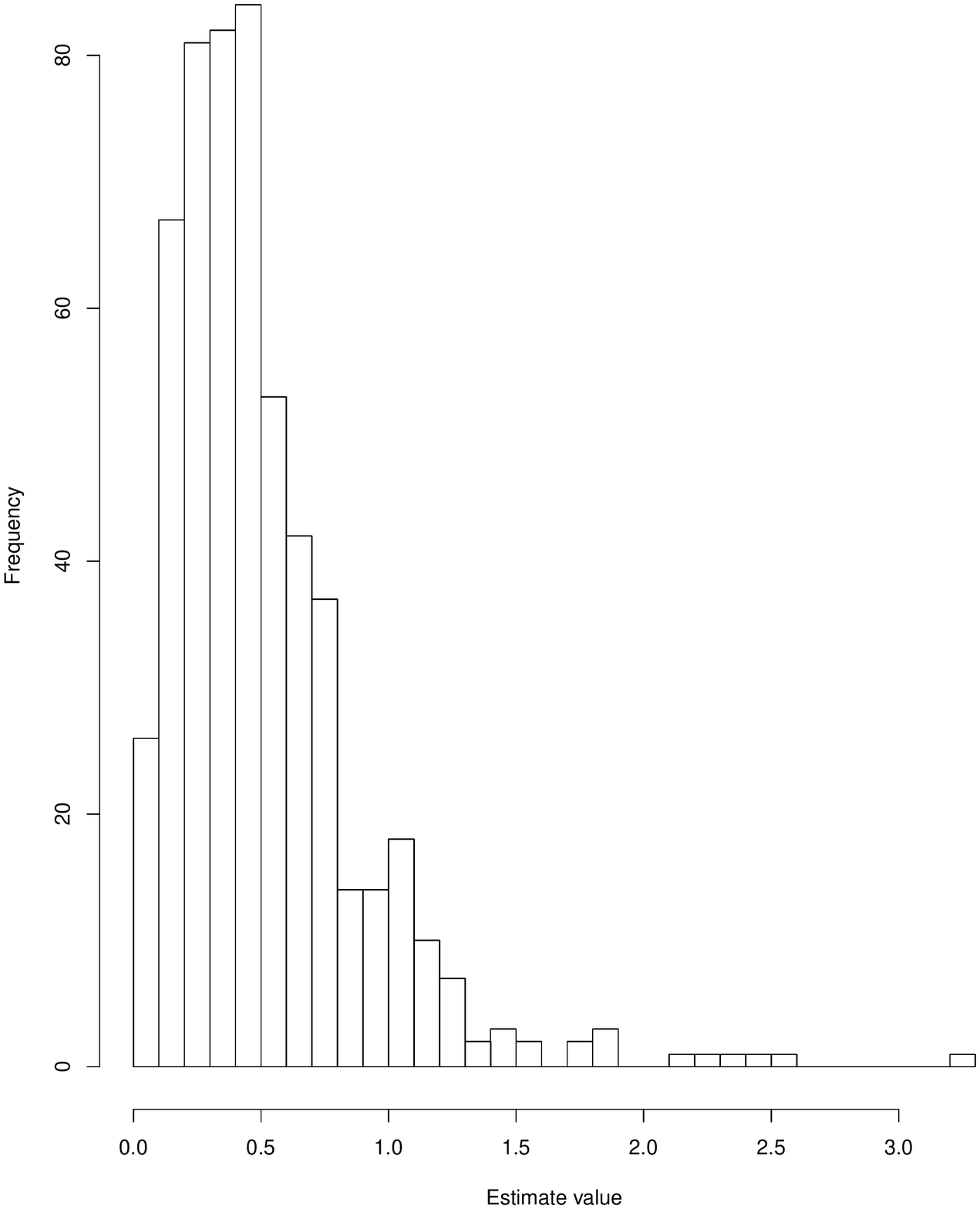}
    \caption{Estimates of $l_1$.}
   \label{fig:pois-N=100-l1}
 \end{subfigure}
\caption{Compensated Poisson liquidity with $\lambda=1$ and $N=100$.}
\label{fig:pois-N=100}
\end{figure}

\begin{figure}[H]
\centering
  \begin{subfigure}[t]{0.32\textwidth}
   \includegraphics[width=\textwidth]{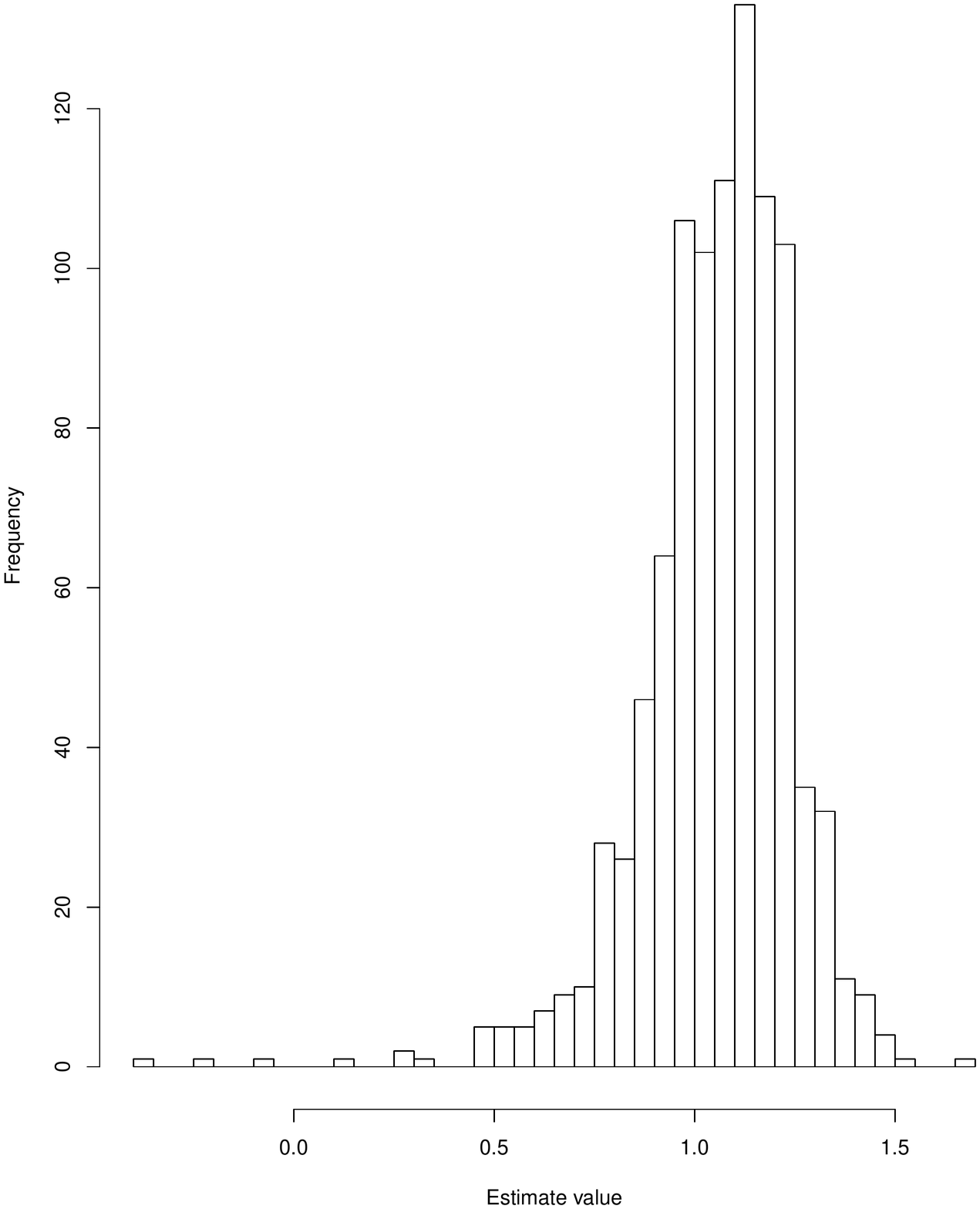}
    \caption{Estimates of $\alpha_0$.}
    \label{fig:pois-N=1000-alpha0}
  \end{subfigure}
 \begin{subfigure}[t]{0.32\textwidth}
    \includegraphics[width=\textwidth]{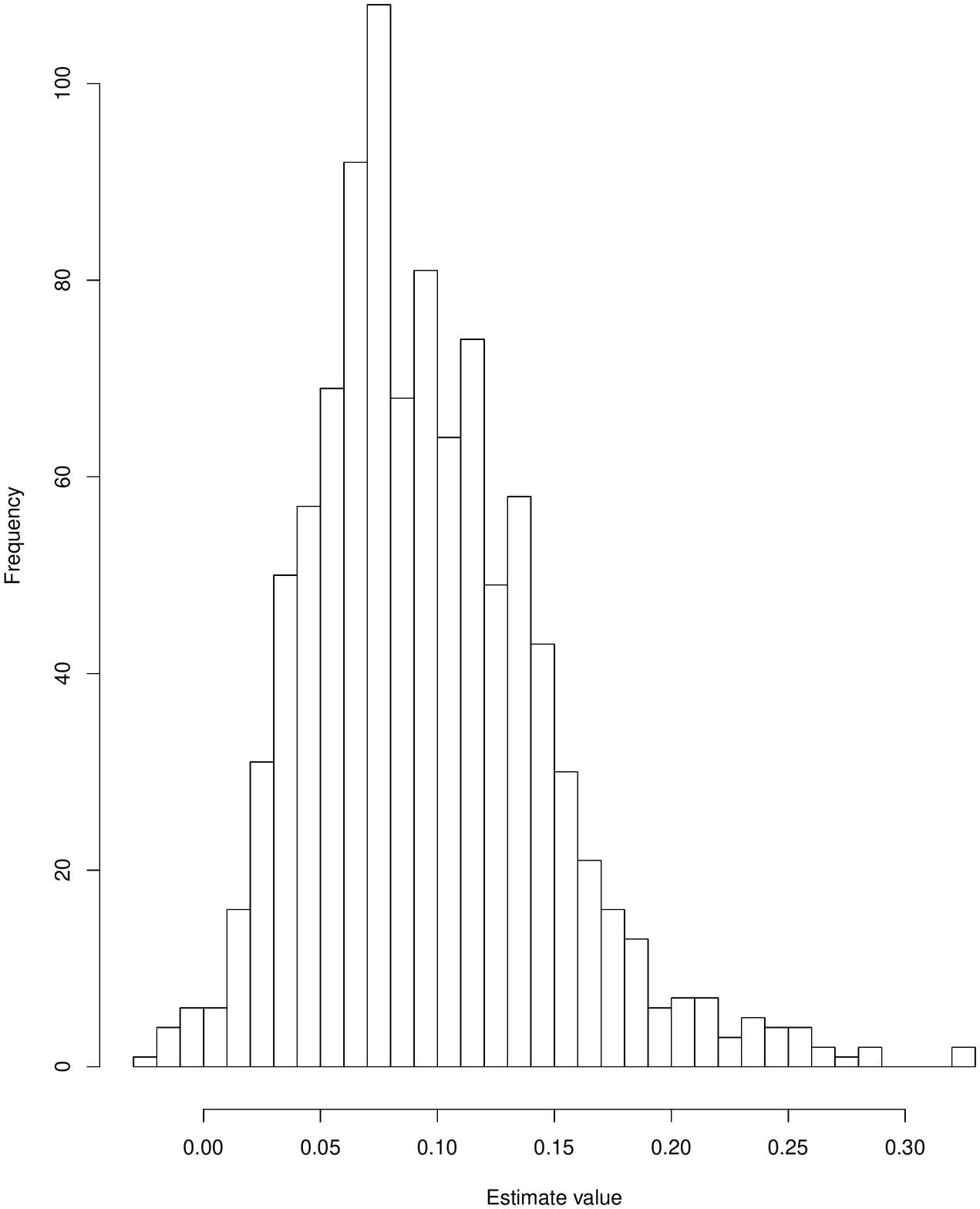}
    \caption{Estimates of $\alpha_1$.}
   \label{fig:pois-N=1000-alpha1}
 \end{subfigure}
 \begin{subfigure}[t]{0.32\textwidth}
    \includegraphics[width=\textwidth]{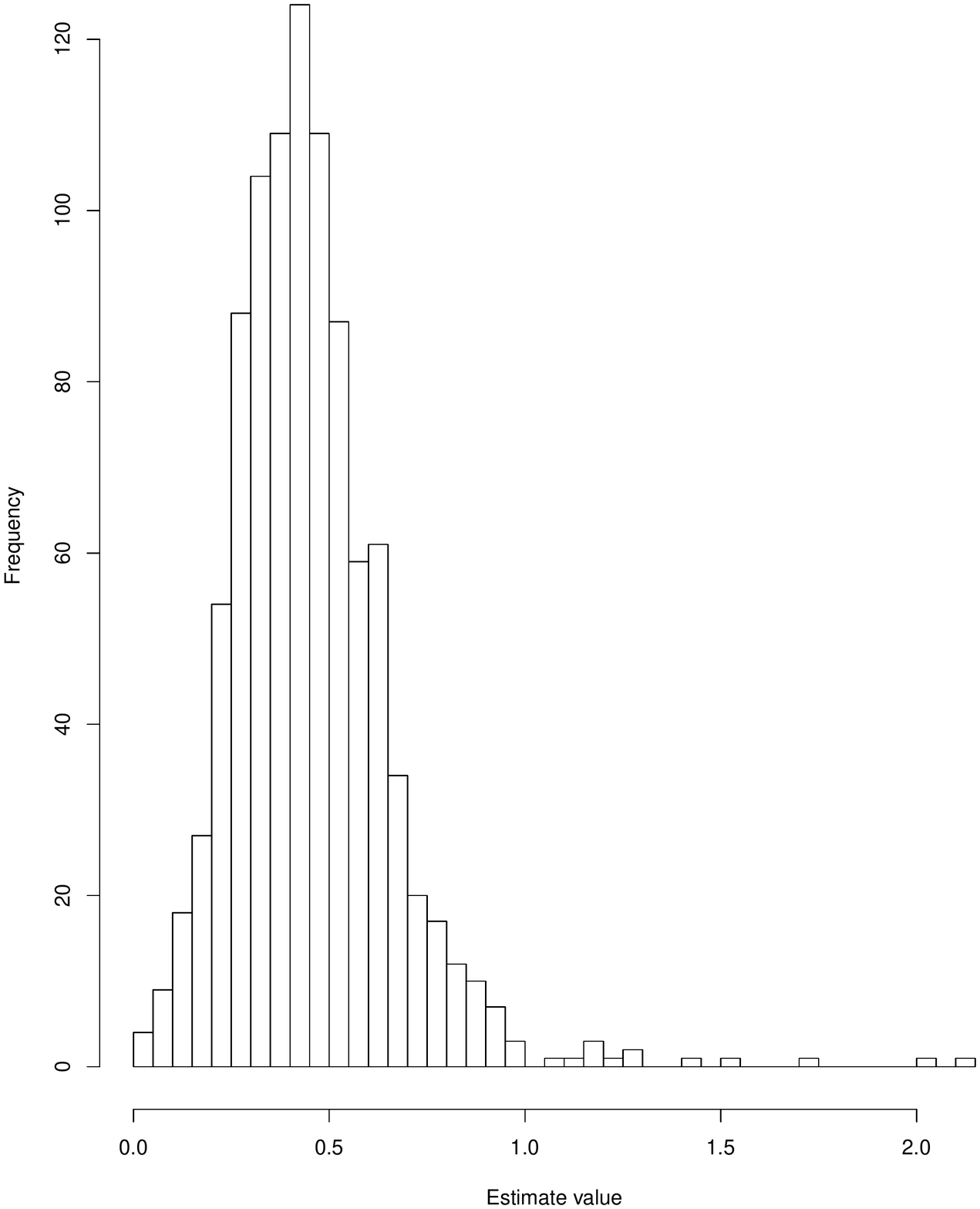}
    \caption{Estimates of $l_1$.}
   \label{fig:pois-N=1000-l1}
 \end{subfigure}
\caption{Compensated Poisson liquidity with $\lambda=1$ and $N=1000$.}
\label{fig:pois-N=1000}
\end{figure}

\begin{figure}[H]
\centering
  \begin{subfigure}[t]{0.32\textwidth}
   \includegraphics[width=\textwidth]{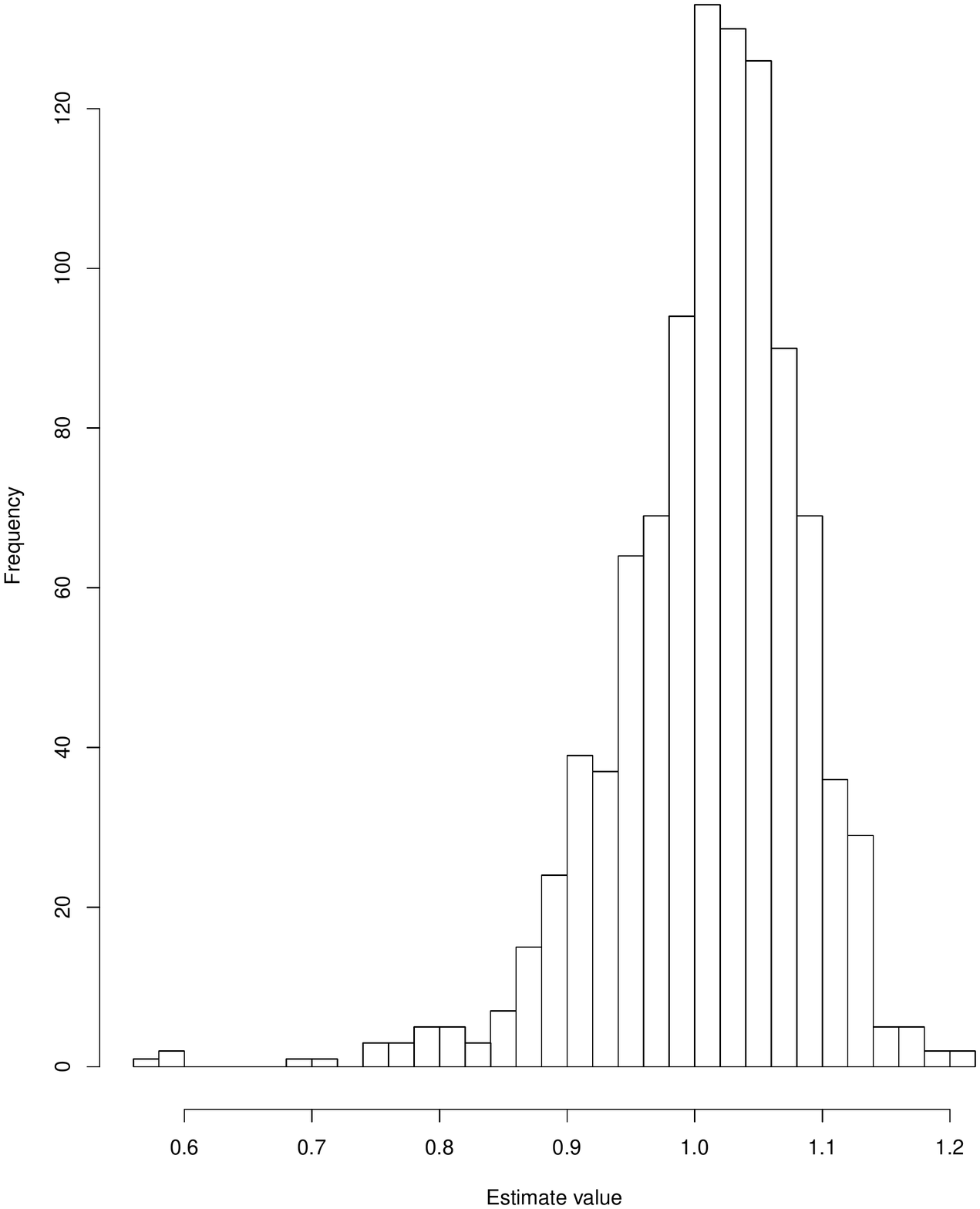}
    \caption{Estimates of $\alpha_0$.}
    \label{fig:pois-N=10000-alpha0}
  \end{subfigure}
 \begin{subfigure}[t]{0.32\textwidth}
    \includegraphics[width=\textwidth]{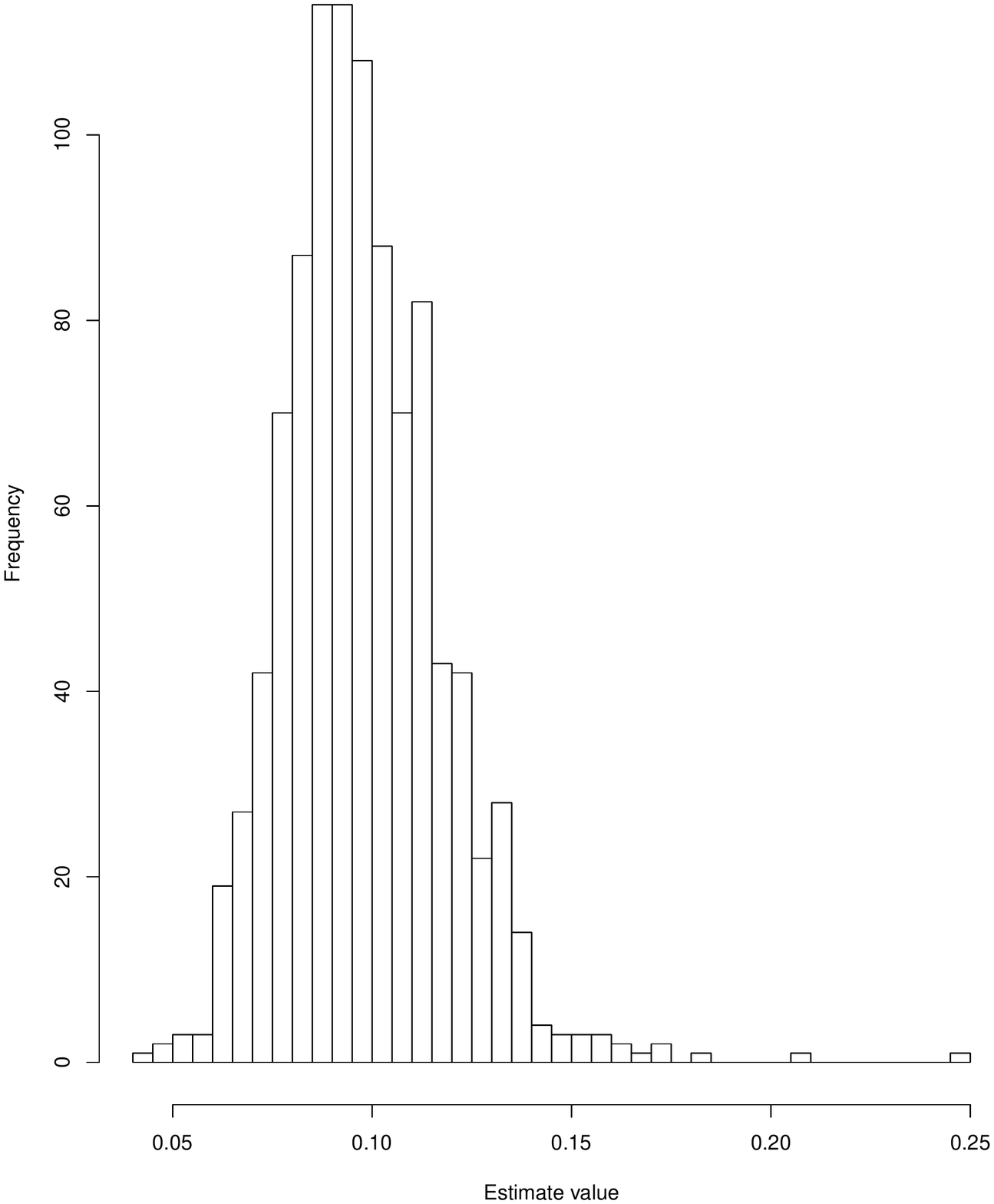}
    \caption{Estimates of $\alpha_1$.}
   \label{fig:pois-N=10000-alpha1}
 \end{subfigure}
 \begin{subfigure}[t]{0.32\textwidth}
    \includegraphics[width=\textwidth]{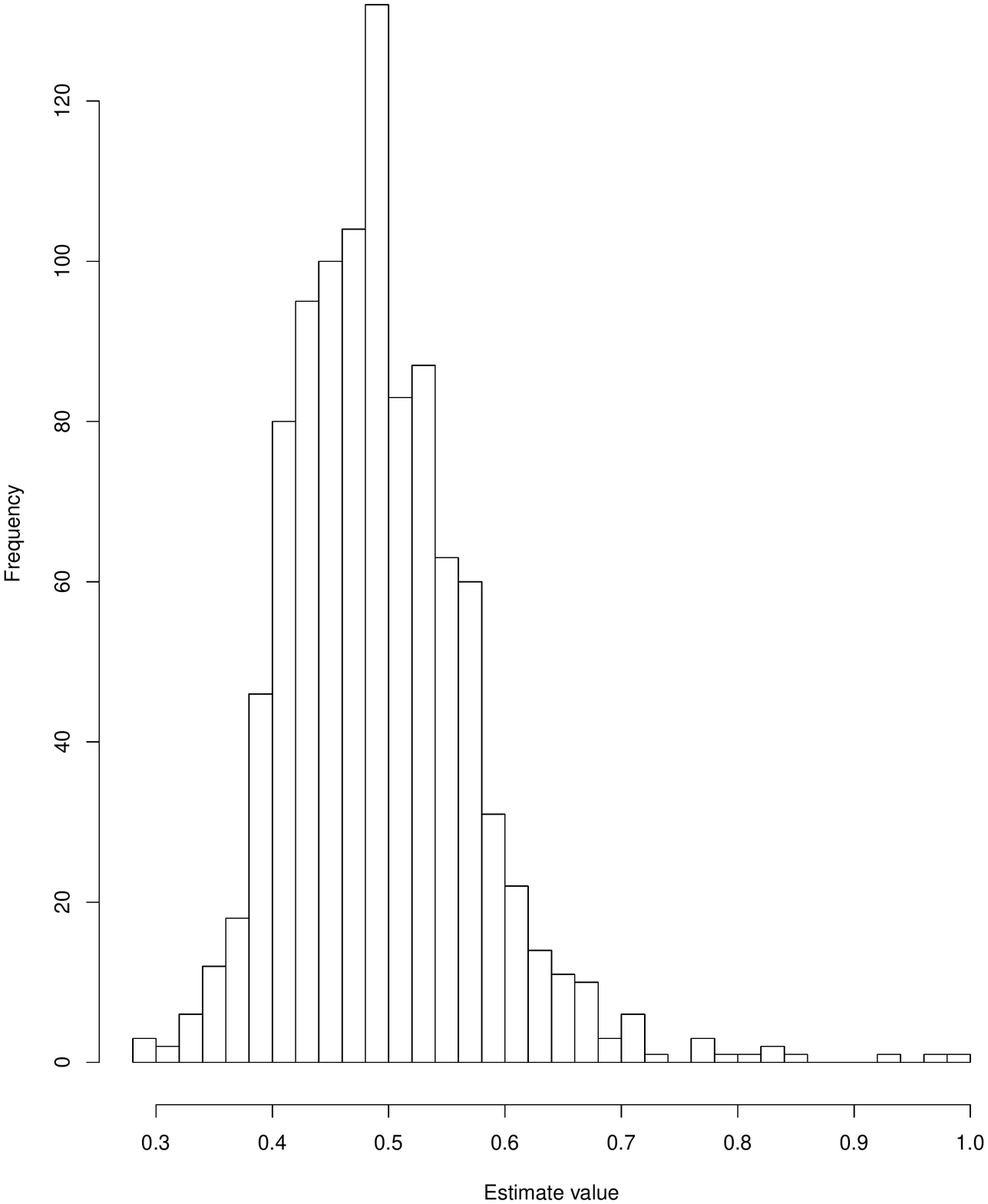}
    \caption{Estimates of $l_1$.}
   \label{fig:pois-N=10000-l1}
 \end{subfigure}
\caption{Compensated Poisson liquidity with $\lambda=1$ and $N=10000$.}
\label{fig:pois-N=10000}
\end{figure}

\begin{figure}[H]
\centering
  \begin{subfigure}[t]{0.32\textwidth}
   \includegraphics[width=\textwidth]{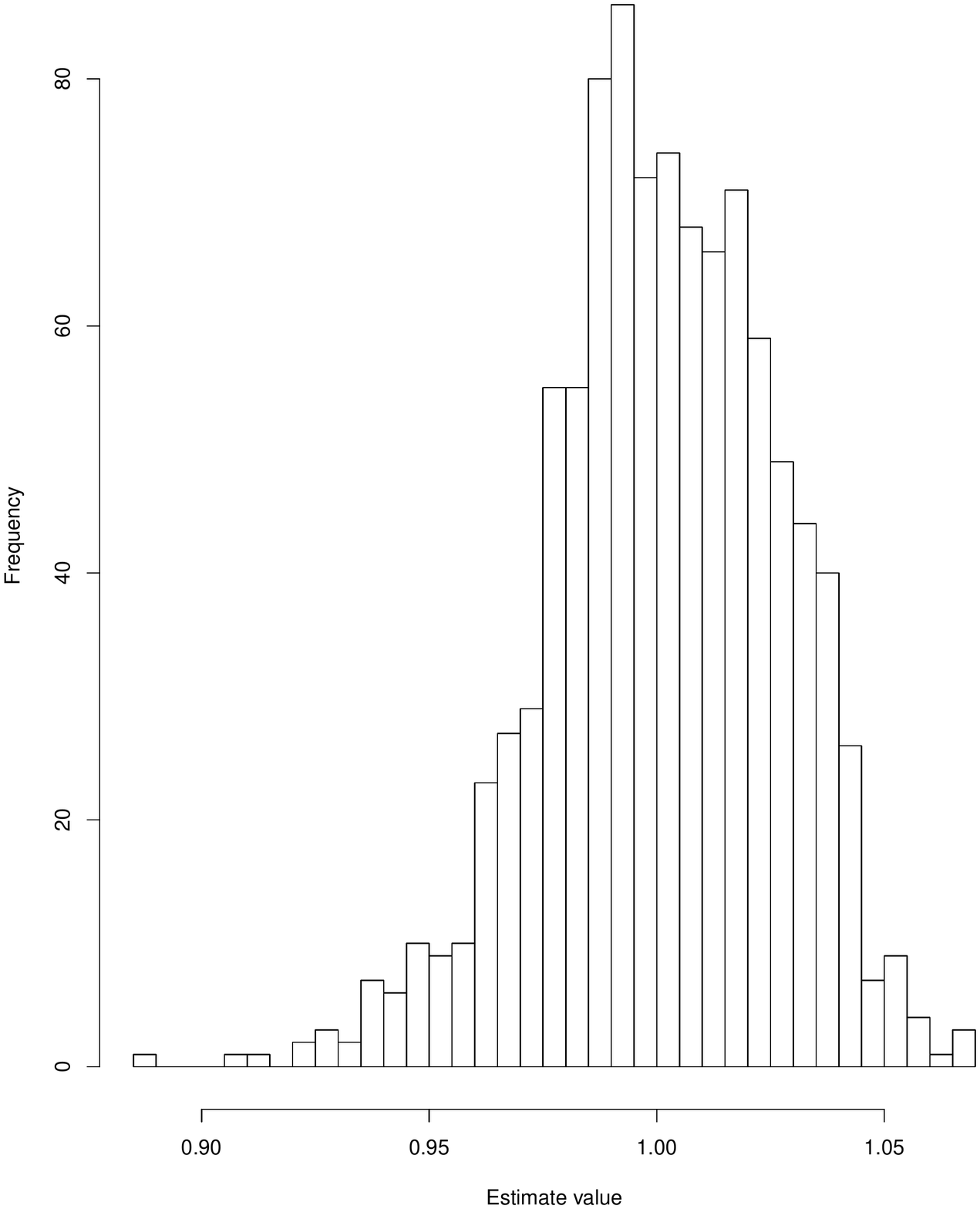}
    \caption{Estimates of $\alpha_0$.}
    \label{fig:pois-N=100000-alpha0}
  \end{subfigure}
 \begin{subfigure}[t]{0.32\textwidth}
    \includegraphics[width=\textwidth]{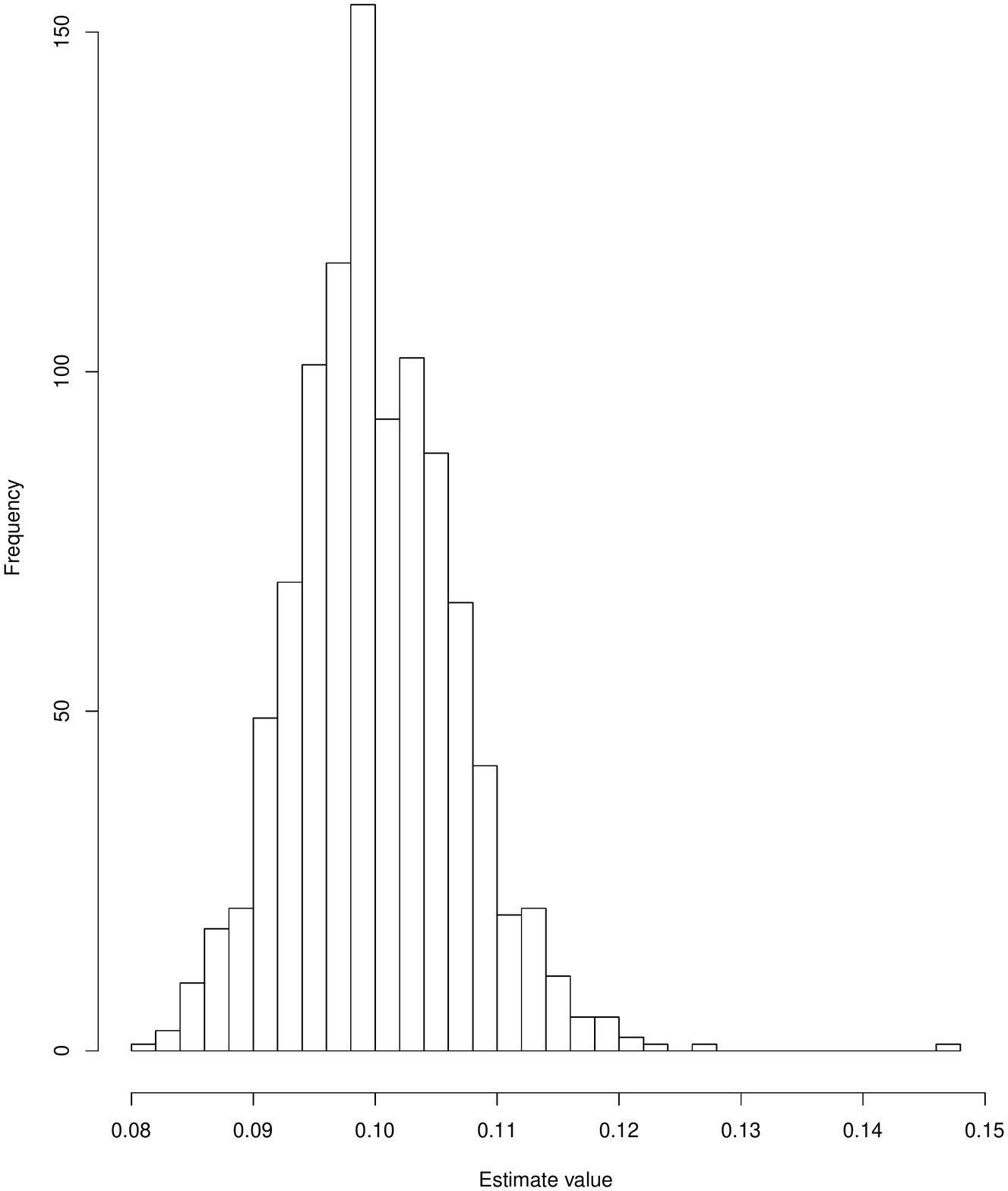}
    \caption{Estimates of $\alpha_1$.}
   \label{fig:pois-N=100000-alpha1}
 \end{subfigure}
 \begin{subfigure}[t]{0.32\textwidth}
    \includegraphics[width=\textwidth]{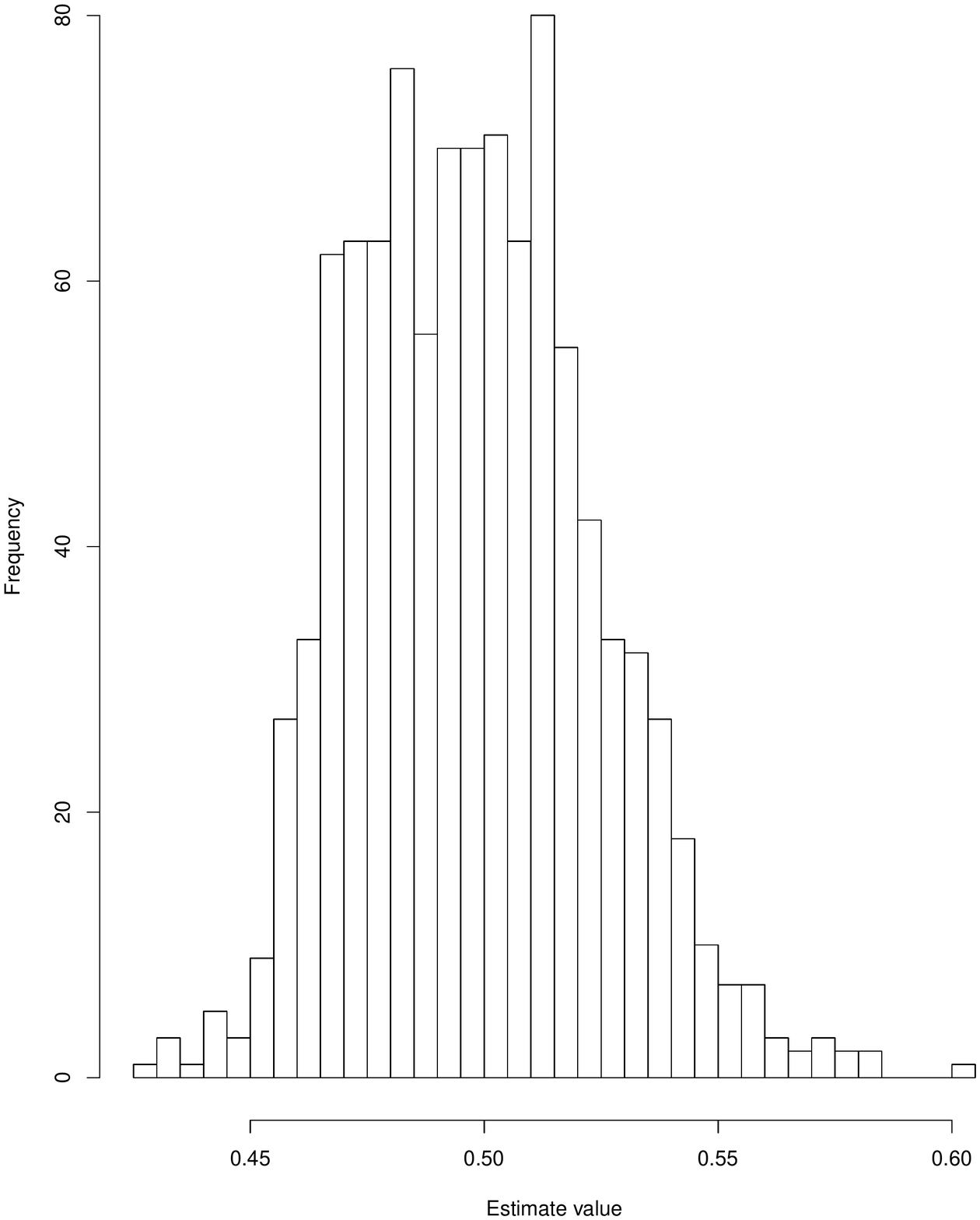}
    \caption{Estimates of $l_1$.}
   \label{fig:pois-N=100000-l1}
 \end{subfigure}
\caption{Compensated Poisson liquidity with $\lambda=1$ and $N=100000$.}
\label{fig:pois-N=100000}
\end{figure}

\appendix
\section{Tables}
\label{sec:A_tables}
In the following tables we have presented means and standard deviations of the estimates in different cases. In addition, we have provided tables demonstrating how the estimates match their theoretical intervals $0 \leq \alpha_0$, $0< \alpha_1 < \frac{1}{105^ \frac{1}{4}}$ and $0 < l_1$. We can see that multiplying the mean squared error (RMSE) provided by Tables 1-4 with $N^H$, the power $H$ of the sample size, gives us evidence of the convergence rates of the estimators.

\begin{table}[H]
\centering
\begin{tabular}{r|ccc}
  
  $N$ & $\alpha_0$ & $\alpha_1$ & $l_1$  \\
  \hline

100 & 1.063 (0.374) & 0.074 (0.124) & 0.541 (0.346)  \\
1000& 1.037 (0.169) & 0.100 (0.051) & 0.465 (0.167)   \\
10000 & 1.007 (0.061) & 0.100 (0.018) & 0.494 (0.060)  \\ 
100000 & 1.001 (0.019) &  0.100 (0.005) & 0.499 (0.019) \\
   \hline
\end{tabular}
\caption{Table of means and standard deviations corresponding to fractional Brownian motion liquidity with $H=\frac{1}{3}$.}
\end{table}

\begin{table}[H]
\centering
\begin{tabular}{r|ccc}
  
  $N$ & $\alpha_0$ & $\alpha_1$ & $l_1$  \\
  \hline

100 & 1.086 (0.359) & 0.080 (0.127) & 0.538 (0.358)  \\
1000& 1.029 (0.181) & 0.097 (0.052) & 0.479 (0.184)   \\
10000 & 1.005 (0.057) & 0.099 (0.017) & 0.497 (0.059)  \\ 
100000 & 1.000 (0.019) &  0.100 (0.005) & 0.500 (0.019) \\
   \hline
\end{tabular}
\caption{Table of means and standard deviations corresponding to fractional Brownian motion liquidity with $H=\frac{2}{3}$.}
\end{table}

\begin{table}[H]
\centering
\begin{tabular}{r|ccc}
  
  $N$ & $\alpha_0$ & $\alpha_1$ & $l_1$  \\
  \hline

100 & 1.068 (0.322) & 0.090 (0.139) & 0.535 (0.336)  \\
1000& 1.042 (0.163) & 0.098 (0.052) & 0.467 (0.180)   \\
10000 & 1.009 (0.059) & 0.099 (0.018) & 0.491 (0.064)  \\ 
100000 & 1.001 (0.020) &  0.100 (0.006) & 0.499 (0.022) \\
   \hline
\end{tabular}
\caption{Table of means and standard deviations corresponding to fractional Brownian motion liquidity with $H=\frac{4}{5}$.}
\end{table}

\begin{table}[H]
\centering
\begin{tabular}{r|ccc}
  
  $N$ & $\alpha_0$ & $\alpha_1$ & $l_1$  \\
  \hline

100 & 1.110 (0.346) & 0.071 (0.128) & 0.508 (0.384)  \\
1000& 1.057 (0.190) & 0.095 (0.050) & 0.452 (0.207)   \\
10000 & 1.011 (0.075) & 0.098 (0.020) & 0.493 (0.081)  \\ 
100000 & 1.001 (0.025) &  0.100 (0.007) & 0.498 (0.026) \\
   \hline
\end{tabular}
\caption{Table of means and standard deviations corresponding to compensated Poisson liquidity with $\lambda=1$.}
\end{table}

%%%%%%%%%%%%%%%%%%%%%%%%%%%%%%%%%%%%%%%%

\begin{table}[H]
\centering
\begin{tabular}{r|ccc}
  
  $N$ & $\alpha_0$ & $\alpha_1$ & $l_1$  \\
  \hline

100 &  55.1&  65.4 &55.8  \\
1000&  96.5& 98.8 &96.5    \\
10000 & 100 & 100 & 100  \\ 
100000 &  100& 100  & 100 \\
   \hline
\end{tabular}
\caption{Table of percentages of the estimates lying on their theoretical intervals corresponding to fBm liquidity with $H=\frac{1}{3}$.}
\end{table}

\begin{table}[H]
\centering
\begin{tabular}{r|ccc}
  
  $N$ & $\alpha_0$ & $\alpha_1$ & $l_1$  \\
  \hline

100 &  54.0& 66.6 &54.5   \\
1000&  97.1& 99.0 & 97.1  \\
10000 &  100&100 &100   \\ 
100000 & 100 & 100& 100 \\
   \hline
\end{tabular}
\caption{Table of percentages of the estimates lying on their theoretical intervals corresponding to fBm liquidity with $H=\frac{2}{3}$.}
\end{table}

\begin{table}[H]
\centering
\begin{tabular}{r|ccc}
  
  $N$ & $\alpha_0$ & $\alpha_1$ & $l_1$  \\
  \hline

100 &  52.0& 65.2 & 52.1  \\
1000& 95.7& 98.5 & 95.7   \\
10000 &  100&100&100   \\ 
100000 & 100 & 100  & 100 \\
   \hline
\end{tabular}
\caption{Table of percentages of the estimates lying on their theoretical intervals corresponding to fBm liquidity with $H=\frac{4}{5}$.}
\end{table}

\begin{table}[H]
\centering
\begin{tabular}{r|ccc}
  
  $N$ & $\alpha_0$ & $\alpha_1$ & $l_1$  \\
  \hline

100 & 54.9& 61.8& 55.3  \\
1000& 96.6& 98.7 & 96.9  \\
10000 & 100 & 100& 100 \\ 
100000 & 100 & 100 & 100 \\
   \hline
\end{tabular}
\caption{Table of percentages of the estimates lying on their theoretical intervals corresponding to compensated Poisson liquidity with $\lambda = 1$.}
\end{table}

\end{document}